\definecolor{refkey}{gray}{.5}   % graylevel for refs
\definecolor{labelkey}{gray}{.5} % graylevel for labels
\newtheorem{thm}{Theorem}[section]
\newtheorem{prop}[thm]{Proposition}
\newtheorem{lem}[thm]{Lemma}
\newtheorem{cor}[thm]{Corollary}
\newtheorem*{thmS}{Straightening Theorem}
\theoremstyle{definition}
\newtheorem{dfn}[thm]{Definition}
\def\0{\emptyset}
  \def\Cc{\mathcal{C}}
 \def\Gc{\mathcal{G}}
\def\Fc{\mathcal{F}}   \def\Mc{\mathcal{M}}
\def\Pc{\mathcal{P}} \def\Rc{\mathcal{R}}  
\def\Uc{\mathcal{U}} \def\Vc{\mathcal{V}}  \def\Wc{\mathcal{W}}
  \def\Li{\mathcal{LI}} \def\chai{{\mathcal{CH}}}
\def\Qf{\mathfrak{Q}}
\def\Uf{\mathfrak{U}} \def\Vf{\mathfrak{V}}
\def\Z{\mathbb{Z}}
\def\C{\mathbb{C}}
\def\R{\mathbb{R}}
\renewcommand\emptyset{\varnothing}
\newcommand{\sm}{\setminus}
\def\ba{\mathbf{a}}
\def\bx{\mathbf{x}}
\def\bA{\mathbf{A}}
\def\bX{\mathbf{X}}
\def\eps{\varepsilon}
\def\ol{\overline}
\def\si{\sigma}  \def\ta{\theta}  
\def\al{\alpha}  \def\be{\beta}  \def\la{\lambda} \def\ga{\gamma}
\def\om{\omega}
\def\vp{\varphi}
\def\le{\leqslant}
\def\ge{\geqslant}
\def\wt{\widetilde}
\def\wh{\widehat}
\def\en{\mathrm{end}}
\def\thr{\mathrm{thr}}
\def\imr{\mathrm{ImR}}
\def\inn{\mathrm{Int}}
\def\cuc{\mathcal{CU}}
\def\ch{\mathrm{CH}}
\def\uc{\mathbb{S}^1}
\def\bd{\mathrm{Bd}}
\def\disk{\mathbb{D}}
\def\<{\langle}
\def\>{\rangle}
\def\phd{\mathrm{PHD}}
\def\cdisk{\overline{\mathbb{D}}}
\def\thu{\mathrm{Th}}
\def\cont{\mathrm{Shad}}
\begin{document}
\date{February 20, 2021}

\title[Immediate renormalization]
{Immediate renormalization of complex polynomials}

\author[A.~Blokh]{Alexander~Blokh}

\begin{comment}
\thanks{The first named author was partially
supported by NSF grant DMS--0901038}
\end{comment}

\author[L.~Oversteegen]{Lex Oversteegen}

\begin{comment}
\thanks{The second named author was partially  supported
by NSF grant DMS-0906316}
\end{comment}

\author[V.~Timorin]{Vladlen~Timorin}

\thanks{The study has been funded by the Russian Academic Excellence Project `5-100'.}

\address[Alexander~Blokh and Lex~Oversteegen]
{Department of Mathematics\\ University of Alabama at Birmingham\\
Birmingham, AL 35294-1170}

\address[Vladlen~Timorin]
{Faculty of Mathematics\\
HSE University\\
6 Usacheva St., 119048 Moscow, Russia
}

\address[Vladlen~Timorin]
{Independent University of Moscow\\
Bolshoy Vlasyevskiy Pereulok 11, 119002 Moscow, Russia}

\email[Alexander~Blokh]{ablokh@math.uab.edu}
\email[Lex~Oversteegen]{overstee@math.uab.edu}
%\email[Ross~Ptacek]{rptacek@uab.edu}
\email[Vladlen~Timorin]{vtimorin@hse.ru}

\subjclass[2010]{Primary 37F20; Secondary 37C25, 37F10, 37F50}

\keywords{Complex dynamics; Julia set; Mandelbrot set}

%\definecolor{Red}{rgb}{1,0,0}

%\begin{comment}
\begin{abstract}
A cubic polynomial $P$ with a non-repelling fixed point $b$
is said to be \emph{immediately renormalizable} if there exists
a (connected) quadratic-like invariant filled Julia set $K^*$ such that
$b\in K^*$. In that case exactly one critical point of $P$ does not
belong to $K^*$. We show that if, in addition, the Julia set of $P$ has no 
(pre)periodic cutpoints then this critical point is recurrent.  

\end{abstract}
%\end{comment}

\maketitle

\section{Introduction}

In the introduction we assume knowledge of basics of complex dynamics.

We study polynomials $P$ with connected Julia sets $J(P)$. An (external)
ray with a rational argument always lands at a point that is eventually
mapped to a repelling or parabolic periodic point. If two
external rays like that land at a point $x\in J(P)$, then
such rays are said to form a \emph{rational cut (at $x$)}. The family
of all rational cuts of a polynomial $P$ may be empty (then one says
that the \emph{rational lamination} of $P$ is empty); if it is
non-empty it provides a combinatorial tool allowing one to study
properties of $P$ even in the presence of such complicated irrational
phenomena as Cremer or Siegel periodic points.

Consider quadratic polynomials with connected Julia set. It is known
that any quadratic polynomials $Q\notin \ol{\phd_2}$ has
rational cuts ($\phd_2$ is the \emph{Principal Hyperbolic Domain} of the
Mandelbrot set). Thus, any %efficient path
arc from $\ol{\phd_2}$ towards the rest of the Mandelbrot set consists of polynomials with
rational cuts.

The purpose of this paper is to investigate a similar phenomenon in the
cubic case. Then there is a ``gray area'' $\Gc$ in-between the set of
cubic polynomials with rational cuts, and the set $\ol{\phd_3}$, the \emph{Principal
Hyperbolic Domain} of the cubic connectedness locus. We
conjecture that $\Gc$ is empty. Thus, the set $\Gc$ is the true object
of our study even though, paradoxically, in the end we want to
establish that it is empty. As a step in this direction we
prove that a polynomial from $\Gc$ must have specific
properties. Our approach is not unusual: according to the nature of the
contrapositive argument one studies a phenomenon in great detail only
to discover that an elaborate list of its properties leads to
contradictions thus disproving its existence.

\begin{dfn}[\cite{DH-pl}]\label{d:ql1}
A \emph{polynomial-like} map is a proper holomorphic map $f: U\to f(U)$
of degree $k>1$, where $U$, $f(U)\subset\C$ are open Jordan disks and
$\ol{U}\subset f(U)$. The \emph{filled Julia set} $K(f)$ of $f$ is the
set of points in $U$ that never leave $U$ under iteration of $f$. The
\emph{Julia set} $J(f)$ of $f$ is the boundary of $K(f)$. Call $U$ a
\emph{basic neighborhood} of $K(f)$ and assume that if $f$ is given,
then its basic neighborhood is fixed.
If $k=2$, then the corresponding polynomial-like maps are said to be
\emph{quadratic-like}.
\end{dfn}

We can now state our main result.

\begin{thm} \label{t:recur}
Let $f$ be a cubic polynomi\-al with empty rational lamination that has
a quadratic-like restriction with a connected quadratic-like filled Julia set $K^*(f)=K^*$. Then the critical point of
$f$ that does not belong to $K^*$ is recurrent.
\end{thm}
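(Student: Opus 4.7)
The plan is to argue by contradiction: suppose the critical point $c \notin K^*$ is not recurrent, so $c \notin \omega(c)$, and derive the existence of a rational cut, contradicting the hypothesis that the rational lamination of $f$ is empty.

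I would first dispose of the subcase in which the forward orbit of $c$ eventually enters $K^*$: if $f^n(c) \in K^*$ for some $n \ge 0$, then $\omega(c) \subset K^*$ and $c \notin \omega(c)$ is automatic. In this subcase I would exploit the fact that $f|_{K^*}$ is quadratic-like with its own single critical point, so the additional orbit $\{f^k(c)\}_{k \ge n}$ lying inside $K^*$ has to be compatible with the quadratic-like combinatorics. Using the connectedness of $K^*$ and the fact that there are no (pre)periodic cutpoints of $J(f)$ (a consequence of the empty rational lamination), one expects a puzzle/tableau argument forcing the orbit of $c$ in $K^*$ to produce rays co-landing on $\partial K^* \subset J(f)$, a contradiction.

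In the main case, $f^n(c) \notin K^*$ for all $n \ge 0$, so $\omega(c)$ is a compact forward-invariant subset of $J(f)$ that contains no critical point of $f$: the critical point inside $K^*$ is separated from the orbit of $c$, and $c$ itself is excluded by non-recurrence. Under the empty rational lamination assumption, indifferent periodic cycles are severely constrained, so a Mañé-type theorem applies to give a neighborhood $V \supset \omega(c)$ on which some iterate of $f$ is uniformly expanding; in particular $\omega(c)$ is a hyperbolic set and contains repelling periodic points densely. Pick such a repelling periodic point $p \in \omega(c)$.

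The crucial step is then to show that $p$ receives at least two distinct external rays of $f$. The orbit of $c$ accumulates on $p$ from outside $K^*$, so pulling back a small disk around $p$ along the hyperbolic branch structure yields a sequence of ``returns'' of $c$'s orbit to arbitrarily small neighborhoods of $p$ from the complement of $K^*$. On the other hand, the immediate renormalization structure pins down a ray landing at $p$ through $\partial K^*$ from the side of the quadratic-like filled Julia set (the non-repelling fixed point $b \in K^*$ rigidifies the exterior combinatorics). Combining these two accesses produces two distinct rays co-landing at $p$ (or at a common iterated preimage), which is a rational cut and contradicts the empty rational lamination.

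The main obstacle I expect is precisely this last geometric step: promoting the dynamical fact that $c$'s orbit accumulates on $p$ from outside $K^*$ into the combinatorial fact that a second external ray lands at $p$. This typically requires a careful shadowing argument for the pullbacks of a puzzle neighborhood of $p$ along the orbit of $c$, together with a delicate use of the fact that $K^*$ is an \emph{immediate} renormalization containing a non-repelling fixed point, so the exterior of $K^*$ has controlled combinatorics. Handling the possibility that $\omega(c)$ meets $\partial K^*$, and that the hyperbolic expansion near $\omega(c)$ degenerates near $\partial K^*$, is the technical heart of the argument.
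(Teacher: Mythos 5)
Your overall strategy (non-recurrence of $\om_2$ $\Rightarrow$ produce a rational cut $\Rightarrow$ contradiction) matches the paper's, and your instinct to invoke Ma\~n\'e's theorem is also present in the paper, but the two steps on which your plan actually rests both have genuine gaps. First, the hyperbolicity step fails: you apply Ma\~n\'e (Theorem \ref{t:mane}) to a neighborhood of all of $\omega(\om_2)$, claiming $\omega(\om_2)$ avoids critical points and their limit sets. But nothing separates $\omega(\om_2)$ from $K^*$: decorations accumulate on $K^*$, and in the essential case (the non-repelling fixed point being Cremer or Siegel) the other critical point $\om_1\in K^*$ is typically recurrent and the orbit of $\om_2$ is expected to accumulate on $K^*$, hence on $\omega(\om_1)$. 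Then no Ma\~n\'e neighborhood covers $\omega(\om_2)$, there is no uniform expansion, and the claimed density of repelling periodic points in $\omega(\om_2)$ (which would not follow from expansion alone anyway --- an expanding $\omega$-limit set can be minimal and aperiodic) collapses. You flag ``degeneration near $\partial K^*$'' as a technical point, but it is fatal to the scheme as proposed. The paper's use of Ma\~n\'e is different and legitimate: it is applied only to sets away from $K^*$ (pullbacks of sectors adjacent to $K^*$, and a neighborhood of $\wt K^*$), where non-recurrence of $\om_2$ is exactly the needed hypothesis.

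Second, the ``crucial step'' --- converting accumulation of the orbit of $\om_2$ on a point $p$ from outside $K^*$ into a second external ray landing at $p$ --- has no mechanism in your sketch, and this is precisely the heart of the theorem. The orbit of $\om_2$ lies in the filled Julia set, so it provides no external access, and accumulation of decorations on a boundary point of $K^*$ does not make any ray land there (indeed, with empty rational lamination every repelling periodic point receives exactly one ray, so the whole difficulty is manufacturing the second one). The paper produces the rational cut by a completely different route: the decoration/sector machinery with quadratic arguments plus the Ma\~n\'e-based backward stability shows every decoration eventually maps to the critical one, whence the invariant quadratic gap $\Uf$ is of \emph{periodic type} (Lemma \ref{l:perio-type}); then the mother-hedgehog results of Childers and Dudko--Lyubich (Theorems \ref{t:chi} and \ref{t:md}), via the chain/thread analysis and Theorem \ref{t:allc}, give degenerate impressions at periodic points of $K^*$ other than the fixed point, forcing the two periodic $K$-rays at the endpoints of the major of $\Uf$ to land at one periodic point --- the desired rational cut. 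Your proposal never engages with the Cremer/Siegel case, which is exactly where an input of the strength of Dudko--Lyubich is needed; without it (or a substitute), neither your ``shadowing'' suggestion nor the vague puzzle argument in your first subcase yields co-landing rays.
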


In the situation of Theorem \ref{t:recur} we will always denote a connected quadratic-like filled Julia set
by $K^*$; also, we will fix its neighborhood $U^*$ on which $f$ is quadratic-like and denote $f|_{U^*}$ by $f^*$.

\begin{comment}

ADD COROLLARIES CONCERNING PARAMETER SLICES AND THE STRUCTURE OF THE
IMM RENORM JULIA SETS. E.G., IF THERE ARE A BIGGER AND A SMALLER FULL
CONTINUA THEN THE BIGGER ONE CAN BE VIEWED AS THE SMALLER PLUS
DECORATIONS. THE SMALLER ONE HAS ITS RIEMANN COUNTERPART. THUS,
DECORATIONS CAN BE ASSOCIATED WITH ROOT ARGUMENTS (OR RIEMANN
ARGUMENTS, OR JUST R-ARGUMENTS). IN THE JULIA CASE, AS WELL AS IN THE
CASE OF THE CUBIOID MINUS TOP HULL OF PHD, THOSE ARGUMENTS ARE

AS DECORATIONS, PROVE THAT THEIR QUADRATIC ARGUMENTS ARE WELL-DEFINED,
AND PROVE THAT AT NON-RECURRENT ARGUMENTS THERE ARE NO DECORATIONS.

\end{comment}

\section{Preliminaries: polynomial-like maps and cubic polynomials with empty rational lamination}\label{s:prelim}

By \emph{classes} of polynomials, we mean affine conjugacy classes. For
a polynomial $f$, let $[f]$ be its class, let $K(f)$ be its filled Julia set,
and let $J(f)$ be its Julia set. The \emph{connectedness locus $\Mc_d$ of degree $d$} is the
set of classes of degree $d$ polynomials whose critical points \emph{do
not escape} (i.e., have bounded orbits). Equivalently, $\Mc_d$ is the
set of classes of degree $d$ polynomials $f$ whose Julia set $J(f)$ is
connected. The connectedness locus $\Mc_2$ of degree $2$ is called the
\emph{Mandelbrot set}; the connectedness locus $\Mc_3$ of degree $3$ is
called the \emph{cubic connectedness locus}.  The \emph{principal
hyperbolic domain} $\phd_3$ of $\Mc_3$ is defined as the set of classes
of hyperbolic cubic polynomials whose Julia sets are Jordan curves.
Equivalently, $[f]\in\phd_3$ if both critical points of $f$ are
in the immediate basin attraction of the same (super-)attracting fixed point.
A polynomial is \emph{hyperbolic} if the orbits of all critical points converge
to (super-)attracting cycles.

\subsection{Polynomial-like maps}\label{ss:pl}

\begin{dfn}[\cite{DH-pl}]\label{d:ql2}
Two polynomial-like maps $f:U\to f(U)$ and $g:V\to g(V)$ of degree $k$ are said to be
\emph{hybrid equivalent} if there is a quasi-conformal map $\vp$ from a
neighborhood of $K(f)$ to a neighborhood of $K(g)$ conjugating $f$ to
$g$ in the sense that $g\circ\vp=\vp\circ f$ wherever both sides are
defined and such that $\ol\partial\vp=0$ almost everywhere on $K(f)$.
\end{dfn}

The terminology is explained by the following fundamental result.

\begin{thmS}[\cite{DH-pl}]
Let $f:U\to f(U)$ be a polynomial-like map. Then $f$ is hybrid
equivalent to a polynomial $P$ of the same degree. Moreover, if $K(f)$
is connected, then $P$ is unique up to $($a global$)$ conjugation by an
affine map.
\end{thmS}

We will need the next definition.

\begin{dfn}\label{d:plrays}
Let $f$ be a polynomial, and for some Jordan disk $U^*$ the map
$f^*=f|_{U^*}$ be polynomial-like. Let $g$ be a monic polynomial hybrid
equivalent to $f^*$. Then the corresponding filled Julia set
$K(f^*)=K^*$ of $f^*$ is called the \emph{polynomial-like filled invariant
Julia set}.
The curves in $\C\sm K(f^*)=K^*$ corresponding (through the
hybrid equivalence) to dynamic rays of $g$ are called
\emph{polynomial-like rays} of $f^*$. If the degree of $f^*$ is two,
then we will talk about \emph{quadratic-like rays}. Denote
polynomial-like rays $R^*(\be)$, where $\be$ is the argument of the
external ray of $g$ corresponding to $R^*(\be)$. We will also call them
\emph{$K^*$-rays} to distinguish them from rays external to $K(f)=K$
called \emph{$K$-rays}.
\end{dfn}

\begin{comment}

Here $K^*$-rays are defined in a bounded neighborhood of
$K^*$ while $K$-rays are defined on the entire plane.
We will use
the inverse $\psi^*: \C\sm K^*\to \C\sm \cdisk$ of the Riemann map as a
tool allowing us to transfer the dynamics of $f$ to the plane on which,
loosely, % speaking,
the role of $K^*$ is played by the unit circle while
the rest of the plane (i.e., the set $\C\sm K^*$) remains ``the same''.
Thus, the rest of $K$ (i.e.,  the set $K\sm K^*$) is
transferred to $\C\sm \cdisk$ and looks like a collection of pieces
``growing'' out of $\cdisk$. In terms of dynamics the map $P$ is transferred
by the map $\psi^*$ to a Jordan annulus $\wh {U^*}=\uc\cup \psi^*(U^*)$ to produce
the map $z^s: \wh {U^*}\to \uc\cup \psi^*(f(K^*))$ ($\uc\subset \C$ is the
unit circle centered at the origin) with the appropriate choice of $s$.

\end{comment}

Here $K^*$-rays are defined in a bounded neighborhood of
$K^*$ while $K$-rays are defined on the entire plane.
By Straightening Theorem
combined with well known results from complex dynamics
the composition $\psi^*: \C\sm K^*\to \C\sm \cdisk$ of the hybrid conjugacy for $K^*$ (see Straightening Theorem)
and the inverse Riemann map for $\C\sm K$ with real derivative at infinity
transfers the dynamics of $f$ to the plane on which,
loosely,
the role of $K^*$ is played by the unit circle while
the rest of the plane (i.e., the set $\C\sm K^*$) remains ``the same''.
Thus, the rest of $K$ (i.e.,  the set $K\sm K^*$) is
transferred to $\C\sm \cdisk$ and looks like a collection of pieces
``growing'' out of $\cdisk$. In terms of dynamics the map $P$ is transferred
by the map $\psi^*$ to a Jordan annulus $\wh {U^*}=\uc\cup \psi^*(U^*)$ to produce
the map $z^s: \wh {U^*}\to \uc\cup \psi^*(f(K^*))$ ($\uc\subset \C$ is the
unit circle centered at the origin) with the appropriate choice of $s$.

%This
%representation of the dynamics of $f|_U$ will be studied in detail in
%what follows.

Evidently, if $f$ is a polynomial of degree $d$ and $T\subsetneqq J(f)$
is a proper polynomial-like invariant Julia set then the degree of
$f|_T$ is less than $d$. In particular, if $f$ is a \emph{cubic}
polynomial and $K^*\subset K(f)$ is a polynomial-like filled invariant
Julia set, then the polynomial-like map $f|_{K^*}$ is quadratic-like. The
following lemma is proven in \cite{bot16} (it is based upon Theorem
5.11 from McMullen's book \cite{mcm94}).

\begin{lem}[Lemma 6.1 \cite{bot16}]\label{l:7.2}
Let $f$ be a complex cubic polynomial with a non-repelling fixed point
$a$. Then the quadratic-like filled invariant Julia set $K^*$ with $a\in
K^*$ (if any) is unique.
\end{lem}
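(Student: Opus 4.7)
The plan is to invoke Theorem~5.11 of McMullen's monograph \cite{mcm94}, which supplies a rigidity/disjointness dichotomy for polynomial-like filled Julia sets sitting inside the filled Julia set of a given holomorphic map. Informally, that result says that two distinct polynomial-like filled invariant Julia sets of the same map can intersect only at (pre)periodic repelling points; in particular, they cannot share a non-repelling periodic orbit, and they cannot contain one another properly (since a proper continuum of non-(pre)periodic points would otherwise be squeezed into the intersection).

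Accepting this, the deduction is short. Suppose $K^*_1$ and $K^*_2$ are two quadratic-like filled invariant Julia sets of the cubic $f$, each containing the non-repelling fixed point $a$. Since $f$ has degree $3$ and each $f|_{U^*_i}$ has degree $2$, each $K^*_i$ contains exactly one critical point of $f$ (the one lying in $U^*_i$). Now $a\in K^*_1\cap K^*_2$ is a periodic (in fact fixed) point that is non-repelling by hypothesis, so the only option left in the dichotomy is $K^*_1=K^*_2$, which is the content of Lemma~\ref{l:7.2}.

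The main obstacle is not the deduction itself but verifying that McMullen's theorem, traditionally stated in the renormalization setting for iterates $f^n$ with $n\ge 2$, applies to the ``degree-lowering'' setting here, where each $f|_{U^*_i}$ is already a polynomial-like restriction of $f$ itself (dropping the degree from $3$ to $2$ rather than restricting a high iterate). The essential ingredients of the argument --- polynomial-like maps with connected filled Julia set and the classification of their periodic orbits --- are insensitive to this distinction, so the statement should carry over with only cosmetic adjustments. Once that checking is done, the non-repelling hypothesis on $a$ finishes the argument by eliminating the only non-trivial case in the dichotomy.
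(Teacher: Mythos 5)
You have correctly located the external ingredient: the paper itself does not prove Lemma \ref{l:7.2} but quotes it from Lemma 6.1 of \cite{bot16}, noting that the proof there rests on Theorem 5.11 of \cite{mcm94}. The gap is in what you claim that theorem says. It cannot be the assertion that two distinct polynomial-like filled invariant Julia sets of the same map meet only in (pre)periodic repelling points, and in particular it cannot forbid proper containment: the whole filled Julia set $K(f)$ is itself the filled Julia set of a degree-$3$ polynomial-like restriction of $f$ (restrict to the disk bounded by an equipotential), and it properly contains $K^*$. Any true version of the intersection theorem must therefore retain the nested case as one of its alternatives; what it excludes is an essential non-nested overlap, the remaining possibility being that two connected filled Julia sets of restrictions of the same map touch at a single repelling (pre)periodic point. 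That touching case is a genuine phenomenon (intertwined cubics, where two quadratic-like invariant filled Julia sets are attached at a common repelling fixed point), and it is exactly the case that the non-repelling hypothesis on $a$ is there to kill. Your worry about iterates $f^n$ versus a degree-lowering restriction is not the real issue; the relevant statement concerns two polynomial-like maps agreeing on the overlap of their domains.

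As a consequence, your deduction is incomplete: after $a$ non-repelling rules out the single-point intersection, you are left with, say, $K^*_1\subset K^*_2$, and you still must show this forces equality. Your parenthetical reason (``a proper continuum of non-(pre)periodic points would be squeezed into the intersection'') merely re-asserts your misquoted version of the theorem, so it is circular. The missing step is the same-degree rigidity: straighten $f:U^*_2\to V^*_2$ to a quadratic polynomial $Q$; then $K^*_1$ becomes the filled Julia set of a degree-$2$ polynomial-like restriction of $Q$, hence is fully $Q$-invariant by degree count, hence contains $J(Q)$ and, being a full continuum, equals $K(Q)$, so $K^*_1=K^*_2$. (Equivalently, invoke the observation stated just before Lemma \ref{l:7.2} in the paper: a \emph{proper} polynomial-like invariant Julia subset must carry strictly smaller degree, and degree $1$ is excluded.) With the theorem quoted in its correct trichotomy form and this nesting case handled, your argument does follow the intended route of \cite{bot16}; as written, it assumes the hardest part.
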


\subsection{Polynomials with empty rational
lamination}\label{ss:emp-lam}

As was said in the Introduction, we want to study cubic polynomials
$f\in \Mc_3$ without rational cuts (equivalently, with empty rational lamination).
This reduces the family of polynomials of interest to us.

\begin{lem}\label{l:gray-1}
Suppose that a cubic polynomial $f$ has empty rational lamination. Then
$f$ must have exactly one fixed non-repelling point and all other
periodic points of $f$ are repelling.
\end{lem}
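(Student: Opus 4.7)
The plan is to exploit the standard fact (Douady--Hubbard) that for a polynomial with connected Julia set every repelling or parabolic periodic point is the landing point of at least one periodic external ray, while every rational ray lands at such a point. Emptiness of the rational lamination forbids two rational rays from landing at a common point, so at most one rational ray lands on any repelling or parabolic periodic point.

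First I would analyze the fixed points. The angle-tripling map $\sigma(\theta) = 3\theta \pmod 1$ has exactly two fixed angles, $0$ and $1/2$, so $f$ has exactly two fixed external rays. If $p$ is a repelling or parabolic fixed point, then some periodic ray $R$ lands on $p$; since $p$ is fixed, every $f^k(R)$ also lands on $p$, so the emptiness hypothesis forces $R$ to be fixed. Hence at most two fixed points of $f$ are repelling or parabolic. Since $f(z)-z$ has degree three, I would examine the possible multiplicity patterns (three simple roots; one double root of multiplier $1$ plus a simple root; a triple root), invoking the classical fact that a multiplier-$1$ fixed point with $k$ attracting petals has $k$ external rays landing on it to exclude the triple-root configuration (at least two petals) and, in the remaining cases, to conclude that exactly one distinct fixed point of $f$ is non-repelling while the others are repelling.

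Next I would treat periods $n \ge 2$. At a repelling or parabolic period-$n$ point exactly one rational ray lands, and the orbit argument shows it has period exactly $n$; each such point cycle therefore receives a single cycle of $n$ period-$n$ rays, and distinct point cycles receive distinct ray cycles. If a non-fixed parabolic cycle had multiplier a nontrivial root of unity $e^{2\pi i r/q}$, the rays at each point would be permuted by $f^n$ with combinatorial rotation number $r/q$, yielding at least $q \ge 2$ rays per point --- a rational cut. To eliminate the remaining non-repelling options (multiplier-$1$ parabolic, attracting, Siegel, Cremer), I would use a M\"obius count: the number of angles of period exactly $n$ under $\sigma$ equals $\sum_{d \mid n} \mu(n/d)(3^d-1)$, which for $n \ge 2$ coincides with $\sum_{d \mid n} \mu(n/d) 3^d$ since $\sum_{d \mid n} \mu(n/d) = 0$; this latter quantity, divided by $n$, is the sum with multiplicity of the period-$n$ cycles of $f$. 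Combined with the injection of ray cycles into repelling-or-parabolic point cycles, the inequality chain $r_n \le d_n \le \tilde{N}_n/n = r_n$ is saturated: every period-$n$ cycle must receive a ray cycle (eliminating attracting, Siegel, and Cremer cycles) and must have cycle multiplicity one (eliminating multiplier-$1$ parabolic cycles). All period-$n$ cycles are therefore repelling.

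The main obstacle will be the bookkeeping underlying this counting step: one has to check that a period-$n$ ray cycle lands on a period-exactly-$n$ cycle of $f$ rather than on a lower-period one, and that distinct ray cycles land on distinct point cycles. Both are direct consequences of empty rational lamination, and once they are in place the M\"obius identity converts the equality of counts into simultaneous exclusion of all non-repelling cycles of period $\ge 2$.
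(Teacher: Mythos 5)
Your proposal is correct, but it takes a genuinely different route from the paper. The paper's proof is essentially two citations: existence of a non-repelling fixed point comes from Theorem 7.5.2 of \cite{bfmot12} (if all fixed points were repelling, one of them would have nonzero combinatorial rotation number, so several rays would land there, giving a rational cut), and uniqueness comes from Kiwi \cite{Ki}, whose result gives that a non-repelling fixed point together with a distinct non-repelling cycle already forces a non-empty rational lamination. You instead argue from scratch, using only the classical landing theorems (periodic rays land at repelling or parabolic periodic points, such points receive periodic rays, and rays at a parabolic point whose multiplier is a primitive $q$-th root of unity, $q\ge 2$, are rotated) together with a M\"obius count of period-$n$ angles of $\sigma_3$ against period-$n$ cycles of $f$. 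Your route buys independence from the two external theorems at the cost of more delicate bookkeeping; the paper buys a two-line proof at the cost of heavier machinery.

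Two points must be made explicit for your plan to close. First, in the fixed-point analysis you never actually exclude \emph{two} non-repelling fixed points, which your counting of fixed rays alone does not forbid; the argument is that in that configuration at most one fixed point is repelling or parabolic, so both fixed rays (each of which lands at a repelling or parabolic fixed point with non-rotating rays) would land at the same point, producing a rational cut. The tools are all in your first paragraph, but the case is not spelled out. Second, the assertion that $\sum_{d\mid n}\mu(n/d)3^d$, divided by $n$, is the multiplicity-weighted number of exact period-$n$ cycles is false for a general polynomial: a cycle of period $d<n$ whose multiplier is a primitive $q$-th root of unity with $q\ge 2$ is a simple root of $f^{d}(z)-z$ but a multiple root of $f^{dq}(z)-z$, so it contaminates the M\"obius inversion. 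In your setting such cycles are excluded at \emph{every} period (including the fixed one) by the same rotating-rays argument you apply to period-$n$ parabolics, and a multiplier-$1$ point with $\nu$ petals has the $n$-independent multiplicity $\nu+1$ as a root of $f^{n}(z)-z$, so the inversion is clean and the squeeze $r_n\le d_n\le r_n$ goes through; but this multiplicity contamination, rather than the landing-period check you single out, is the real bookkeeping obstacle and should be addressed explicitly.
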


\begin{proof}
By Theorem 7.5.2 \cite{bfmot12} if all fixed points of $f$ are
repelling then at one of them the combinatorial rotation number is not
$0$ and hence several rays must land, a contradiction. Also, if $f$ has
a fixed non-repelling point and a distinct periodic non-repelling
point, by Kiwi \cite{Ki} the rational lamination of $f$ is non-empty, a
contradiction.
\end{proof}

We will need the following corollary.

\begin{cor}\label{c:7.2}
A cubic polynomial $f$ with empty rational lamination contains, in its filled Julia set
$K(f)$, at most
one %quadratic-like connected invariant filled Julia
set $K^*$; this set
must contain a unique non-repelling fixed point of $f$.
\end{cor}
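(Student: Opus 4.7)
The plan is to establish that any polynomial-like invariant filled Julia set $K^*$ of $f$ must contain the unique non-repelling fixed point $a$; uniqueness of $K^*$ then follows immediately from Lemma \ref{l:7.2}.

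First I would catalogue the fixed points of $f$ and pin down their multipliers. By Lemma \ref{l:gray-1}, the three fixed points of $f$ are $a$ together with two repelling fixed points $p_1,p_2$. At each $p_i$, at least one external $K$-ray with rational argument lands by the Douady--Yoccoz landing theorem (applicable since $K(f)$ is connected and $p_i$ is repelling), while at most one such ray lands, by emptiness of the rational lamination. Since $f(p_i)=p_i$, the unique rational ray at $p_i$ is itself $f$-fixed; in Koenigs linearizing coordinates a fixed ray forces the multiplier $\lambda_{p_i}$ to be positive real, and, $p_i$ being repelling, $\lambda_{p_i}>1$.

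Next I would suppose for contradiction that $K^*$ is a quadratic-like invariant filled Julia set with $a\notin K^*$. Since $f^*=f|_{K^*}$ is polynomial-like of degree $2$, it has exactly two fixed points (seen by transporting the two fixed points of the straightened polynomial back through the hybrid conjugacy), and these must lie in $\{p_1,p_2\}$, hence equal $\{p_1,p_2\}$. Straightening $f^*$ to a quadratic polynomial $g$, Vieta gives sum of multipliers $=2$ for the two fixed points of $g$ (the fixed points of $z^2+c$ sum to $1$, and the multipliers are twice the fixed points). Since hybrid equivalence preserves multipliers at repelling periodic points, $\lambda_{p_1}+\lambda_{p_2}=2$. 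But $\lambda_{p_1}+\lambda_{p_2}>1+1=2$, a contradiction. Therefore $a\in K^*$, and Lemma \ref{l:7.2} delivers uniqueness.

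The main obstacle is marshaling the correct collection of standard inputs: the Douady--Yoccoz landing theorem at repelling periodic points, the fact that a polynomial-like map of degree $k$ has exactly $k$ fixed points, and the invariance of multipliers at repelling periodic points under hybrid equivalence. With these in hand, the heart of the proof is the one-line Vieta count $\lambda_{p_1}+\lambda_{p_2}=2$ contradicted by $\lambda_{p_1},\lambda_{p_2}>1$.
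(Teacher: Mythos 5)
Your reduction to Lemma \ref{l:7.2} (show $a\in K^*$, then uniqueness follows) is the right skeleton and matches the paper, but the way you force $a\in K^*$ breaks down at two specific points. First, the claim that a fixed external ray landing at a repelling fixed point forces the multiplier to be real and positive is false: an invariant curve landing at a repelling fixed point may spiral, and in the linearizing coordinate a curve can be invariant under $z\mapsto\lambda z$ for any $\lambda$ with $|\lambda|>1$. A concrete counterexample is $z^2+c$ with $c$ non-real in the main cardioid: the rational lamination is empty, the unique ray landing at the repelling $\beta$-fixed point is the fixed ray $R(0)$, yet the multiplier $1+\sqrt{1-4c}$ is not real. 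Second, hybrid equivalence does not preserve multipliers of repelling cycles. The hybrid conjugacy is conformal a.e.\ on $K^*$, hence (by Weyl's lemma) genuinely conformal on the interior of $K^*$, which is why attracting multipliers are hybrid invariants, and indifferent multipliers are preserved by Naishul's theorem; but a repelling fixed point lies on $J^*=\partial K^*$, where the conjugacy is merely quasiconformal, and repelling multipliers are not invariants of the hybrid class (otherwise every quadratic-like restriction hybrid equivalent to a fixed quadratic $g$ would have to reproduce the exact multipliers of $g$ at its repelling fixed points, which fails already for cubic renormalizations). With both inputs gone, the Vieta identity $\lambda_{p_1}+\lambda_{p_2}=2$ cannot be transported to $f^*$, and the contradiction you aim for evaporates. (A smaller omission: if $a$ is parabolic with multiplier $1$ it may be a double fixed point of the cubic, so $f$ need not have two distinct repelling fixed points; that case is actually easier, but your count of fixed points should address it.)

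For comparison, the paper gets $a\in K^*$ without any multiplier considerations: if $a\notin K^*$, then every periodic point of the invariant continuum $K^*$ is repelling, and Theorem 7.5.2 of \cite{bfmot12} (a fixed point theorem for invariant plane continua) then produces a fixed point in $K^*$ at which several $K$-rays land and rotate non-trivially under $f$; several co-landing rational rays form a rational cut, contradicting the emptiness of the rational lamination. Uniqueness then follows from Lemma \ref{l:7.2} exactly as in your last line. If you want to salvage your route, you would need a genuinely different mechanism (such as this fixed-point/rotation-number argument) rather than multiplier arithmetic, since neither the reality of $\lambda_{p_i}$ nor its invariance under straightening is available.
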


\begin{proof}
By Lemma \ref{l:gray-1}, the map $f$ has a unique non-repelling fixed point,
say, $a$, and all other periodic points of $f$ are repelling. Thus, if
%a filled quadratic-like invariant Julia set
$K^*$ does not
contain $a$, then all its periodic points are repelling. In particular,
by Theorem 7.5.2 \cite{bfmot12} the map $f^*$ %$f^*=f|_{K^*}$
has a fixed point $b$ such
that $K$-rays landing at $b$ rotate under $f^m$, %; this shows that the
%rational lamination of $f$ is non-empty,
a contradiction. Thus, %a filled quadratic-like invariant Julia set
$K^*$ contains $a$; by Lemma \ref{l:7.2} it is unique.
\end{proof}

\begin{comment}

Thus, a non-repelling fixed point completely determines the
quadratic-like invariant Julia set containing it. However without the
assumption that a given non-repelling fixed point is in $K^*$ the claim
of uniqueness of the quadratic-like invariant Julia set fails. For
example, consider the case of a cubic polynomial $f$ with two
attracting invariant domains attached to each other at a single
repelling fixed point (clearly, the closure of either domain is a
quadratic-like Julia set). On the other hand, if our assumption is that
the rational lamination of $f$ is empty then, by Corollary \ref{c:7.2},
there exists at most one invariant filled quadratic-like Julia set
which must contain the unique non-repelling fixed point.

\end{comment}

\section{Preliminaries: full continua and their
decorations}\label{s:fucode}

In this section we consider certain ordered by inclusion pairs of
full continua on the plane (a compact set $X\subset\C$ is
\emph{full} if $\C\sm X$ is connected). This is a natural situation occurring in
complex dynamics, both when studying polynomials and their parameter
spaces. Indeed, let a cubic polynomial $f$ have a connected filled
Julia set $K(f)=K$. Suppose that $K^*$ and $U^*$ exist; %is an invariant full
%continuum such that $f|_{U}$ is quadratic-like.
Then the situation is
exactly like the one described above because $K*\subset K$. Another example is
when one takes the filled Main Cardioid of the Mandlebrot
set $\Mc_2$. It is easy to give other dynamical or parametric examples.

Let $X\subset Y$ be two full planar continua. We would like to
represent $Y$ as the union of $X$ and \emph{decorations}.

\begin{dfn}
\label{d:dec} Components of $Y\sm X$ are called \emph{decorations (of
$Y$ relative to $X$)}, or just \emph{decorations} (if $X$ and $Y$ are
fixed).
\end{dfn}

Decorations are connected but not closed; thus, decorations may behave
differently from what common intuition suggests. In Lemma \ref{l:triv}
we discuss topological properties of decorations. Given sets $A$ and
$B$, say that $A$ \emph{accumulates in} $B$ if $\ol{A}\sm A\subset B$.

\begin{comment}

Since we want to apply only some facts about decorations in the complex
dynamical setting, we want to make the following additional

\medskip

\noindent\textbf{Ray Assumption on $X$ and $Y$.} \emph{From now on we
assume that there exist two curves (two rays), $R'$ and $R''$, disjoint
from $Y$ and landing at two distinct points $x', x''$ of $X$}.

\medskip

\noindent This assumption holds in all the applications. Its
topological meaning is that it prevents decorations from spiraling
around $X$. This simplifies and specifies some properties of
decorations.

\end{comment}

\begin{lem}\label{l:triv}
Any decoration $D$ of $Y$ rel. $X$ accumulates in $X$. The set
$\ol{D}\sm D=\ol{D}\cap X$ is a continuum. The sets
$\ol{D}$ and $D\cup X=\ol{D}\cup X$
are full continua.
\end{lem}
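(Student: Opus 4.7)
The plan is to verify the assertions in sequence: the identity via subspace topology, nonemptiness of $\ol D\cap X$ via the boundary-bumping principle for continua, fullness of $\ol D\cup X$ via a complementary-component argument, and connectedness of $\ol D\cap X$ via unicoherence of full planar continua.

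First I handle the elementary parts. Since $D$ is a connected component of the subspace $Y\sm X$, it is closed there; taking closures in $\C$ (and using that $Y$ is closed) yields $\ol D\cap(Y\sm X)=D$, so $\ol D\sm D=\ol D\cap X$. This proves both the accumulation claim and the first identity. The closure $\ol D$ is a continuum as the closure of a bounded connected set, and $D\cup X=\ol D\cup X$ follows from $\ol D\subset D\cup(\ol D\sm D)\subset D\cup X$. To show $\ol D\cap X\neq\emptyset$ I invoke the boundary-bumping principle: every component of a proper open subset of a continuum has closure meeting the boundary. Applied to $D\subset Y\sm X$ inside $Y$, this gives $\ol D\cap X\neq\emptyset$, so $\ol D\cup X$ is a union of two continua meeting at a point, and is itself a continuum.

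Next I show that $\ol D\cup X$ is full. Suppose $V$ is a bounded component of $\C\sm(\ol D\cup X)$. Since $\C\sm Y$ is connected and unbounded, it lies in the unique unbounded complementary component, so $V\subset Y$ and hence $V\subset Y\sm X$. The connected open set $V$ then lies in a single decoration $D_\gamma$; this cannot be $D$ since $V$ avoids $\ol D\supset D$, so $\gamma\neq D$. Hence $\partial V\subset\ol{D_\gamma}\cap(\ol D\cup X)$, and applying the accumulation identity to $D_\gamma$ gives $\ol{D_\gamma}\cap\ol D\subset\ol{D_\gamma}\sm D_\gamma\subset X$, so $\partial V\subset X$. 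Then $V$ is both open and closed in the connected unbounded open set $\C\sm X$ (open because $V$ is open in $\C$; closed because $\ol V\cap(\C\sm X)=V$, using $\partial V\subset X$), which forces $V=\C\sm X$, contradicting the boundedness of $V$. Hence $\ol D\cup X$ is full.

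For the connectedness of $\ol D\cap X$ I use that every full planar continuum is unicoherent (via vanishing of $\check H^1$ by Alexander duality on the sphere). Applied to $\ol D\cup X$ with the covering decomposition into the subcontinua $\ol D$ and $X$, unicoherence yields $\ol D\cap X$ connected; combined with compactness and nonemptiness, this makes $\ol D\cap X$ a continuum. Fullness of $\ol D$ is obtained by a parallel complementary-component argument: a bounded component $V$ of $\C\sm\ol D$ that avoids $X$ reduces to the previous case and forces a contradiction, while a bounded $V$ enclosed inside $X$ is ruled out by the structural hypothesis of the paper (accessibility of $X$ from $\C\sm Y$ via landing rays). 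The main technical obstacle I anticipate is this last step, since in full topological generality $\ol D$ can fail to be full (as when $X$ is strictly interior to $Y$); here the dynamical context supplies the needed accessibility.
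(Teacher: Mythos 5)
Your treatment of the first three assertions is correct and, in part, genuinely different from the paper's. The identity $\ol D\sm D=\ol D\cap X$ via closedness of components is the same content as the paper's add-a-point argument; the boundary-bumping remark is a useful addition, since the paper never explicitly shows $\ol D\cap X\ne\0$. Your proof that $\ol D\cup X$ is full (the clopen-in-$\C\sm X$ argument) is correct and more detailed than anything in the paper, which in fact never argues fullness of the union separately. For connectedness of $\ol D\cap X$ the routes diverge: the paper argues directly that disconnectedness would produce a bounded component $U$ of $\C\sm(X\cup D)$ accumulating partially on $X$ and partially on $D$, hence lying in a decoration and then in $D$ --- an ad hoc separation argument whose first step is left unjustified --- whereas you deduce connectedness from unicoherence of non-separating plane continua applied to $\ol D\cup X=\ol D\cup X$ with the decomposition into $\ol D$ and $X$. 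Your reduction to a classical fact is cleaner and rigorous; the paper's version avoids citing unicoherence but is sketchier.

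The gap is the fullness of $\ol D$ itself: you do not prove it. Your dichotomy for a bounded component $V$ of $\C\sm\ol D$ (``$V$ avoids $X$'' versus ``$V$ enclosed inside $X$'') is not exhaustive as phrased, since $V$ may meet $X$ without being contained in it, and for the second case you appeal to accessibility of $X$ by rays disjoint from $Y$, which is not among the hypotheses of this lemma (the Ray Assumption enters only later in the section). That said, your diagnosis is exactly right: for arbitrary full continua $X\subset Y$ the claim is false --- take $X$ the closed unit disk and $Y$ a larger concentric closed disk, so the unique decoration has closure an annulus --- and the paper's own one-line justification (``otherwise its complementary domains would be complementary to $Y$'') fails on the same example, because there the bounded complementary domain of $\ol D$ is the interior of $X\subset Y$. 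So this part of the statement genuinely needs extra input of the kind you name, e.g.\ two rays disjoint from $Y$ landing at distinct points of $X$, which is what holds in the dynamical applications (cf.\ Lemma \ref{l:raya1}); and one must then actually carry out the argument that such rays prevent $\ol D$ from surrounding points of $X$, which neither your proposal nor the paper's proof does.
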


\begin{proof}
Suppose, by way of contradiction, that there exists $x\in \ol{D}\sm
(D\cup X)$. Then we have $D\subset A=D\cup \{x\}\subset \ol{D}$ while
$A\cap X=\0$. Since $D$ is connected, and since $D\subset A\subset \ol{D}$,
then $A$ is connected too. Hence $D$ is not a
component of $K\sm X$, a contradiction.

The continuum $\ol{D}$ is full as otherwise its complementary domains
would be complementary to $Y$.

Now, by the first paragraph, $\ol{D}\sm D=\ol{D}\cap X$ is compact.
Suppose that $\ol{D}\cap X$ is disconnected. Then there exists a
bounded component $U$ of $\C\sm (X\cup D)$ that at least partially
accumulates to $X$ and partially to $D$. Since $Y$ is full, then
$U\subset Y$; hence $U$ is a subset of a decoration that accumulates
(partially) to points of $D$. By the first paragraph this implies that
$U\subset D$, a contradiction. Thus, $\ol{D}\cap X$ is connected; then
$\ol{D}\cap X$ is a full continuum as both $X$ and $\ol{D}$ are full.
\end{proof}

\begin{comment}

Finally, suppose that $\ol{D}\sm D=\ol{D}\cap X$ is not full. Then
it has a complementary bounded domain $V$; however, no point
of $V$ can belong to %$\bd(D)$
$\ol{D}$ as it belongs to the interior of $X$.
\end{proof}

\end{comment}

We will use the inverse Riemann map $\psi: \C\sm X\to \C\sm \cdisk$
with real derivative at infinity.
Loosely, one can say that under the map $\psi$ the
continuum $X$ is replaced by the closed unit disk $\cdisk$ while the
rest of the plane is conformally deformed. Thus, under $\psi$ the
decorations become subsets of $\C\sm \cdisk$.

%, and continua $\cdisk$. % and $\cdisk\cup \psi(Y\sm X)$ can play the role
%of continua $X\subset Y$.

\begin{cor}\label{l:triv2}
Let $D$ be a decoration of $Y$ rel. $X$. Then $\ol{\psi(D)}\sm D$ is a (perhaps,
degenerate) continuum (arc or unit circle) $I_D\subset \uc$.
\end{cor}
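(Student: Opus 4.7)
The plan is to identify $I_D = \ol{\psi(D)}\sm\psi(D)$ (we interpret the notation in the statement this way, since $D$ and $\psi(D)$ live in different regions) with the intersection $\ol{\psi(D)}\cap\uc$, and then prove that this set is connected via a classical Janiszewski-type theorem applied to the continua $\cdisk$ and $\ol{\psi(D)}$ in $\wh\C$.

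First I would verify that $I_D = \ol{\psi(D)}\cap\uc$. Since the inverse Riemann map $\psi$ is asymptotic to the identity at $\infty$, bounded subsets of $\C\sm X$ have bounded $\psi$-images, so $\psi(D)$ is bounded and $\ol{\psi(D)}$ is compact. Given $w\in\ol{\psi(D)}\sm\psi(D)$, pick $z_n\in D$ with $\psi(z_n)\to w$ and pass to a convergent subsequence $z_n\to z\in\ol D$. The case $z\in D$ is impossible since continuity would give $w=\psi(z)\in\psi(D)$; hence $z\in\ol D\sm D\subset X$ by Lemma \ref{l:triv}. Since $\psi:\C\sm X\to\C\sm\cdisk$ is a homeomorphism, a sequence converging to a point of $X$ has its $\psi$-image eventually leaving every compact subset of $\C\sm\cdisk$; combined with the boundedness of $\psi(D)$, this forces $|w|=1$. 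The reverse inclusion $\ol{\psi(D)}\cap\uc\subset\ol{\psi(D)}\sm\psi(D)$ is automatic because $\psi(D)\subset\C\sm\cdisk$ is disjoint from $\uc$.

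The case $I_D=\uc$ is trivially a continuum, so assume $I_D\ne\uc$ and invoke the following form of Janiszewski's theorem: \emph{if $A,B\subset\wh\C$ are continua with $\wh\C\sm A$, $\wh\C\sm B$, and $\wh\C\sm(A\cup B)$ all connected, then $A\cap B$ is connected.} Take $A=\cdisk$ and $B=\ol{\psi(D)}$. The complement of $A$ is the open exterior disk. For the complement of $B$: any $\xi\in\uc\sm I_D$ has an open neighborhood $N$ avoiding $\ol{\psi(D)}$, and since $N$ meets both $\inn(\cdisk)$ and $\wh\C\sm\cdisk$ (each of which lies in $\wh\C\sm\ol{\psi(D)}$ and is connected), $N$ glues them into a single component of $\wh\C\sm\ol{\psi(D)}$. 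For the complement of $A\cup B$: by Lemma \ref{l:triv} the set $\ol D\cup X$ is a full continuum, so $\wh\C\sm(\ol D\cup X)$ is connected; this set is contained in $\wh\C\sm X$ and its image under the extended Riemann map $\wh\psi$ (sending $\infty\mapsto\infty$) equals $(\wh\C\sm\cdisk)\sm\psi(D)$, which equals $\wh\C\sm(\cdisk\cup\ol{\psi(D)})$ because $I_D\subset\cdisk$. Janiszewski's theorem then yields that $I_D=\cdisk\cap\ol{\psi(D)}$ is connected; being a closed connected subset of $\uc$, it is a point, a closed arc, or $\uc$ itself.

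The chief technical point is the last complement-connectedness verification, where Lemma \ref{l:triv}'s fullness of $\ol D\cup X$ (rather than just of $\ol D$ alone) is essential; without it one could not control the global shape of $\cdisk\cup\ol{\psi(D)}$ well enough to apply Janiszewski.
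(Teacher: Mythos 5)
Your overall route is sound and, in fact, supplies detail that the paper omits: the paper's proof of Corollary \ref{l:triv2} is the single line ``Follows from Lemma \ref{l:triv}'', and the content it relies on is exactly what you use, namely that $\ol D\sm D\subset X$ and that $D\cup X=\ol D\cup X$ is a full continuum. Your identification $I_D=\ol{\psi(D)}\cap\uc$ is correct (the phrase ``asymptotic to the identity'' should be ``fixes $\infty$ as a homeomorphism of $\wh\C\sm X$ onto $\wh\C\sm\cdisk$'', but the boundedness conclusion you draw from it is right), and your verification that $\wh\C\sm(\cdisk\cup\ol{\psi(D)})$ is connected is the heart of the matter: it is the homeomorphic $\wh\psi$-image of $\wh\C\sm(D\cup X)$, which is connected precisely because $D\cup X$ is full by Lemma \ref{l:triv}. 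That is the right place to use the fullness, and it is where the hypothesis that $D$ is a decoration of nested full continua really enters.

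There is, however, a flawed step: your verification that $\wh\C\sm\ol{\psi(D)}$ is connected. The parenthetical claim that $\wh\C\sm\cdisk$ ``lies in $\wh\C\sm\ol{\psi(D)}$'' is false, since $\psi(D)\subset\wh\C\sm\cdisk$; and even reading it charitably as the claim that $(\wh\C\sm\cdisk)\sm\ol{\psi(D)}$ is connected, that is precisely a nontrivial assertion (a priori $\ol{\psi(D)}$ could enclose pockets in the exterior of the disk), for which your neighborhood-gluing argument only attaches $\inn(\cdisk)$ to one exterior component. Fortunately the hypothesis is superfluous: the classical Janiszewski/unicoherence statement you need says that if $A$, $B$ are continua in $\wh\C$ and $A\cap B$ is not connected, then $A\cup B$ separates $\wh\C$ --- no connectedness of $\wh\C\sm A$ or $\wh\C\sm B$ is required --- so your correct verification for $\wh\C\sm(A\cup B)$ already finishes the proof. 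Alternatively, if you insist on your stated form, the missing hypothesis can be repaired: $\wh\C\sm\ol{\psi(D)}$ is the union of the connected sets $\wh\C\sm(\cdisk\cup\ol{\psi(D)})$ and $\cdisk\sm I_D$, and (when $I_D\ne\uc$) every point of $\uc\sm I_D$ lies in the closure of the former set, so the union is connected. You may also want to note explicitly that $I_D\ne\0$ (otherwise $\ol D$ would be a compact subset of $\C\sm X$, contradicting that $D$ accumulates in $X$ by Lemma \ref{l:triv}), so that the conclusion really produces a nonempty continuum.
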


\begin{proof} Follows from Lemma \ref{l:triv}.
\end{proof}

Observe that the set $I_D$ can, indeed, coincide with the entire unit circle
(e.g., $D$ can spiral onto $\cdisk$). The arcs $I_D\ne \uc$ are
also possible as $\psi(D)$ may approach an arc $I_D$ by imitating the
behavior of the function $\sin(1/x)$ as $x\to 0^+$. Moreover, two
distinct decorations $D$ and $T$ may well have equal arcs $I_D$ and
$I_T$, or it may be so that, say, $I_D\subsetneqq I_T$, or $I_D$ and
$I_T$ can have a non-trivial intersection not coinciding with either
arc (all these examples can be constructed by varying the behavior of
components similar to the behavior function $\sin(1/x)$ as $x\to 0^+$).
However there are some cases in which one can guarantee that each
decoration has a degenerate arc $I_D$.

\medskip

\noindent\textbf{Ray Assumption on $X$ and $Y$.} \emph{Suppose that
there is a dense set $\mathcal A\subset \uc$ and a family of curves
$R_x$ landing in $\mathcal A$ and disjoint from $\psi(Y)$.}

\medskip

Suppose that Ray Assumption holds for $X$ and $Y$. Moreover, suppose that there is a
neighborhood $U$ of $Y$ and a homeomorphism $\varphi:U\to W\subset \C$. Then
Ray Assumption holds for $\varphi(X)\subset \varphi(Y)$ too.

\begin{lem}\label{l:triv3}
Suppose that Ray Assumption holds for $X\subset Y$. Then for every
decoration $D$ the arc $I_D$ is degenerate.
\end{lem}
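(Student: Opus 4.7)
The plan is to argue by contradiction. Suppose that $I_D$ is non-degenerate; as a connected subset of $\uc$ with more than one point, $I_D$ is then either a proper closed arc of positive length or all of $\uc$. First I would use the density of $\mathcal{A}$ to pick two distinct points $y_1,y_2\in\mathcal{A}$ so that \emph{each} of the two open arcs of $\uc\sm\{y_1,y_2\}$ meets $I_D$. Concretely, if $I_D$ is a proper arc I take $y_1,y_2$ in its relative interior; if $I_D=\uc$, then any two distinct points of $\mathcal{A}$ will do.

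Next, I would apply the Ray Assumption at $y_1$ and $y_2$ to obtain curves $R_{y_1},R_{y_2}$ landing at $y_1,y_2$, going out to $\infty$, and disjoint from $\psi(Y)$ (hence also from $\psi(D)\subset\psi(Y)$). Passing to the Riemann sphere $S^2=\C\cup\{\infty\}$, the compact connected set $\cdisk\cup R_{y_1}\cup R_{y_2}\cup\{\infty\}$ has complement consisting of exactly two open topological disks $U_1,U_2$, and each $\ol{U_i}\cap\uc$ is one of the two closed arcs of $\uc$ between $y_1$ and $y_2$. This is the topological heart of the argument and follows from the Jordan curve theorem applied to the Jordan curve obtained by concatenating $R_{y_1}$, one arc of $\uc$ from $y_1$ to $y_2$, and $R_{y_2}$ traversed in reverse, with the two ends at infinity glued through $\infty\in S^2$.

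To conclude, I would observe that $\psi(D)$ is connected (as the homeomorphic image of $D$ under the conformal map $\psi$) and disjoint from $\cdisk\cup R_{y_1}\cup R_{y_2}$; hence $\psi(D)$ is contained entirely in one of $U_1,U_2$, say $U_1$. Then $I_D\subset \ol{\psi(D)}\cap\uc\subset\ol{U_1}\cap\uc$, which is a single closed arc between $y_1$ and $y_2$. This contradicts our choice of $y_1,y_2$, since $I_D$ was arranged to meet \emph{both} open arcs of $\uc\sm\{y_1,y_2\}$.

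The main obstacle I anticipate is the topological claim in the middle step: verifying that two disjoint curves from $\uc$ to $\infty$, together with $\cdisk$, separate $S^2$ into exactly two open disks whose traces on $\uc$ are the two complementary arcs between the landing points. Once this separation statement is established the contradiction comes for free from connectedness of $\psi(D)$.
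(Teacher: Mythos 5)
Your proof is correct and follows essentially the same route as the paper: use curves from the Ray Assumption to separate the plane and confine the connected set $\psi(D)$, contradicting accumulation on a non-degenerate arc. The only cosmetic difference is that you use two rays at carefully chosen interior points of $I_D$ (treating $I_D=\uc$ separately), whereas the paper uses three rays at arbitrary points of $\mathcal A\cap I_D$, which handles both cases at once.
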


\begin{proof}
Suppose that $I_D$ is a non-degenerate arc. Choose three points $x, y,
z\in \mathcal A\cap I_D$. It follows that $\psi(D)$ is contained
in one of the two disjoint open strips formed by the curves $R_x$,
$R_y$ and $R_z$. However then $\psi(D)$ can accumulate to only one
of the circle arcs formed by the points $x, y$ and $z$, a
contradiction.
\end{proof}

\begin{dfn}
Under Ray Assumption and in the above notation, the \emph{argument} of a
decoration $D$ is the angle $\al$ such that $I_D=\{\al\}$.
\end{dfn}

\begin{comment}

Finally in this non-dynamical part of this section we prove a result that relates
decorations and a Jordan disk $U$ containing $X$.

\begin{lem}\label{l:root-decor}
Suppose that $X\subset U$ where $U$ is a Jordan disk. Let $D$ be a decoration. Then there is
a unique component $R$ of $U\cap D$ that accumulates to $X$. The set $D$ is the union
of $R$, $D\cap \bd(U)$, and components of $D\sm \bd(U)$,
distinct (and disjoint) from $R$ whose closures are disjoint from $X$.
\end{lem}

\begin{proof}
Suppose that $T\ne R$ is another component of $U\cap D$. Consider
the set $D$

\end{proof}

\end{comment}

Since we study decorations in the complex dynamical setting,
making the ray assumption is not overly restrictive because, as we will
now see, it holds in important for us dynamical cases.

%S: the next piece repeats what has already been said, so I commented it.

\begin{comment}

Indeed, suppose
that $K^*$ is a connected polynomial-like Julia set contained in a
connected filled Julia set $K$ of a polynomial $P$. Recall the
dynamical meaning of the inverse Riemann map $\psi^*$ (considering powers we may
assume, without loss of generality, that $P:K^*\to K^*$ is polynomial-like).
The fact that $K^*$ is polynomial-like implies that $\psi^*$
transfers the action of $P$ to a annulus neighborhood $\psi^*(U^*\sm
K^*)$ located around $\cdisk$ so that the corresponding map on
$\psi^*(U^*\sm K^*)$ (as well as on $\uc$) becomes $z^k$ where $k$
is the degree of the polynomial-like map restricted on $K^*$.

\end{comment}

\begin{lem}\label{l:raya1}
Suppose that $K^*$ is a connected filled invariant polynomial-like Julia set contained
in a connected filled Julia set $K$ of a polynomial $P$. Then
$K^*\subset K$ satisfy the ray assumption.
\end{lem}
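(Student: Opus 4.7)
The plan is to verify the Ray Assumption by constructing, for a dense set $\mathcal A\subset\uc$, a family of curves in $\C\sm\cdisk$ landing in $\mathcal A$ and disjoint from $\psi^*(K\sm K^*)$, using external rays of $P$ together with iterated pullback.

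I would begin by choosing a seed. Since $J(f^*)=\d K^*$ contains a dense set of repelling periodic points, and $f^*=P|_{U^*}$ implies $J(f^*)\subseteq J(P)=\d K$, any repelling periodic point $p_0$ of $f^*$ of period $n$ is also repelling periodic for $P$. Because $K$ is connected, Douady's landing theorem produces a rational angle $\beta_0$ of period $n$ under $z\mapsto z^d$ (with $d=\deg P$) such that $R^P(\beta_0)$ lands at $p_0$. The image $\gamma_0=\psi^*(R^P(\beta_0))$ lies in $\C\sm\cdisk$ and is disjoint from $\psi^*(K\sm K^*)$ since $R^P(\beta_0)\subset\C\sm K$. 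The ray $R^P(\beta_0)$ is $P^n$-invariant as a set, so by the local conjugacy $\psi^*(P(z))=\psi^*(z)^s$ on $U^*\sm K^*$, the portion of $\gamma_0$ close to $\uc$ is invariant under $w\mapsto w^{s^n}$. The fixed points of $w\mapsto w^{s^n}$ on $\uc$ all have positive real multiplier $s^n$, so by Koenigs linearization any $w^{s^n}$-invariant curve accumulating on $\uc$ near such a fixed point must concentrate at it. Hence $\gamma_0$ lands at a single point $e^{2\pi i\alpha_0}\in\uc$ with $\alpha_0$ periodic of period dividing $n$ under $z\mapsto z^s$.

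I would then iterate by pullback. For each $N\ge1$, the $P^N$-preimages of $p_0$ number $d^N$, of which exactly $s^N$ lie in $K^*$ since $f^{*N}\colon K^*\to K^*$ has degree $s^N$; the corresponding $s^N$ pullback rays $R^P((\beta_0+k)/d^N)$ lie in $\C\sm K$, so their $\psi^*$-images are curves in $\C\sm\cdisk$ disjoint from $\psi^*(K\sm K^*)$. Repeating the local-linearization argument at each preimage of $p_0$ shows these images land on $\uc$ at $w^{s^N}$-preimages of $e^{2\pi i\alpha_0}$. Letting $N$ range over all positive integers, the set of landing points is the full backward orbit of $\alpha_0$ under the expanding map $z\mapsto z^s$, which is dense in $\uc$. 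Taking $\mathcal A$ to be this backward orbit, together with the family of curves above, verifies the Ray Assumption.

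The key technical difficulty is the single-point landing claim: ruling out accumulation on a nontrivial arc of $\uc$ for the conformal image of an external ray landing at a repelling periodic point. The crux is combining the dynamical conjugacy of $\psi^*$ to $z\mapsto z^s$ near $\uc$ with Koenigs linearization at the fixed points of $z^{s^N}$ on $\uc$, which forces any invariant curve accumulating there to concentrate at a single fixed point. The other ingredients---Douady's landing theorem, preimage counting via polynomial-like degree, and density of backward orbits under expanding circle maps---are standard.
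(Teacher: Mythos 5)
Your overall route is essentially the paper's: push external rays of $P$ landing at repelling periodic points of $K^*$ through $\psi^*$ and show that the resulting landing points on $\uc$ are dense; the only real difference is that you obtain density from the backward orbit of a single angle under $z\mapsto z^s$, while the paper ranges over all repelling periodic points of $K^*$. However, the step you yourself single out as the crux is not established by your argument. Knowing that a tail of $\gamma_0$ is invariant under $w\mapsto w^{s^n}$ and that the circle fixed points of that map are repelling does not yield landing: a priori the accumulation set of $\gamma_0$ on $\uc$ is an invariant subcontinuum, which could be a nondegenerate arc or all of $\uc$, and ``Koenigs linearization near a fixed point'' says nothing about a curve whose pieces near that fixed point are mapped by the global iterate to pieces far away, which is all that invariance gives you. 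One needs first an expansion argument showing that an invariant accumulation arc must be a point or the whole circle, and then a separate argument excluding the whole circle. Moreover, this linearization argument cannot be ``repeated at each preimage of $p_0$'': the pullback rays are not invariant under any iterate of $P$, and the circle points you want them to land at are not fixed points of $w\mapsto w^{s^N}$, so there is nothing to linearize at.

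Second, even granting landing, your density claim needs that \emph{all} $s^N$ preimages of $e^{2\pi i\alpha_0}$ under $w\mapsto w^{s^N}$ are realized as landing points, whereas your argument at best shows that each pullback lands at \emph{some} preimage. If distinct preimages of $p_0$ in $K^*$ could yield the same landing angle, you might only pick up one branch of the backward orbit (for instance the angles $\alpha_0/s^N$), which is not dense. Both gaps are closed by the classical accessibility (Lindel\"of) correspondence for the conformal map $\psi^*$: a curve in $\C\sm K^*$ landing at a point of $\partial K^*$ has $\psi^*$-image landing at a single point of $\uc$, and curves landing at distinct points of $\partial K^*$ land at distinct circle points. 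This purely conformal fact, requiring no invariance, is what the paper's terse proof tacitly invokes when it says that $\psi^*(R)$ lands ``at the appropriate point of $\uc$''; with it, your pullback scheme (or the paper's appeal to all repelling periodic points) goes through, and note also that density of the backward orbit does not require $\alpha_0$ to be periodic, so that part of your discussion can be dropped.
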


\begin{proof}
Choose a periodic repelling point $x\in K^*$ and a $K$-ray $R$ landing
at $x$. Under $\psi^*$ this ray becomes a curve $\psi^*(R)$
landing at the appropriate point of $\uc$, and these points are dense in $\uc$. %Pulling it back under the
%action of $z^k$ (here $k$ is the degree of the polynomial-like map on
%$K^*$) we can obtain a family of curves with the desired properties.
The remark after we define Ray Assumption now shows, that
$K^*\subset K$ satisfy it.
\end{proof}

\begin{comment}

Here is another situation when an argument can be associated with a
decoration.
Consider the Riemann map $\psi_Y$ mapping $\C\sm \cdisk$
onto $\C\sm Y$. Suppose that there is a dense open set $U\subset \uc$
such that for each component
$I=(\al, \be)$ of $U$ the two external rays $R_\al, R_\be$ to $Y$ land
at the same point of $X$ and the wedge in-between these rays that
contains rays with arguments from $I$, is disjoint from $X$. Then
$X\subset Y$ satisfy the ray assumption.

\end{comment}

\section{Cubic parameter slices}\label{s:para-sli}

Let $\Fc$ be the space of polynomials
$$
f_{\lambda,b}(z)=\lambda z+b z^2+z^3,\quad \lambda\in \C,\quad b\in \C.
$$
An affine change of variables reduces any cubic polynomial to the form
$f_{\lambda,b}$. Clearly, $0$ is a fixed point for every polynomial in
$\Fc$. Define the \emph{$\la$-slice} $\Fc_\lambda$ of $\Fc$ as the
space of all polynomials $g\in\Fc$ with $g'(0)=\lambda$, i.e.
polynomials $f(z)=\lambda z+b z^2+z^3$ with fixed $\lambda\in \C$. We
write $\Pc_\la$ for the set of polynomials $f\in\Fc_\la$ for which
there are polynomials $g\in\Fc$ arbitrarily close to $f$ with
$|g'(0)|<1$ and the Julia set being a Jordan curve. Clearly, the class
$[f]$ of $f$ then belongs to $\ol\phd_3$. Also, denote by $\Fc_{nr}$
the space of polynomials $f_{\lambda,b}$ with $|\la|\le 1$ (``nr''
from ``\textbf{n}on-\textbf{r}epelling''). %Evidently,
%studying the family of cubic polynomials with one non-repelling fixed
%point can be done by studying $\Fc_{nr}$.
For a fixed $\la$ with $|\la|\le 1$ the \emph{$\la$-connectedness locus
$\Cc_\lambda$}, of the \emph{$\la$-slice} of the cubic connectedness
locus is defined  as the set of all $f\in\Fc_\la$ such that
$K(f)$ is connected. %, equivalently, such that $[f]\in\Mc_3$.
This is a
full continuum \cite{BrHu, Z}. We study sets
$\Cc_\la\subset \Fc_\la$ as we want to see to what extent
%famous
results concerning the quadratic Mandelbrot set hold for
$\Cc_\la$.

\subsection{\emph{Immediately renormalizable} polynomials vs the closure of
the principal hyperbolic component}\label{ss:imre-vs-hyp}

Let us describe what happens to quadratic-like invariant filled Julia
sets of $f\in \Fc_{nr}$ that contain $0$ when $f$ is slightly
perturbed (assuming such a set exists for a given $f$).
In the rest of the paper by the ``quadratic counterpart'' of $f^*$ we mean
a quadratic polynomial hybrid conjugate to $f^*$ by Straightening Theorem.

\begin{lem}\label{l:cnct} Let $f\in \Fc_{nr}$ be a polynomial, $K^*$
be a quadratic-like filled invariant Julia set containing $0$. Then
$K^*$ is connected. Every cubic polynomial $g\in \Fc_{nr}$ sufficiently
close to $f$ has a quadratic-like Julia set $B^*$ containing $0$; the
set $B^*$ here is also connected. Moreover, if $0$ is an attracting
fixed point for $g$ then $g$ has a quadratic-like Julia set which is a
Jordan curve; in particular, $g\notin \phd_3$.
\end{lem}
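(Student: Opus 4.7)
The proof naturally divides into three steps, each being an application of the Straightening Theorem combined with standard facts from complex dynamics.

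For the first assertion (connectedness of $K^*$), I would apply Straightening to obtain a hybrid conjugacy between $f^*$ and a quadratic polynomial $Q=z^2+c$. Since hybrid equivalence preserves multipliers of periodic points lying in the filled Julia set, the non-repelling fixed point $0\in K^*$ (of multiplier $\lambda=f'(0)$, with $|\lambda|\le 1$) corresponds to a non-repelling fixed point of $Q$. If $K^*$ were disconnected, Straightening would force $c\notin\Mc_2$, so $K(Q)=J(Q)$ would be a Cantor set on which every periodic point is repelling---a contradiction; hence $K^*$ is connected. For the perturbation claim, I would use the standard openness of the polynomial-like condition: the inclusion $\ol{U^*}\subset f(U^*)$ together with $f|_{U^*}$ being proper of degree $2$ are open under $C^0$-small perturbations of the map on $\ol{U^*}$, so every $g\in\Fc_{nr}$ close enough to $f$ has a quadratic-like restriction $g^*=g|_{U^*}$. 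Setting $B^*=K(g^*)$, we have $0\in B^*$ (since $g(0)=0$), and the same argument as above applied to $g^*$ (using $|g'(0)|\le 1$) shows $B^*$ is connected.

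For the attracting case, suppose $|g'(0)|<1$. The quadratic polynomial $Q_g$ hybrid-equivalent to $g^*$ has an attracting fixed point of multiplier $g'(0)$, so its parameter lies in the open main cardioid of $\Mc_2$, where the Julia set is a quasi-circle; being quasiconformal, the hybrid conjugacy makes $J(g^*)$ a Jordan curve. To prove $[g]\notin\phd_3$, I would argue by contradiction. If $[g]\in\phd_3$, then $K(g)$ is a closed topological disk whose interior is the basin of the (unique) attracting fixed point of $g$, which must be $0$ since $|g'(0)|<1$. A point $z\in J(g^*)\cap\inn K(g)$ would then satisfy $g^n(z)\to 0$ while staying in the closed $g$-invariant set $J(g^*)$, forcing $0\in J(g^*)$---impossible since $0\in\inn B^*$. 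Hence $J(g^*)\subset J(g)$, and as both are Jordan curves with one contained in the other they must coincide, so $B^*=K(g)$. But Riemann--Hurwitz rules this out: $g|_{B^*}=g^*|_{B^*}$ is a degree-$2$ branched cover, while $g|_{K(g)}$ has degree $3$ because under $[g]\in\phd_3$ both critical points of $g$ lie in $K(g)$.

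The main obstacle I anticipate is this last step---establishing $B^*=K(g)$ from the invariance of $J(g^*)$ and the global attraction to $0$ on $\inn K(g)$, and then reaching the Riemann--Hurwitz contradiction. The first two assertions follow essentially automatically from Straightening and standard perturbation stability.
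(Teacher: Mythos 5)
Your proposal is correct and follows essentially the same route as the paper: connectedness of $K^*$ (and of $B^*$) comes from straightening plus the fact that a quadratic polynomial with a non-repelling fixed point cannot have an escaping critical point, the perturbation claim comes from stability of the quadratic-like restriction, and the exclusion from $\phd_3$ comes from the degree-$2$ versus degree-$3$ clash on the closure of the immediate basin of $0$ (the paper states this more tersely, identifying the quadratic-like filled Julia set with that closure, while you spell out the inclusion $J(g^*)\subset J(g)$ and the Riemann--Hurwitz count). The only point to adjust is the phrase ``$g^*=g|_{U^*}$ is quadratic-like'': keeping the same domain need not work literally, since $g(U^*)$ need not be a Jordan disk and $g:U^*\to g(U^*)$ need not be proper; as in the paper's proof one should instead take a new Jordan disk $W^*$ (a component of $g^{-1}(V^*)$ close to $U^*$, so $g(W^*)=V^*$ and $0\in W^*$), a routine repair rather than a genuine gap.
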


\begin{proof}
Since $0$ is non-repelling, then, by the Fatou-Shishikura inequality,
the critical point of the quadratic counterpart of $f^*$ cannot escape.
Hence $K^*$ is connected. Let $f^*:U^*\to V^*, f^*=f|_{U^*}$ be the
associated quadratic-like map. If $g$ is very close to $f$ then $0\in
U^*$ and, moreover, we can arrange for a new Jordan disk $W^*$ with $g(W^*)=V^*$.
By the above, the associated quadratic-like Julia
set $B^*$ is connected. Finally, if $f_i\to f$ are polynomials with $0$
as an  attracting fixed point then, by the above, for large $i$ the
polynomial $f_i$ has a quadratic-like filled Julia set coinciding with
the closure of the basing of immediate attraction of $0$. Therefore,
$[f_i]\notin \phd_3$ for large $i$ as desired.
\end{proof}

Call a cubic polynomial $f\in\Fc_{nr}$ \emph{immediately
renormalizable} if there are Jordan domains $U^*$ and $V^*$ such that
$0\in U^*$, and $f^*=f:U^*\to V^*$ is a quadratic-like map; denote by
$K^*$ the filled quadratic-like Julia set of $f^*$ (in the future we
\emph{always} use the notation $U^*,$ $V^*,$ $f^*$ and $K^*$ when talking about
immediately renormalizable maps). Denote the set of all \textbf{im}mediately
\textbf{r}enormalizable polynomials by $\imr$, and let $\imr_\la=\Fc_\la\cap
\imr$. Let $\Pc$ be the set of polynomials $f\in\Fc_{nr}$ with
the following property: there are polynomials $g\in\Fc$ arbitrarily
close to $f$ with $|g'(0)|<1$ such that $[g]\in\phd_3$. Then clearly
$[f]\in\ol\phd_3$ (observe, that there may be polynomials outside of
$\Pc$ whose classes are also in $\ol\phd_3$). Also, set
$\Pc_\la=\Pc\cap \Fc_\la$. Corollary \ref{c:cnct} follows from
Lemma \ref{l:cnct}.

\begin{cor}\label{c:cnct} If $f\in \imr$, then $K^*$ is connected.
The set $\imr$ is open in $\Fc_{nr}$. The set $\imr_\la$ is open in
$\Fc_\la$ for any $\la, |\la|\le 1$. The sets $\imr$ and $\Pc$ are
disjoint.
\end{cor}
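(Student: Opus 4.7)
The plan is to deduce each of the four assertions as an essentially direct consequence of Lemma \ref{l:cnct}, applied to $f\in\imr$ with its associated data $(U^*,V^*,f^*)$ and $K^*$.

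For connectedness of $K^*$, I would simply observe that $f\in\Fc_{nr}$ makes $0$ a non-repelling fixed point of $f$, and the definition of $\imr$ places $0$ inside $K^*$. So the hypothesis of the first sentence of Lemma \ref{l:cnct} is met and the claim follows immediately.

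For openness of $\imr$ in $\Fc_{nr}$, my plan is to invoke the second sentence of Lemma \ref{l:cnct}: it produces, for every $g\in\Fc_{nr}$ sufficiently close to $f$, a Jordan disk $W^*$ with $g(W^*)=V^*$ on which $g$ is quadratic-like with filled Julia set containing $0$. This is precisely what $g\in\imr$ means, so an entire $\Fc_{nr}$-neighborhood of $f$ lies in $\imr$. Intersecting this neighborhood with $\Fc_\la$ immediately yields openness of $\imr_\la$ in $\Fc_\la$ for every $\la$ with $|\la|\le 1$.

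The only point that requires a little care is the disjointness $\imr\cap\Pc=\emptyset$, which I would argue by contradiction. Assume $f\in\imr\cap\Pc$. By the definition of $\Pc$ there is a sequence $g_i\in\Fc$ with $g_i\to f$, $|g_i'(0)|<1$, and $[g_i]\in\phd_3$. The inequality $|g_i'(0)|<1$ places $g_i$ inside $\Fc_{nr}$, and by the openness of $\imr$ in $\Fc_{nr}$ just proved, we have $g_i\in\imr$ for all large $i$. But then $0$ is an \emph{attracting} fixed point of $g_i$, so the last clause of Lemma \ref{l:cnct} (used with $f_i=g_i$) forces $[g_i]\notin\phd_3$ for all large $i$, contradicting the choice of $g_i$.

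I expect no substantive obstacle since the whole corollary is a repackaging of Lemma \ref{l:cnct}. The only subtlety worth flagging is that the approximating sequence coming from the definition of $\Pc$ a priori lives in $\Fc$ and not necessarily in $\Fc_{nr}$, so one must use the explicit bound $|g_i'(0)|<1$ to drop it into $\Fc_{nr}$ before applying openness.
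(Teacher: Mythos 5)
Your proposal is correct and follows exactly the route the paper intends: the paper itself offers no details beyond the remark that the corollary ``follows from Lemma \ref{l:cnct}'', and your write-up simply fills in that deduction (connectedness from the first claim, openness from the perturbation claim, and disjointness from the attracting-case clause). Your flagged subtlety---using $|g_i'(0)|<1$ to place the approximating polynomials from the definition of $\Pc$ into $\Fc_{nr}$ before invoking the lemma---is handled correctly.
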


We want to study the sets $\imr$ and $\Pc$; Corollary \ref{c:cnct} shows
that they are disjoint, so this investigation may be done in parallel.
In fact a lot is known about the sets $\imr_\la$ and $\Pc_\la$ in the
case when $|\la|<1$. Namely, P. Roesch proved the following theorem.

\begin{thm}[\cite{roe06}]\label{t:roesch}
$\Pc_\la$ is a Jordan disk for any $\la$ with $|\la|<1$.
\end{thm}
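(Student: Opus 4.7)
The plan is to identify $\Pc_\la$ with the closure of $\phd_3\cap\Fc_\la$, uniformize the interior by a dynamical coordinate, and then extend this uniformization to a homeomorphism of the closed disk via parapuzzle rigidity. The identification is soft: by definition $f\in\Pc_\la$ is a limit of $g\in\Fc$ with $|g'(0)|<1$ and $[g]\in\phd_3$; since $|\la|<1$, small perturbations keep us in the open region $\{|g'(0)|<1\}$, so openness of $\phd_3$ together with continuity of $g\mapsto [g]$ yield $\Pc_\la=\overline{\phd_3\cap\Fc_\la}$.

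For the interior, pick $f=f_{\la,b}\in\phd_3\cap\Fc_\la$. Both critical points then lie in the immediate basin $\Omega(f)$ of $0$, which is simply connected. Let $\phi_f$ be the Koenigs linearization of $f|_{\Omega(f)}$ (the B\"ottcher coordinate if $\la=0$), normalized so that the ``shallower'' critical value sits on the outer boundary of a fixed fundamental annulus of the linear model $w\mapsto\la w$. The position of the ``deeper'' critical value in the chart $\phi_f$ then defines a holomorphic map $\Psi : \phd_3\cap\Fc_\la\to\disk$. A standard quasiconformal surgery---replacing the dynamics on $\Omega(f)$ with the linear model and prescribing the position of critical values while keeping exterior dynamics fixed---shows $\Psi$ is a biholomorphism onto $\disk$, so the interior of $\Pc_\la$ is already a conformal disk.

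For the boundary, use Branner--Hubbard/Yoccoz parapuzzles adapted to the slice $\Fc_\la$. Attach to each $f\in\partial\Pc_\la$ a combinatorial tableau recording the itinerary of the free critical orbit relative to dynamical puzzle pieces built from external rays landing at the repelling periodic points on $\partial\Omega(f)$. These pieces move holomorphically with the parameter, producing nested parapuzzle pieces in $\Fc_\la$ and a continuous argument map $\partial\Pc_\la\to\uc$. To conclude that $\partial\Pc_\la$ is a Jordan curve one needs (i) shrinking of parapuzzle pieces, i.e.\ parameter rigidity, and (ii) injectivity of the combinatorial map; continuity and surjectivity are comparatively soft.

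The hard part is (i). Along $\Fc_\la$ the multiplier $\la$ is fixed, so the single remaining complex parameter effectively tracks the position of the free critical point. Yoccoz's a priori bounds then transfer, via the holomorphic motion of puzzle pieces, to parapuzzle shrinking at non-renormalizable boundary parameters; at parameters where a further (secondary) renormalization emerges, the Straightening Theorem reduces slice rigidity to the classical rigidity of quadratic polynomials of the appropriate renormalization depth. The remaining technicalities---Riemann--Hurwitz bookkeeping along $\Fc_\la$ and landing of parameter rays in cubic slices---are standard in this setting and together close the argument.
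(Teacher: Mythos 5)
You should first note that the paper does not prove this statement at all: it is imported verbatim from Roesch \cite{roe06}, so the only meaningful comparison is with that (long and technical) external proof. Your outline does follow the same general strategy as Roesch --- conformal parametrization of the interior by a dynamical invariant, then parapuzzles/parameter rigidity to get a Jordan curve boundary --- but as a proof it has genuine gaps exactly where the difficulty of \cite{roe06} lies. First, even the preliminary identification $\Pc_\la=\ol{\phd_3\cap\Fc_\la}$ is not purely soft with the paper's definition: the approximating polynomials $g$ are allowed to range over the full two-parameter family $\Fc$, not the slice $\Fc_\la$, so you must argue that approximability from $\Fc$ implies approximability within the slice (openness of the $\phd_3$-region does not immediately give this, since the size of the ball around $g_n$ inside that region may shrink faster than the distance from $g_n$ to the slice). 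Second, your puzzle construction presupposes that external rays land at repelling periodic points on $\partial\Omega(f)$ in a way that cuts out puzzle pieces; for these slices this rests on knowing that the boundary of the immediate basin is locally connected (in fact a Jordan curve), which is itself a nontrivial theorem (Faught, Roesch) and cannot be waved in as ``standard''.

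The most serious problem is your step (i). You assert that at boundary parameters where a secondary renormalization appears, ``the Straightening Theorem reduces slice rigidity to the classical rigidity of quadratic polynomials of the appropriate renormalization depth.'' There is no such classical rigidity: rigidity of quadratic polynomials at arbitrary (in particular infinitely) renormalizable or Cremer/Siegel parameters is precisely the content of the open MLC-type conjectures, so this reduction proves nothing as stated. Roesch's actual argument does not go through quadratic rigidity; she proves shrinking of the parameter puzzle pieces (equivalently, landing and separation of the accesses to $\partial\Pc_\la$) by case analysis on the behaviour of the free critical orbit, with the renormalizable/recurrent cases handled by arguments specific to the slice rather than by straightening plus rigidity. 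Similarly, the transfer of dynamical Yoccoz bounds to parapuzzle shrinking is not automatic ``via holomorphic motion''; it requires a genuine parameter--phase comparison. So while the architecture of your sketch is the right one, the content that makes the theorem true is missing, and one step (quadratic rigidity) would fail if taken literally.
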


We want to combine this result with \cite{bopt16a} where sufficient
conditions on polynomials for being immediately renormalizable are
given.

\begin{thm}[\cite{bopt16a}]\label{t:when-imr}
If $f\in \Fc_\la, |\la|\le 1$ belongs to the unbounded complementary
domain of $\Pc_\la$ in $\Fc_\la$, then $f$ is immediately
renormalizable.
\end{thm}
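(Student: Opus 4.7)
The plan is to apply a topological clopen argument inside the unbounded complementary component $\Uc_\la$ of $\Pc_\la$ in $\Fc_\la$. By Corollary~\ref{c:cnct}, $\imr_\la$ and $\Pc_\la$ are disjoint and $\imr_\la$ is open in $\Fc_\la$, so $\imr_\la\cap \Uc_\la$ is automatically open in $\Uc_\la$. Since $\Uc_\la$ is connected (as a complementary domain of a compact set in the affine line $\Fc_\la$), the theorem will follow once I show that $\imr_\la\cap \Uc_\la$ is both non-empty and closed in $\Uc_\la$.

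For non-emptiness, I would exhibit explicit immediately renormalizable polynomials deep inside $\Uc_\la$. For $f=f_{\la,b}$ with $|b|$ large, a direct computation shows that the two critical points of $f$ are approximately $-\la/(2b)$ (close to $0$) and $-2b/3$ (close to infinity). Choosing a round disk $U^*$ around $0$ of a radius intermediate between these two scales, one verifies that $f(U^*)=V^*$ is a Jordan disk with $\ol{U^*}\subset V^*$, that exactly one critical point of $f$ lies in $U^*$, and that $f|_{U^*}\to V^*$ is a proper map of degree two. Hence $f\in\imr_\la$. The remaining critical point escapes any fixed bounded neighborhood of $0$ under iteration, preventing $f$ from being a limit of polynomials whose classes lie in $\phd_3$, so $f\in \Uc_\la$.

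The main obstacle is the closedness. Suppose $f_n\in\imr_\la\cap \Uc_\la$ converge to some $f\in \Uc_\la$, with associated quadratic-like data $(U_n^*,V_n^*,f_n^*)$ and filled Julia sets $K_n^*\ni 0$. I need to produce $(U^*,V^*,f^*)$ for $f$. The delicate point is that the moduli $\mathrm{mod}(V_n^*\sm \ol{U_n^*})$ could \emph{a priori} collapse, causing the quadratic-like structure to degenerate in the limit. Here the hypothesis $f\in \Uc_\la$ is used decisively: since $f\notin \Pc_\la$, $f$ cannot be approximated by classes in $\phd_3$, which in turn prevents the second critical point of $f$ (the one outside $K^*$) from accumulating on the incipient quadratic-like Julia set in a way that would force the annuli to pinch. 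Using this separation one bounds $\mathrm{mod}(V_n^*\sm \ol{U_n^*})$ from below uniformly, passes to a subsequential Carath\'eodory-type limit of the quadratic-like maps, and obtains a quadratic-like restriction $f^*\colon U^*\to V^*$ of $f$ with $0\in K^*(f^*)$. Thus $f\in \imr_\la$, proving closedness.

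Finally, combining non-emptiness, openness, and closedness with the connectedness of $\Uc_\la$ yields $\imr_\la\cap \Uc_\la=\Uc_\la$, which is the conclusion of the theorem. The crucial non-routine step is the uniform modulus bound in the closedness argument; everything else is either a direct computation (the large-$|b|$ case) or a formal consequence of Corollary~\ref{c:cnct} and plane topology.
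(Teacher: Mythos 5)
First, a point of reference: the paper does not prove Theorem \ref{t:when-imr} at all --- it is quoted from \cite{bopt16a} --- so there is no in-paper argument to compare yours with; your proposal has to stand on its own as a proof of the cited result. Its skeleton (openness of $\imr_\la$ from Corollary \ref{c:cnct}, connectedness of the unbounded complementary domain, non-emptiness for $|b|$ large via an explicit quadratic-like restriction around the small critical point) is fine and essentially routine.

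The genuine gap is the closedness step, and it is not a technical detail but the entire content of the theorem. You take $f_n\in\imr_\la$ converging to $f$ in the unbounded complementary domain and assert that, because $f\notin\Pc_\la$, the moduli $\mathrm{mod}(V_n^*\sm\ol{U_n^*})$ admit a uniform lower bound. No mechanism is given for this implication, and none is obvious: a priori the quadratic-like restrictions can degenerate (the annuli pinch, or equivalently the sets $K_n^*$ grow until they swallow the second critical point in the limit) without any visible reason for $f$ to be approximable by polynomials $g$ with $|g'(0)|<1$ and $[g]\in\phd_3$. To make your argument work you would have to prove the contrapositive statement ``if every choice of quadratic-like restriction around $0$ degenerates along the sequence, then $f\in\Pc_\la$ (or at least $f$ lies in a bounded complementary domain of $\Pc_\la$)'', and that is precisely the hard assertion established in \cite{bopt16a}; note also that the hardest case is $|\la|=1$, where there is no attracting basin to anchor a compactness argument and where Lemma \ref{l:cnct}-type perturbation statements give you nothing in the limit. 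As written, the step is circular in spirit: you invoke the conclusion's difficulty-free version (``non-approximability by $\phd_3$ prevents pinching'') as if it were a known lemma. Separately, even granting a uniform modulus bound, you would still need to argue that the limiting quadratic-like map is a restriction of $f$ itself with $0$ in its filled Julia set and with the other critical point outside the domain; you conflate this with the modulus bound. So the clopen strategy is not salvageable without importing essentially the full strength of the result you are trying to prove; the known proof instead constructs the renormalization for the given $f$ directly from the structure of the slice (rays, cuts and wakes), rather than by a connectedness argument in parameter space.
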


Combining these theorems and Lemma \ref{l:cnct}, we get Corollary
\ref{c:<1}.

\begin{cor}\label{c:<1}
If $|\la|<1$ then $\imr_\la=\C\sm \Pc_\la$.
\end{cor}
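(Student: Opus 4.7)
The plan is to combine Corollary \ref{c:cnct} with the two cited theorems, \ref{t:roesch} and \ref{t:when-imr}, in a straightforward way; the corollary is really just an assembly of ingredients already in place. First I would establish the inclusion $\imr_\la \subseteq \Fc_\la\sm\Pc_\la$. This is immediate from Corollary \ref{c:cnct}, which asserts that $\imr$ and $\Pc$ are disjoint subsets of $\Fc_{nr}$: intersecting both sides with the slice $\Fc_\la$ yields $\imr_\la \cap \Pc_\la = \emptyset$, and since $\imr_\la \subset \Fc_\la$ this is the desired inclusion.

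For the reverse inclusion $\Fc_\la\sm\Pc_\la \subseteq \imr_\la$, I would first identify $\Fc_\la$ with $\C$ via the parameter $b$. By Theorem \ref{t:roesch}, since $|\la|<1$, the set $\Pc_\la$ is a Jordan disk in $\Fc_\la \cong \C$; being the set of parameters with a certain closed-type property (limits of approximating polynomials), $\Pc_\la$ is in particular closed and bounded (the latter because it embeds into $\ol{\phd_3}$ via $f\mapsto [f]$, which is proper on the slice). Hence its complement $\Fc_\la\sm\Pc_\la$ is a single open, connected, unbounded region, i.e., the unique \emph{unbounded complementary domain} of $\Pc_\la$ in $\Fc_\la$ referenced in Theorem \ref{t:when-imr}. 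That theorem then says every polynomial in this region is immediately renormalizable, producing $\Fc_\la\sm\Pc_\la \subseteq \imr_\la$.

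Combining the two inclusions gives the equality $\imr_\la = \Fc_\la \sm \Pc_\la$, written as $\C\sm\Pc_\la$ under the parameter identification. There is no real obstacle; the only point to double-check is the purely topological fact that a closed Jordan disk in $\C$ has a single, unbounded complementary domain, so that Theorem \ref{t:when-imr} indeed applies to every point of $\Fc_\la\sm\Pc_\la$ rather than only to an unbounded component of it.
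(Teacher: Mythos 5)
Your proposal is correct and matches the paper's argument: the paper proves Corollary \ref{c:<1} precisely by combining Theorem \ref{t:roesch}, Theorem \ref{t:when-imr}, and Lemma \ref{l:cnct} (via the disjointness in Corollary \ref{c:cnct}), exactly as you do. You merely spell out the routine details (closedness of $\Pc_\la$ and the fact that the complement of a Jordan disk is its single unbounded complementary domain) that the paper leaves implicit.
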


We study the neutral case $|\la|=1$; to obtain more general results, as much
as possible we study neutral slices without using specifics of the number $\la$.

\begin{lem}\label{l:lambda<0}
If $|\la|<1$ then $[z^3+\la z]\in \phd_3$.
\end{lem}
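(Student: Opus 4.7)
The polynomial $f(z) = z^3 + \la z$ is odd: writing $\si(z) = -z$, one has $f \circ \si = \si \circ f$. Since $f'(0) = \la$ with $|\la| < 1$, the origin is an attracting fixed point of $f$; let $A_0$ denote its immediate basin of attraction. The plan is to show that \emph{both} critical points of $f$ lie in $A_0$, for by the equivalent characterization of $\phd_3$ recalled in Section~\ref{ss:imre-vs-hyp} this will give $[f] \in \phd_3$.

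First I would note that the critical points of $f$ are the solutions of $3z^2 + \la = 0$, hence form a symmetric pair $\{c, -c\}$ interchanged by $\si$. If $\la = 0$, the two critical points coalesce at the super-attracting fixed point $0$ itself and there is nothing to prove, so I may assume $\la \neq 0$. By the classical Fatou theorem, the immediate basin $A_0$ of the attracting fixed point $0$ contains at least one critical point; call it $c$.

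Next I would verify that $A_0$ is $\si$-invariant. Writing $B_0$ for the full basin of $0$, the oddness of $f$ gives $\si(B_0) = B_0$: if $f^n(z) \to 0$, then $f^n(\si(z)) = \si(f^n(z)) \to \si(0) = 0$. Since $\si$ is a homeomorphism fixing $0$, it permutes the connected components of $B_0$ and must fix the one containing $0$, which is $A_0$. Hence $-c = \si(c) \in \si(A_0) = A_0$.

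Combining these two observations, both critical points of $f$ lie in the immediate basin of the same attracting fixed point $0$, so $[f] \in \phd_3$. I do not anticipate a real obstacle here: the argument reduces entirely to the oddness symmetry of $f$ together with the standard fact that every immediate attracting basin captures a critical point.
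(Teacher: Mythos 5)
Your proof is correct and follows essentially the same route as the paper's: use the odd symmetry $f(-z)=-f(z)$ to see that the immediate basin of the attracting fixed point $0$ is centrally symmetric, invoke Fatou's theorem to place one critical point $c$ in it, and conclude by symmetry that $-c$ lies there too, so both critical points sit in the immediate basin of $0$ and $[f]\in\phd_3$. The only difference is your explicit (and harmless) treatment of the degenerate case $\la=0$, which the paper leaves implicit.
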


\begin{proof}
We claim that $J(f)$ is a Jordan curve. Let $U$ be the basin
of immediate attraction of $0$ (which is an attracting fixed point of $f$). Since $f^n(-z)=-f^n(z)$ for every $n$ then $U$ is centrally
symmetric with respect to $0$. Since there exists a
critical point $c\in U$ and $f'(-z)=f'(z)$ for any $z$, then
$-c$ is critical. The central symmetry of $U$ with respect to $0$ now implies
that $-c\in U$. Since both critical points of $f$
belong to $U$, the claim follows.
\end{proof}

Theorem \ref{l:0in} develops Lemma \ref{l:lambda<0}.

\begin{thm}\label{l:0in}
%For any $\la, \la\le 1$, we have $0\in \inn(\Pc_\la)$.
For any $\la, |\la|<1$, we have $0\in \inn(\Pc_\la)$.
For any $\la, |\la|\le 1$, we have $0\in \Pc_\la$, and $\Pc_\la$ is a continuum.
\end{thm}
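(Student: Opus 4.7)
The plan is to treat $|\la|<1$ and $|\la|=1$ separately, since for the former Roesch's Theorem \ref{t:roesch} supplies most of the content, while the latter requires a limiting argument built on top of the $|\la|<1$ case.

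For $|\la|<1$, Lemma \ref{l:lambda<0} gives $[f_{\la,0}]=[z^3+\la z]\in\phd_3$. Since $\phd_3$ is an open subset of the parameter space (hyperbolicity is open, and for hyperbolic maps the condition that both critical points lie in the same immediate basin is open), there is an open neighborhood $N$ of $f_{\la,0}$ in $\Fc$ with $[g]\in\phd_3$ for every $g\in N$. For any $g\in N\cap\Fc_\la$ the map $g$ itself witnesses membership in $\Pc$ (one has $|g'(0)|=|\la|<1$ and $[g]\in\phd_3$), so $N\cap\Fc_\la\subset\Pc_\la$ and $0\in\inn(\Pc_\la)$. The continuum statement in this case is immediate from Theorem \ref{t:roesch}.

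For $|\la|=1$, choose $\la_n\to\la$ with $|\la_n|<1$. The polynomials $f_{\la_n,0}=z^3+\la_n z$ satisfy $[f_{\la_n,0}]\in\phd_3$ (Lemma \ref{l:lambda<0}) and $|f_{\la_n,0}'(0)|<1$, and converge to $f_{\la,0}$; by the definition of $\Pc$ this yields $f_{\la,0}\in\Pc$, i.e., $0\in\Pc_\la$. To prove that $\Pc_\la$ is a continuum I would establish the identity
\[
\Pc_\la=\bigcap_{n\ge 1} A_n,\qquad A_n=\ol{\bigcup_{|\mu-\la|<1/n,\,|\mu|<1}\Pc_\mu}.
\]
The inclusion $\Pc_\la\subset\bigcap_n A_n$ follows by unpacking the definition of $\Pc$: any $f_{\la,b}\in\Pc$ is a limit of polynomials $g_k=f_{\mu_k,c_k}$ with $[g_k]\in\phd_3$ and $|\mu_k|<1$, and each $c_k$ lies in $\Pc_{\mu_k}$ (witnessed by $g_k$ itself), so $b\in A_n$ for every $n$. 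The reverse inclusion is a diagonal argument: if $b\in\bigcap_n A_n$, then for each $n$ one finds $(\mu_n,c_n)\to(\la,b)$ with $|\mu_n|<1$ and $c_n\in\Pc_{\mu_n}$, and since $f_{\mu_n,c_n}\in\Pc$ is itself a limit of $\phd_3$-polynomials with derivative of modulus less than $1$ at the origin, a diagonal selection produces a sequence converging to $f_{\la,b}$ that realizes the defining property of $\Pc$.

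To extract the continuum statement from this identity I would note that by the first part of the theorem (just established), every $\Pc_\mu$ with $|\mu|<1$ contains $0$ and, by Theorem \ref{t:roesch}, is a Jordan disk. Hence the union inside $A_n$ is a union of continua sharing the point $0$ and is connected; its closure $A_n$ is compact (it lies inside the cubic connectedness locus $\Mc_3\subset\Fc\cong\C^2$, which is compact) and connected. The sequence $\{A_n\}$ is nested decreasing and each $A_n$ contains $0$, so $\Pc_\la$ is a nonempty nested intersection of continua and hence itself a continuum. The main obstacle is carrying out the reverse inclusion via the diagonal argument cleanly: it is bookkeeping rather than a deep difficulty, but one must avoid conflating approximability of $f_{\la,b}$ by $\phd_3$-polynomials with approximability of $b$ by parameters in the slices $\Pc_\mu$ for $|\mu|<1$.
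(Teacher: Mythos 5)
Your proof is correct and takes essentially the same route as the paper: the interior statement for $|\la|<1$ comes from Lemma \ref{l:lambda<0} together with openness of $\phd_3$, and for $|\la|=1$ you identify $\Pc_\la$ with $\limsup_{\tau\to\la,\,|\tau|<1}\Pc_\tau$ (your $\bigcap_n A_n$ is exactly this lim sup, an identity the paper asserts without proof and you verify via the diagonal argument), and then conclude connectedness from the common point $0$. The only nitpicks are cosmetic: the sets $\Pc_\mu$ and the unions inside $A_n$ should be called connected sets rather than continua before taking closures, and boundedness of $A_n$ is better justified by uniform boundedness of the slices $\Cc_\mu$ for $|\mu|\le 1$ than by viewing $\Mc_3$ (a set of classes) as a compact subset of $\Fc$.
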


\begin{proof}
The first claim is proven in Lemma \ref{l:lambda<0}.
To prove the rest, observe that if $|\la|=1$ then $\Pc_\la=\limsup
\Pc_\tau$ where $\tau\to \la, |\tau|<1$. Since $0\in
\inn(\Pc_\tau)$ for all these numbers $\tau$, then $0\in
\Pc_\la$, and $\Pc_\la$, being the lim sup
of the continua $\Pc_\tau$ which all share a common point $0$, is also
a continuum as claimed.
\end{proof}

\subsection{The structure of the slice $\Fc_\la$}\label{ss:stru-fla}

Following \cite{bot16}, define the set $\cuc_\la, |\la|\le 1$ as the
set  of all polynomials $f\in \Fc_\la$ with connected Julia sets and
such that the following holds:

\begin{enumerate}

\item $f$ has no repelling periodic cutpoints in $J(f)$;

\item $f$ at most one non-repelling cycle not equal to $0$, and
all its points have multiplier $1$.

\end{enumerate}

The set $\cuc_\la$ is a centerpiece, literally and figuratively, of
the $\la$-slice $\Cc_\la$ of the cubic connectedness locus. A big role
in studying polynomials from $\Cc_\la$ is played by studying properties
of the quadratic polynomial $z^2+\la z$ whose fixed point $0$
has multiplier $\la$. Aiming at most general results, we consider the general case
of $\la$ with irrational argument without any extra-conditions. %Except
%for \cite{bot16} that applies to \emph{all} numbers
%$\la$, the authors are not aware of similar results.
To state some theorems proven earlier %in \cite{bot16a} (and \cite{BOPT})
we need new notions.
For a closed subset $A\subset \uc$ of at least 3 points,
call its convex hull $\ch(A)$ a \emph{gap}. Given a chord $\ell=\ol{ab}$ of the unit circle with
endpoints $a$ and $b$, set
$\si_3(\ell)=\ol{\si_3(a)\si_3(b)}$ (we abuse the notation and identify the angle-tripling
map $\si_3:\R/\Z$ with the map $z^3:\uc\to \uc$; similarly we treat the map $\si_2$). For a closed set $A\subset \uc$, call
each complementary arc of $A$ a \emph{hole} of $A$. Given a compactum $A\subset \C$ let the
\emph{topological hull} $\thu(A)$ be the complement to the unbounded complementary domain of $A$.

\begin{comment}

If $\la$ is ``good'' (e.g., $|\la|<1$ or the
argument of $\la$ has nice number theoretic properties such as
rationality etc) then one can use well-known properties of the
polynomial $z^2+\la z$ and make far-reaching conclusions concerning the
structure of $\Cc_\la$ and $\cuc_\la\subset \Cc_\la$ (in particular by
studying immediately renormalizable polynomials).

\end{comment}

%5(such
%as \emph{Brjuno condition}, multipliers $\la$ with irrational arguments
%of \emph{bounded type} etc)
%to which these methods do not apply.

\subsubsection{Family of invariant quadratic gaps}\label{sss:family}

%Let us list some results of \cite{BOPT} and
%\cite{bot16a}.
Let us discuss properties of \emph{quadratic}
$\si_3$-invariant gaps \cite{BOPT}.
For our purposes it suffices to consider gaps $G$ such that
$G\cap \uc$ has no isolated points.
``Invariant'' means that an edge of a gap $G$ maps to an edge of $G$, or to
a point in $G\cap \uc$;
``quadratic'' means that after collapsing holes %edges
of $G$
the map $\si_3|_{\bd(G)}$ induces a locally strictly monotone two-to-one map of the unit
circle to itself that preserves orientation and has no critical points.
For convenience, normalize the length of the circle so that it equals
$1$. Let $\Vf$ be a quadratic $\si_3$-invariant gap with no isolated
points. Then there is a unique arc $I_{\Vf}$ (called the \emph{major
hole} of $\Vf$) complementary to $\Vf\cap \uc$ whose length is greater
than or equal to $1/3$; the length of this arc is at most $1/2$. The
edge $M_{\Vf}$ of $\Vf$ connecting the endpoints of $I_{\Vf}$ is called the
\emph{major} of $\Vf$. If $M_{\Vf}$ is
critical then itself and $\Vf$ are said to be of \emph{regular critical
type}; if $M_{\Vf}$ is periodic then itself and $\Vf$ are said to be of
\emph{periodic type}. Collapsing edges of $\Vf$ to points, we construct
a monotone map $\tau:\Vf\to \uc$ that semiconjugates
$\si_3|_{\bd(\Vf)}$ and $\si_2:\uc\to \uc$.

The map $\tau$ is uniquely defined by the fact that it is monotone and
semiconjugates $\si_3|_{\bd(\Vf)}$ and $\si_2:\uc\to \uc$. Indeed, if
there had been another map $\tau'$ like that then there would have
existed a non-trivial orientation preserving homeomorphism of the
circle to itself conjugating $\si_2$ with itself. However it is easy to
see that the only such map is the identity (recall, that $\si_2$ has a unique fixed point).

The family of all invariant quadratic gaps can be parameterized (see
\cite{BOPT}). Namely, by Lemmas 3.22 and 3.23 of \cite{BOPT}, there
exists a Cantor set $Q\subset\uc$ such that if we
collapse every hole of $Q$ to a point, we obtain a topological circle
whose points are in one-to-one correspondence with \emph{all} quadratic
invariant gaps $\Uf$ such that $\Uf\cap \uc$ is a Cantor set. Moreover,
the following holds:

\begin{enumerate}

\item for each point $\ol\theta\in\uc$ of $Q$ that is not an
    endpoint of a hole of $Q$, the critical chord
    $\ol{(\theta+1/3)(\theta+2/3)}$ is the major of a quadratic
    invariant gap $\Uf$ such that $\Uf\cap \uc$ is a Cantor set;

\item for each hole $(\ol\theta_1,\ol\theta_2)$ of $Q$ the chord
    $\ol{(\theta_1 + 1/3)(\theta_2 + 2/3)}$ is the periodic major
    of a quadratic invariant gap $\Uf$ such that $\Uf\cap \uc$ is a
    Cantor set.

\end{enumerate}

Thus, to choose the \emph{tag} of a quadratic invariant gap $\Vf$ we
first take its major $M_\Vf$ and then choose the edge or vertex $\ell$
of $\Vf$ distinct from $M_\Vf$ but with the same image as $M_\Vf$.
Evidently, (1) $\ell$ is an edge of $\Vf$ if $M_\Vf$ is not critical
(and is, therefore, of periodic type), (2) $\ell$ is a vertex of $\Vf$
if $M_\Vf$ is critical (and is, therefore, of regular critical type).

The convex hull $\Qf$ of $Q$ in the plane is called the \emph{Principal
Quadratic Parameter Gap} (see Figure \ref{fig:Qf}). The set $Q$ plays a
somewhat similar role to that of the following set appearing in
quadratic dynamics: the set of arguments of all parameter rays (rays to
the Mandelbrot set) landing at points of the main cardioid. Holes of
$\Qf$ will play an important role. The \emph{period of a hole}
$(\ta_1,\ta_2)$ of $\Qf$ is defined as the period of $\ta_1+1/3$ under
the angle tripling map. This period also equals to the period of
$\ta_2+2/3$ under the angle tripling map. The only period 1 (invariant) holes of
$\Qf$ are $(1/6, 1/3)$ and $(2/3, 5/6)$; these will play a special
role.

\begin{figure}
  \includegraphics[height=6cm]{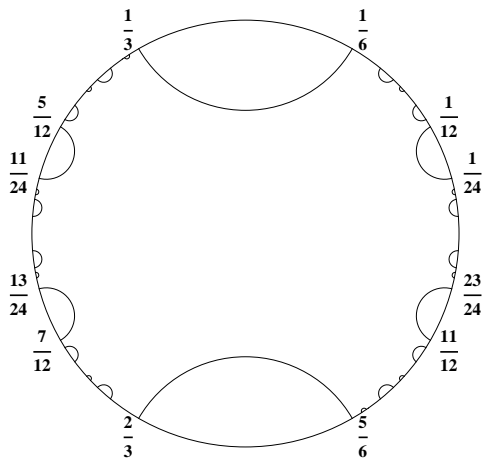}
  \caption{The Principal Quadratic Parameter Gap}
  \label{fig:Qf}
\end{figure}

\subsubsection{Properties of the $\la$-slice}\label{sss:la-slice}

For every polynomial $f\in\Fc_\la$ and every angle $\al\in\R/\Z$, we
will define the \emph{dynamic ray} $R_f(\al)$. Also, for every angle
$\theta$, in the parameter plane of $\Fc_\la$ we define the
\emph{parameter ray} $\Rc_\lambda(\theta)$. We use rays to show that
the picture in $\Fc_\la$ resembles the picture in the parameter plane
of quadratic polynomials.

%Let $\cuc_\la$ be the set of all polynomials $f\in\Fc_\la$ with %$[f]_{FP}\in\cu_{FP}$.

\begin{thm}[Main Theorem of \cite{bot16}]\label{t:main-bot16}

Fix $\la$ with $|\la|\le 1$. The set $\cuc_\la$ is a full continuum.
The set $\Cc_\la$ is the union of $\cuc_\la$ and a countable family of
\emph{limbs} $\Li_H$ of $\Cc_\la$ parameterized by holes $H$ of $\Qf$.
The union is disjoint. %$($except when $\la=1)$.
%$($except for the limbs parameterized by the holes of period $1)$.
For a hole $H=(\theta_1,\theta_2)$ of $\Qf$, the following holds.

\begin{enumerate}

\item The parameter rays $\Rc_\lambda(\theta_1)$ and
    $\Rc_\lambda(\theta_2)$ land at the same point $f_{root(H)}$.

\item Let $\Wc_\lambda(H)$ be the component of $\C\sm
    \ol{\Rc_\lambda(\theta_1)\cup \Rc_\lambda(\theta_2)}$ containing
    the parameter rays with arguments from $H$. Then, for every
    $f\in\Wc_\la(H)$, the dynamic rays $R_f(\theta_1+1/3)$,
    $R_f(\theta_2+2/3)$ land at the same point, either a periodic
    and repelling point for all $f\in\Wc_\la(H)$, or the point $0$ for all
    $f\in\Wc_\la(H)$. Moreover, $\Li_H=\Wc_\lambda(H)\cap \Cc_\la$.

\item The dynamic rays $R_{f_{root(H)}}(\theta_1+1/3)$,
    $R_{f_{root(H)}}(\theta_2+2/3)$ land at the same parabolic
    periodic point, and $f_{root(H)}$ belongs to $\cuc_\la$.
\end{enumerate}
\end{thm}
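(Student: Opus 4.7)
The plan is to imitate the Douady--Hubbard wakes-and-limbs theory for the Mandelbrot set in the slice $\Fc_\la$, using the parameterization of quadratic $\si_3$-invariant gaps by $\Qf$ from Section \ref{sss:family} as the combinatorial backbone. First I would define parameter rays $\Rc_\la(\theta)$ on $\Fc_\la\sm \Cc_\la$ via the Böttcher-type escape rate of the free critical point, and to each $f\in\cuc_\la$ associate its \emph{dynamical} quadratic invariant gap $G(f)$, namely the convex hull of the external arguments of rays landing on the quadratic-like piece containing $0$. The two conditions defining $\cuc_\la$ are calibrated precisely so that $G(f)$ is a well-defined $\si_3$-invariant quadratic gap with no isolated vertices, hence corresponds to a unique point in the Cantor set $Q$; polynomials whose $G(f)$ has a periodic major will form the limbs, polynomials with a regular-critical major will constitute $\cuc_\la$.

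Fix now a hole $H=(\theta_1,\theta_2)$ of $\Qf$, whose major is the periodic chord $\ol{(\theta_1+1/3)(\theta_2+2/3)}$. For (3) I would construct $f_{root(H)}$ directly: start with the quadratic model at the root of the Mandelbrot sub-component prescribed by $H$, lift it to a cubic via Lemma \ref{l:cnct} and a realization/straightening argument inside $\Fc_\la$, and verify that the resulting parameter satisfies both defining conditions of $\cuc_\la$. For (2) I would propagate the co-landing of $R_f(\theta_1+1/3)$ and $R_f(\theta_2+2/3)$ across $\Wc_\la(H)$: the landing point depends holomorphically on $f$ as long as the common cycle stays repelling, so co-landing is both an open condition and, by constancy of combinatorial ray portraits in the repelling regime, a locally closed one; co-landing can therefore break only when the cycle becomes parabolic (at the root) or when $f$ leaves the wake. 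Inside $\Li_H=\Wc_\la(H)\cap\Cc_\la$ the rays must co-land, possibly at $0$ when the common cycle has been absorbed into the non-repelling fixed point.

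Statement (1) is the heart of the theorem and should follow from the parameter/dynamical dictionary: $\Rc_\la(\theta_1)$ and $\Rc_\la(\theta_2)$ can only accumulate on parameters whose dynamics carry the ray pair with the combinatorics prescribed by $H$, and $f_{root(H)}$ is the unique such parameter. The rigorous transfer from dynamical puzzle to parameter puzzle would be implemented via a holomorphic motion of the repelling periodic orbits over $\Wc_\la(H)$, together with the standard wake pinching argument. Granted (1)--(3), fullness and continuum-ness of $\cuc_\la$ follow formally: the limbs $\Li_H$ partition $\Cc_\la\sm\cuc_\la$ into disjoint open sets, each cut off from the rest of $\Cc_\la$ by a pair of parameter rays meeting at the root, so $\cuc_\la$ is obtained from the full continuum $\Cc_\la$ by removing a countable family of disjoint open discs and is therefore a full continuum itself.

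The main obstacle is step (1) when $|\la|=1$. The classical Yoccoz parameter puzzle that underlies the Douady--Hubbard landing theorem rests on linearization at a repelling fixed point, which is not available at the neutral fixed point $0$. My workaround would be to restrict to the immediately renormalizable region, which by Theorem \ref{t:when-imr} covers the unbounded complementary domain of $\Pc_\la$, transfer the needed puzzle estimates through the straightening of $f^*$ to its quadratic counterpart, and invoke the classical quadratic landing theorem there before pushing the conclusion back to $\Fc_\la$. A secondary delicate point is realizing $f_{root(H)}$ simultaneously for all holes of $\Qf$ in a combinatorially coherent way; here one would use Thurston rigidity applied to the quadratic models and continuity of the free-critical orbit inside $\Fc_\la$.
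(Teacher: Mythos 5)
There is nothing in this paper to compare your argument with: Theorem \ref{t:main-bot16} is not proven here at all, it is quoted verbatim as the Main Theorem of \cite{bot16} and used as an external input. So the only question is whether your outline would stand on its own, and as written it would not. The decisive step is exactly the one you flag as ``the main obstacle'': landing of the parameter rays $\Rc_\la(\theta_1)$, $\Rc_\la(\theta_2)$ at a common root in the neutral slices $|\la|=1$. Your proposed workaround --- transfer puzzle estimates through straightening of $f^*$ to the quadratic counterpart and invoke the quadratic landing theorem --- does not go through as stated: straightening is not holomorphic in the parameter (its continuity in families is itself delicate), so landing statements in the quadratic parameter plane do not push back to landing of the rays $\Rc_\la(\theta)$ in $\Fc_\la$; moreover Theorem \ref{t:when-imr} gives immediate renormalizability only in the unbounded complementary domain of $\Pc_\la$, whereas the accumulation sets of the two parameter rays and the prospective root lie on the boundary of the wake, precisely where that information is not available. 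Establishing (1) in the neutral case is the substantive content of \cite{bot16}; asserting that it ``should follow from the parameter/dynamical dictionary'' plus a holomorphic motion of repelling orbits is a restatement of the goal, since the presence of the neutral fixed point $0$ (at which the rays $R_f(\theta_1+1/3)$, $R_f(\theta_2+2/3)$ may land, as item (2) itself allows) is what breaks the standard stability and pinching arguments.

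There is also a structural flaw in your combinatorial setup. You attach to each $f\in\cuc_\la$ a gap $G(f)$ defined as the convex hull of arguments of rays landing on ``the quadratic-like piece containing $0$'', and you let the type of its major sort parameters into $\cuc_\la$ versus limbs. But polynomials in $\cuc_\la$ need not be immediately renormalizable, so that quadratic-like piece may simply not exist; in this paper the gap $\Uf(f)$ of Definition \ref{d:uf} is only defined for immediately renormalizable $f$, and the defining conditions of $\cuc_\la$ (no repelling periodic cutpoints, at most one non-repelling cycle other than $0$, with multiplier $1$) do not by themselves produce an invariant quadratic gap without isolated vertices. Finally, in item (2) your ``co-landing is open and locally closed in the repelling regime'' argument presupposes that the co-landing persists up to the moment the cycle becomes parabolic or is absorbed by $0$; controlling that degeneration across the whole wake, uniformly in a neutral slice, is again part of what must be proven rather than an input. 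In short, the outline reproduces the expected Douady--Hubbard architecture but leaves the analytically hard neutral-slice steps unproven, which is why the present paper imports the theorem from \cite{bot16} instead of proving it.
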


Figure \ref{fig:Fc1d3} shows the parameter slice $\Fc_{e^{2\pi i/3}}$ in which several
parameter rays and several wakes are shown.

\begin{figure}
  \includegraphics[height=8cm]{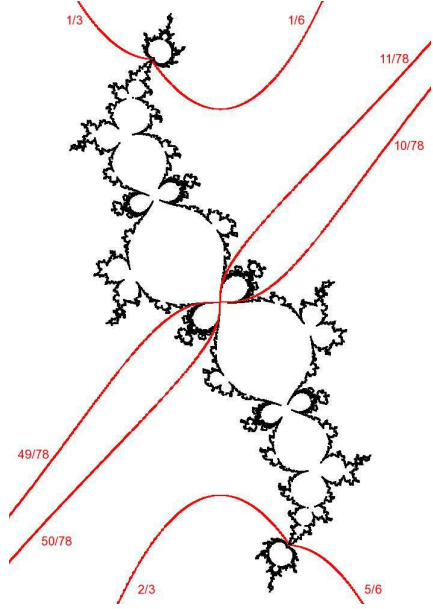}
  \caption{Parameter slice $\Fc_{e^{2\pi i/3}}$ with some parameter rays}
  \label{fig:Fc1d3}
\end{figure}

Given a compact set $A\subset \C$, let the \emph{topological hull of $A$} be the
unbounded complementary domain of $A$.

\begin{thm}[\cite{bot16}]\label{t:cubio}
We have that $\thu(\Pc_\la)\subset \cuc_\la$. The set $\cuc_\la$ is a
full continuum.
\end{thm}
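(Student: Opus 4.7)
The second assertion, that $\cuc_\la$ is a full continuum, is already part of Theorem~\ref{t:main-bot16}, so the content lies in the inclusion $\thu(\Pc_\la)\subset \cuc_\la$. The plan is to first prove $\Pc_\la\subset \cuc_\la$, whereupon fullness of $\cuc_\la$ yields $\thu(\Pc_\la)\subset \thu(\cuc_\la)=\cuc_\la$.

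Invoking the decomposition $\Cc_\la=\cuc_\la\sqcup \bigsqcup_H \Li_H$ from Theorem~\ref{t:main-bot16} and closedness of $\Cc_\la$, it suffices to verify that $\Pc_\la\cap \Li_H=\emptyset$ for every hole $H=(\theta_1,\theta_2)$ of $\Qf$. I would treat the attracting case $|\mu|<1$ first and then pass to $|\la|=1$ by a limit argument. For $|\mu|<1$, Roesch's Theorem~\ref{t:roesch} asserts that $\Pc_\mu$ is a Jordan disk; it is connected, contains $0$, and lies in $\Cc_\mu$ as the closure of hyperbolic polynomials with connected Julia sets. The parameter rays $\Rc_\mu(\theta_1)$, $\Rc_\mu(\theta_2)$ lie in $\C\sm \Cc_\mu$ and, together with their common landing point $f_{root(H)}$, split $\C$ into the open wake $\Wc_\mu(H)$ and its complementary open component, the latter of which contains $0$. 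Since $\Pc_\mu$ cannot cross the rays, it sits entirely in this latter component together with possibly $\{f_{root(H)}\}$, so $\Pc_\mu\cap \Wc_\mu(H)=\emptyset$ and in particular $\Pc_\mu\cap \Li_H=\emptyset$.

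For the neutral case $|\la|=1$, the plan is to use $\Pc_\la=\limsup_{\mu\to\la,\,|\mu|<1}\Pc_\mu$ (from the proof of Theorem~\ref{l:0in}) together with continuous dependence of the parameter rays $\Rc_\mu(\theta_i)$ and the open wakes $\Wc_\mu(H)$ on $\mu$. If some $f\in\Pc_\la$ lay in the open wake $\Wc_\la(H)$, then one could produce $f_\mu\in \Pc_\mu$ with $|\mu|<1$, $\mu$ close to $\la$, and $f_\mu\in \Wc_\mu(H)$, contradicting the attracting case. Hence $\Pc_\la\cap \Li_H=\emptyset$ for every $H$, and $\Pc_\la\subset \cuc_\la$.

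The main obstacle is the continuity of the parameter rays across the critical circle $|\mu|=1$: one needs $\Rc_\mu(\theta_i)$ and the landing point $f_{root(H)}=f_{root(H)}(\mu)$ to depend continuously on $\mu$ at $\mu=\la$, so that $\Wc_\mu(H)$ approaches $\Wc_\la(H)$. This is a consequence of the dynamical construction of parameter rays in~\cite{bot16} together with the holomorphic motion of the repelling periodic landing data defining them, but it is the technical heart of the argument; once in place, $\Pc_\la\subset \cuc_\la$ follows and the theorem is proved.
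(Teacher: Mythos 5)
You should note at the outset that the paper contains no proof of Theorem \ref{t:cubio} to compare against: it is imported verbatim from \cite{bot16}, so your argument has to stand on its own. Your overall reduction is reasonable: the fullness of $\cuc_\la$ is indeed part of Theorem \ref{t:main-bot16}, the passage from $\Pc_\la\subset\cuc_\la$ to $\thu(\Pc_\la)\subset\cuc_\la$ is correct, and in the attracting case $|\mu|<1$ the wake argument can be made to work, although even there the assertion that the polynomial with $b=0$ lies outside every wake is a step you must justify (it follows from Lemma \ref{l:lambda<0} together with Theorem \ref{t:main-bot16}(2): inside a wake two dynamic rays co-land at a repelling periodic point or at $0$, which is impossible when $J$ is a Jordan curve and $0$ is attracting).

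The genuine gap is exactly where you flag it, and it is not a removable technicality: your neutral case rests on continuity (at least lower semicontinuity of the open wakes) of $\Rc_\mu(\theta_1)$, $\Rc_\mu(\theta_2)$, their common landing point $f_{root(H)}$, and hence of $\Wc_\mu(H)$, as $\mu\to\la$ with $|\mu|<1$ and $|\la|=1$. Nothing in the present paper supplies this, and the heuristic you invoke, a holomorphic motion of repelling landing data, fails precisely in the relevant situations: by Theorem \ref{t:main-bot16}(2) the co-landing point attached to a wake may be the fixed point $0$, which is neutral on the slice $|\la|=1$, and the landing point $f_{root(H)}$ of the parameter rays is associated with a parabolic cycle, so no motion of repelling orbits controls the wake boundary there. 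Establishing this stability across the unit circle is comparable in difficulty to the theorem itself; it is part of what \cite{bot16} actually proves. A more robust route, which avoids slice limits and parameter-ray continuity altogether, is dynamical: $\cuc_\la$ is defined by the absence of repelling periodic cutpoints and a multiplier restriction on non-repelling cycles; for $f\in\Pc_\la$ a repelling periodic cutpoint would mean two rational rays co-landing at a repelling periodic point, a configuration stable under small perturbation, contradicting that $f$ is approximated by polynomials in $\phd_3$ whose Julia sets are Jordan curves, while the condition on non-repelling cycles for classes in $\ol\phd_3$ is exactly the content of the main cubioid results in \cite{bopt14}. This gives $\Pc_\la\subset\cuc_\la$ directly for all $|\la|\le 1$, without the continuity statement your argument needs.
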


In this paper we study the set $\cuc_\la\sm \thu(\Pc_\la)$.
By Theorems \ref{t:main-bot16} and \ref{t:cubio}, except
for vertices  $f_{root(H)}$ of parameter wakes $\Wc_\lambda(H)$, no
points of $\cuc_\la$ belong to those parameter wakes.
The only places at which points of $\cuc_\la\sm \thu(\Pc_\la)$ may be
located can be associated with parameter rays $\Rc_\la(\ta)$ where
$\ta\in \Qf$ is a parameter, associated with a regular critical
quadratic invariant gap $\Uf$ with regular critical major $M_{\Uf}$.

\section{Decorations}

The notation below will be used in what follows. Namely, we
assume that $f$ is an immediately renormalizable cubic polynomial; % with
%quadratic-like filled Julia set $K^*$ containing $0$;
recall that by Lemma \ref{l:7.2} its filled Julia set $K^*$ is unique. %Let $K$ be the filled Julia set of
%$f$, and
Let $f:U^*\to V^*$ be a quadratic-like map where $V^*$ is very tight around $K^*$.
Let $\om_1$ be the critical point of $f$ belonging to $K^*$; let
$\om_2$ be the other critical point of $f$ (this notation will be used in what follows) . In order to indicate the
dependence on $f$, we may %sometimes
write $K^*(f)$, $\om_2(f)$, etc.
Observe that in this section we put \emph{no} restrictions on the
rational lamination of $f$. A \emph{pullback} of a
connected set $D$ under $f$ is defined as a connected component of
$f^{-1}(D)$.

\begin{lem}\label{l:pull}
A pullback of a connected set $A$ under $f$ maps onto $A$. In
particular, there exists a pullback $\wt K^*$ of $K^*$ disjoint from
$K^*$ and mapped by $f$ onto $K^*$ in the one-to-one fashion.
\end{lem}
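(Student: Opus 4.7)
The plan is to dispatch the two assertions in turn. The first is a general fact about polynomials viewed as branched coverings; the second is then a concrete degree count in the quadratic-like setup.

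For the first assertion, let $A\subset\C$ be a connected set and $C$ a component of $f^{-1}(A)$. Since $f$ is a polynomial, it is a proper open holomorphic map realizing $\C$ as a branched cover of itself of degree $3$; this is all I would need. In the case relevant here, $A=K^*$ is closed, so $f^{-1}(A)$ is closed and its components are closed as well. Properness of $f$ gives that $f(C)$ is closed in $A$; the same applied to the union of the remaining components shows that $A\sm f(C)$ is also closed in $A$. Hence $f(C)$ is both open and closed in $A$, and since $A$ is connected and $f(C)\ne\0$, we conclude $f(C)=A$.

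For the second assertion I would first identify $K^*$ itself as a component of $f^{-1}(K^*)$. Because $K^*$ is the filled Julia set of $f^*=f|_{U^*}$, the invariance relation $(f^*)^{-1}(K^*)=K^*$ holds, i.e.\ $f^{-1}(K^*)\cap U^*=K^*$. Moreover $K^*\subset U^*$, and $f(\d U^*)=\d V^*$ is disjoint from $K^*\subset V^*$, so $\d U^*\cap f^{-1}(K^*)=\0$; consequently $K^*$ and $f^{-1}(K^*)\sm K^*$ are separated by $\d U^*$, making $K^*$ a component of $f^{-1}(K^*)$.

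I would then carry out the degree count. By the first assertion, every component of $f^{-1}(K^*)$ maps onto $K^*$; pick a generic $y\in K^*$ that is not a critical value of $f$, so $|f^{-1}(y)|=3$. Of these preimages two lie in $K^*$, since $f|_{K^*}$ is quadratic-like of degree $2$, and the remaining one, call it $p$, lies outside $K^*$. If there were two components $\wt K^*_1,\wt K^*_2$ of $f^{-1}(K^*)$ disjoint from $K^*$, each, by the first assertion, would contribute at least one preimage of $y$, forcing $|f^{-1}(y)|\ge 2+1+1=4>3$, a contradiction. Hence there is a unique additional component $\wt K^*\ni p$; it accounts for exactly one preimage of every generic $y\in K^*$, so $f|_{\wt K^*}$ has degree $1$ and is a bijection onto $K^*$. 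Disjointness from $K^*$ is automatic as they are distinct components.

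The main obstacle will be the identification of $K^*$ as a component of $f^{-1}(K^*)$, which rests on the invariance $(f^*)^{-1}(K^*)=K^*$ inside the basic neighborhood and on the topological barrier furnished by $\d U^*$. Once that is in place, the degree count at a generic non-critical value of $K^*$ directly produces the additional pullback $\wt K^*$ and shows it to be mapped bijectively onto $K^*$.
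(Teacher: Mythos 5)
Your handling of the second assertion is essentially the paper's argument and is fine: you observe that $K^*$ is a component of $f^{-1}(K^*)$ (using $f^{-1}(K^*)\cap U^*=K^*$ and the barrier $\partial U^*$ — a point the paper leaves implicit), and then count preimages of a point of $K^*$, two of which are absorbed by the degree-two map $f|_{U^*}$, to get exactly one further component mapping onto $K^*$ with one preimage per point. Only a small touch-up is needed for injectivity at a critical value: argue as the paper does, by multiplicity — if $x\ne y\in\wt K^*$ had the same image $z\in K^*$, then $z$ would have two preimages in $K^*$ (with multiplicity, by the quadratic-like restriction) plus two distinct ones outside, exceeding degree three.

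The genuine gap is in your proof of the first assertion, which the paper does not reprove but imports from Lemma 4.1 of \cite{lt90}. You argue that $f(C)$ is closed in $A$ by properness and that ``the same applied to the union of the remaining components'' makes $A\sm f(C)$ closed, hence $f(C)$ clopen. Neither half of the second step is justified. First, the union $D=f^{-1}(A)\sm C$ of the other components of a compact set need not be closed: components of a compactum can accumulate on $C$, and finiteness of the number of components is a consequence of the lemma, not available beforehand. Second, even granting that $f(D)$ is closed, one only gets $A\sm f(C)\subset f(D)$; the images $f(C)$ and $f(D)$ may overlap, and writing $A$ as a union of two closed sets does not make either of them open in $A$. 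More fundamentally, your argument uses only properness, and properness alone does not yield this ``confluence'' property — openness of $f$ (equivalently, holomorphy and finite degree) must enter somewhere. The standard repair for a continuum $A$ is either to cite the known result for open/holomorphic maps, as the paper does, or to run the neighborhood argument: choose an open $U\supset C$ with $\partial U\cap f^{-1}(A)=\0$, let $W$ be the component of $\C\sm f(\partial U)$ containing $A$ and $V$ the component of $f^{-1}(W)\cap U$ containing $C$; then $f:V\to W$ is proper holomorphic, hence surjective, and shrinking $U$ down to $C$ gives $f(C)=A$. Note finally that you prove the claim only for closed $A$, whereas the lemma asserts it for arbitrary connected sets, and the paper later applies it to decorations, which are connected but not closed, so this restriction is not harmless either.
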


\begin{proof} The first claim of the lemma is proven in Lemma 4.1 of \cite{lt90}.
Moreover, since $K^*$ is a quadratic-like Julia set, it has a pullback
$\wt K^*$ disjoint from itself. By the first claim, $\wt K^*$ is a
continuum that maps \emph{onto} $K^*$. Moreover, $f|_{\wt K^*}$ is
one-to-one. Indeed, otherwise there are points $x\ne y\in \wt K^*$ with
$f(x)=f(y)=z\in K^*$. It follows that $z$ has overall four preimages
(two in $K^*$ and two in $\wt K^*$), a contradiction.
\end{proof}

The notation $\wt K^*$ will be used from now on. Also, from now on
by decorations we mean those of $K$ rel. $K^*$.

%The next definition introduces two types of decorations.

\begin{dfn}
A decoration is said to be \emph{critical} if it contains $\wt K^*$.
Thus there is only one critical decoration denoted $D_c$.
All other decorations are said to be \emph{non-critical}.
\end{dfn}

Let $v_2=f(\om_2)$ be the critical value associated with the point $\om_2$.

\begin{lem}\label{l:v-nin-k} Neither $\om_2$ nor $v_2$ belong to $K^*$. %Also,
%$\omega_2\in D_c$.
\end{lem}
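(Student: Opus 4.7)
The plan is to deduce both conclusions from the degree-$2$ constraint on the quadratic-like map $f^*=f|_{U^*}\colon U^*\to V^*$, combined with an elementary count of preimages with multiplicity.

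First I would show $\om_2\notin K^*$. Since $K^*\subset U^*$, it is enough to establish $\om_2\notin U^*$. The map $f^*$ is a proper holomorphic map of degree $2$ between topological disks, so by Riemann--Hurwitz it has exactly one critical point (counted with multiplicity). The point $\om_1\in K^*\subset U^*$ already provides one such critical point of $f^*$. Since $\om_1$ and $\om_2$ are the only two critical points of the cubic $f$, if $\om_2$ also lay in $U^*$ then $f^*$ would have two distinct critical points, contradicting the above. Hence $\om_2\notin U^*$, and therefore $\om_2\notin K^*$.

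Next I would prove $v_2\notin K^*$ by a fiber count. Assume, for contradiction, that $v_2=f(\om_2)\in K^*\subset V^*$. Globally, $f$ has degree $3$, so the fiber $f^{-1}(v_2)$ has total multiplicity $3$. The critical point $\om_2$ contributes multiplicity $2$ to this fiber. On the other hand, since $v_2\in V^*$ and $f^*\colon U^*\to V^*$ is proper of degree $2$, the preimages of $v_2$ lying in $U^*$ contribute a total multiplicity of $2$. By the previous step $\om_2\notin U^*$, so these two contributions come from disjoint point sets, and the total multiplicity of $f^{-1}(v_2)$ is at least $2+2=4$, exceeding $3$. This contradiction gives $v_2\notin V^*$, and a fortiori $v_2\notin K^*$.

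Both claims reduce to routine degree bookkeeping once the quadratic-like structure is in place, so I do not anticipate any serious obstacle. The only subtle point is to ensure that, in the count for $v_2$, the contribution of $\om_2$ and the contribution of the preimages inside $U^*$ really are disjoint; this is exactly what the first step secures.
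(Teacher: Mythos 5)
Your proposal is correct and follows essentially the same route as the paper: the first claim comes from the fact that the degree-two quadratic-like restriction can carry only one critical point (you make this explicit via Riemann--Hurwitz on $U^*$), and the second is the same multiplicity count showing $v_2\in K^*$ would force at least four preimages, contradicting $f$ being three-to-one.
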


\begin{proof} We have $\om_2\notin K^*$ since $\om_1\in K^*$ and $K^*$ contains at most one critical point.
%by definition of $\om_2$
%and since $K^*$ contains only the critical point $\om_1$.
If $v_2\in
K^*$, then there are two preimages of $v_2$ in $K^*$ and two preimages
of $v_2$ outside of $K^*$ (both numbers take multiplicities into
account). This contradicts $f$ being three-to-one. %The last claim is left to the reader.
\end{proof}

For $x\notin K^*$ let $D(x)$ be the decoration containing $x$; set
$D_v=D(v_2)$ and call it \emph{critical value decoration}. Initial
dynamical properties of decorations are listed in Theorem
\ref{t:decor}. % that is specified in what follows.
Set $L$ to be $\{\om_2\}$ (if $\om_2\in J(f)$), or the closure of the Fatou domain of
$f$ containing $\om_2$ (if any). In Theorem \ref{t:decor} we use the
following \emph{E-construction},
and we use the same notation whenever we implement it.

\noindent \textbf{The E-construction.} %Let $D$ be a decoration.
Draw a
$K$-ray $E$ landing at a periodic repelling point $x\ne f(\omega_1), x\in K^*$. % which is
%not the $f$-image of a critical point.
Construct the two pullback
$K$-rays $E'$ and $E''$ of $E$ landing at distinct points $x',
x''\in K^*$ where $x'\ne x''$ by the choice of $x$.  %($x'\ne x''$ by the choice of $x$).
The set $\C\sm
(K^*\cup E'\cup E'')$ consists of components $Z_1$ and $Z_2$.
Assume that $Z_1$ contains all $K$-rays with arguments from an open arc
$I_1$ of length $1/3$ while $Z_2$ contains all $K$-rays with arguments
from the open arc $I_2$ of length $2/3$ disjoint from $I_1$. Notice
that since $x'\ne x''$ then some periodic repelling points of $K^*$
belong to $\ol{Z_1}$ and are accessible by $K$-rays from within $Z_1$
(the same claim holds for $Z_2$).

\begin{thm}\label{t:decor}
The critical decoration $D_c$ maps onto the entire $K(f)$ while any
other decoration maps onto a decoration in the one-to-one fashion. Any
decoration $D\ne D_v$ has three homeomorphic pullbacks: $D_1$
such that $\ol{D_1}\sm D_1\subset \wt K^*$ and $D_2, D_3$ that are decorations.
$D_i$ itself is a decoration for $i=2, 3$. Decoration $D_v$ has a
homeomorphic pullbacks $D'_v$ which is itself a decoration, and a
pullback $T$ that maps onto $D_v$ in the two-to-one fashion, contains
$\om_2$, is contained in $D_c$, and accumulates to both $K^*$ and $\wt
K^*$.
\end{thm}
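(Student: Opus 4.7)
The plan is to use the E-construction described just above: partition the plane into two open regions via two pullback rays of a chosen $K$-ray, and then track preimages of decorations via degree counting together with Lemma~\ref{l:triv}.

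First I implement the E-construction: pick a $K$-ray $E$ landing at a repelling periodic point $x\in K^*$, $x\neq f(\om_1)$, and pull back to obtain $E',E''$ and regions $Z_1,Z_2$. The map $f$ takes $Z_1$ homeomorphically onto $W=\C\sm(K^*\cup E)$ and takes $Z_2$ onto $W$ as a degree-$2$ branched cover whose only branching is at $\om_2$. In particular $\om_2\in Z_2$, and because $f(Z_1)=W$ is disjoint from $K^*$, we have $\wt K^*\subset Z_2$.

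Next, for a decoration $D\neq D_v$, $v_2\in D_v\neq D$, so the degree-$2$ cover is unramified over $D$. By Lemma~\ref{l:triv} the set $\ol D\cup K^*$ is full, so $v_2$ lies in the unbounded component of $\C\sm\ol D$ and every loop in $D$ has winding number zero about $v_2$. Hence the monodromy of the cover over $D$ is trivial, and $f^{-1}(D)\cap Z_2$ splits into two components, each mapping $1$-to-$1$ to $D$. Adding the unique $Z_1$-preimage, $f^{-1}(D)$ has three connected components, each homeomorphic to $D$. For the $Z_1$-preimage and one of the $Z_2$-preimages, closures accumulate to $K^*$ at the two $f|_{K^*}$-preimages of $\ol D\cap K^*$; using that the $f$-image of a decoration is contained in a single decoration and applying degree-$1$ arguments, these two components are themselves whole decorations $D_2, D_3$. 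The remaining $Z_2$-preimage $D_1$ accumulates only to the unique preimage $\wt C_D\subset \wt K^*$ of $\ol D\cap K^*$ (since $\wt K^*\subset Z_2$ is interior to $Z_2$ and maps to $K^*$ via the homeomorphism $f|_{\wt K^*}$), so $\ol{D_1}\sm D_1=\wt C_D\subset \wt K^*$; then $D_1\cup \wt K^*$ is connected in $K$, so $D_1\subset D_c$.

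For $D=D_v$ the $Z_2$-preimage is a connected $2$-to-$1$ branched cover of $D_v$ (branched at $\om_2$); call it $T$. Thus $\om_2\in T$, and the same closure analysis shows $\ol T$ accumulates both onto $K^*$ (at some $f|_{K^*}$-preimage of $\ol{D_v}\cap K^*$) and onto $\wt C_{D_v}\subset\wt K^*$. Hence $T\cup\wt K^*$ is connected, so $T\subset D_c$. The $Z_1$-preimage $D'_v$ is a homeomorphic decoration by the earlier argument. Finally, $f(D_c)=K$ because $D_c\supset \wt K^*\mapsto K^*$ and $D_c$ contains a pullback of every decoration ($D_1$ or $T$); and for any decoration $D\neq D_c$, $D$ is one of the homeomorphic pullbacks of the decoration $f(D)$, hence $f|_D$ is $1$-to-$1$.

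The main obstacle is the careful separation of the two $Z_2$-preimages of a decoration $D\neq D_v$ into one $K^*$-sided component (a genuine decoration) and one $\wt K^*$-sided component (the subset $D_1$ of $D_c$), together with the analogous claim that $\ol T$ reaches both $K^*$ and $\wt K^*$ in the $D_v$ case. This requires extending the degree-$2$ cover $Z_2\sm \wt K^*\to W$ continuously to closures and matching up the three preimage components of $\ol D\cap K^*$ in $K^*\cup\wt K^*$ (two via the quadratic-like $2$-to-$1$ cover of $K^*$, one via the homeomorphism $f|_{\wt K^*}:\wt K^*\to K^*$) with the three preimage components of $D$; one must verify that the branch point $\om_2$ is precisely what forces exactly one of the $Z_2$-components to ``hide behind'' $\wt K^*$.
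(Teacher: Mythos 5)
Your proposal rests on an opening assertion that is never justified and is in fact the crux of the theorem: namely that $f$ maps $Z_1$ homeomorphically onto $W=\C\sm(K^*\cup E)$ while $Z_2$ carries the degree-two branched part, so that $\om_2\in Z_2$ and $\wt K^*\subset Z_2$. The one inference you offer runs backwards: you deduce $\wt K^*\subset Z_2$ from ``$f(Z_1)=W$ is disjoint from $K^*$,'' but to know that $f|_{Z_1}$ has degree one onto $W$ (rather than degree two, with $\wt K^*$ and the branching hiding in $Z_1$) one must already know where $\wt K^*$ sits, since $Z_1$ is a full component of $f^{-1}(W)$ only if $\wt K^*$ (together with the third pullback of $E$, which lands on it) lies in $Z_2$. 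A priori nothing in the E-construction rules out $\wt K^*\subset Z_1$, and the paper spends two substantial steps exactly here: first showing $\om_2\in D_c$ by a separate cut-and-count argument (a ray from infinity to $v_2$, pulled back through $\om_2$), and then showing $\wt K^*\subset Z_2$ by comparing a $K$-ray accessing a repelling periodic point of $K^*$ from within $Z_1$ with the pullback ray landing at the corresponding point of $\wt K^*$ — two distinct $\si_3$-pullbacks of one ray cannot both have arguments in the open arc $I_1$ of length $1/3$. Without an argument of this kind your covering-structure claim, and everything built on it, is unsupported; this is a genuine gap, not a routine omission.

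Two further points. First, your monodromy step is a genuinely different (and attractive) route to the splitting of $f^{-1}(D)\cap Z_2$ than the paper's, which instead constructs a second cut through $\om_2$ (the pullback of a ray from $v_2$ to infinity avoiding $K^*\cup D\cup E$) separating $K^*$ from $\wt K^*$; but as phrased via ``every loop in $D$ has winding number zero'' it is too weak, since decorations need not be path connected, so triviality of loop monodromy does not by itself split the preimage. It can be repaired using Lemma \ref{l:triv}: since $\ol D\cup K^*$ is a full continuum missing $v_2$, one can join $v_2$ to $K^*\cup E\cup\{\infty\}$ by an arc avoiding $\ol D$, and the complementary component containing $D$ is a simply connected subdomain of $W\sm\{v_2\}$ over which the double cover trivializes. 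Second, you yourself flag the accumulation matching (which $Z_2$-preimage accumulates on $\wt K^*$, and that $T$ reaches both $K^*$ and $\wt K^*$) as ``the main obstacle'' and leave it unresolved; the paper settles it precisely with the cut through $\om_2$ in its treatment of $D\ne D_v$, and, for $T$, with the observation that a neighborhood of $\wt K^*$ maps homeomorphically onto a neighborhood of $K^*$ and hence must meet $f^{-1}(D_v)$ outside the $Z_1$-pullback. As it stands, then, the proposal supplies a promising skeleton but omits proofs of the two facts that carry the real difficulty: the location of $\om_2$ and $\wt K^*$ relative to $Z_1,Z_2$, and the identification of which pullbacks accumulate where.
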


\begin{proof}
We prove Theorem \ref{t:decor} step by step.

\noindent \textbf{Step 1.} \emph{If $D$ is a decoration of $f$, then
every pullback of $D$ is a subset of some decoration of $f$. Moreover,
if $D'$ and $D''$ are decorations and $f(D')\cap D''\ne \0$, then
$f(D')\supset D''$}.

\noindent Proof of Step 1. Let $\Gamma$ be a pullback of $D$. Clearly,
$\Gamma\subset K\sm K^*$. Since $\Gamma$ is connected, it must lie in some
decoration. Now, if $D'$ and $D''$ are decorations and $f(D')\cap
D''\ne \0$ then we can choose a pullback $D'''$ of $D''$ which is
non-disjoint from $D'$. By the above $D'''\subset D'$; by Lemma
\ref{l:pull}, \, $f(D''')=D''$. Hence $f(D')\supset D''$ as desired.

\noindent \textbf{Step 2.} \emph{If $D$ is a non-critical decoration
then $f(D)$ is a decoration.}

\noindent Proof of Step 2. The set $f(D)$ is connected and disjoint
from $K^*$ %(the latter holds because
(by definition of a non-critical decoration). %$D$ is non-critical).
Hence it is
contained in one decoration. By Step 1, the set $f(D)$ coincides with this
decoration.

\noindent \textbf{Step 3.} \emph{$\om_2\in D_c$}.

\noindent Proof of Step 3. Let $\om_2\notin D_c$.  Since by Lemma
\ref{l:triv}, $A=D_c\cup K^*$ is compact, a neighborhood of $\om_2$ is
disjoint from $A$.  It follows that the set $L$ is contained in a
decoration $D\ne D_c$. Choose a neighborhood $U$ of $f(L)$ so that a
pullback $W$ of $U$ with $\om_2\in W$ is disjoint from $A$. Then $W$ is
a neighborhood of $L$ that maps two-to-one onto $U$.

Choose a $K$-ray $R$ that enters $U$ and denote the first (coming from
infinity) point of intersection of $R$ and $\bd(U)$ %of $U$
by $x$.
Denote the union of the segment of $R$ from infinity to $x$ and a curve
$I'\subset U$ from $x$ to $v_2$ by $R'$. The pullback $R''$ of $R'$
containing $\om_2$ consists of two segments of two $K$-rays each of
which maps to the segment of $R$ from infinity to $x$, and an arc $I''$
that double-covers $I'$. Clearly, $R''$ partitions the plane in two
half-planes on one of which $f$ acts in the one-to-one fashion while on
the other one it acts in the two-to-one fashion. However by the
construction $R''$ is disjoint from $A$ and points of $K^*\subset A$
have three preimages in $A$, a contradiction. Thus, $\om_2\in D_c$.

\noindent \textbf{Step 4.} \emph{We have $\om_2\in Z_2$, and, hence,
$D_c\subset Z_2$. The map $f$ is a homeomorphism on $Z_1$. If $D$ is a
decoration then $Z_1$ contains a unique homeomorphic pullback $D'$ of
$D$ which is itself a non-critical decoration. No decoration has a
unique pullback.}

\noindent Proof of Step 4. Both $Z_1$ and $Z_2$ contain points from
$K\sm K^*$; in fact, their union contains the entire $K\sm K^*$. Hence,
all the decorations are contained in $Z_1\cup Z_2$. Notice though, that
the $K$-rays approaching points of $K\sm K^*$ have arguments from the
disjoint open arcs of angles at infinity, namely, from $I_1$ and $I_2$,
respectively. Since all $K$-rays in $Z_1$ have arguments from the arc
$I_1$, then they all have distinct images. We claim that $\om_2\notin
Z_1$. Indeed, suppose that $\om_2\in Z_1$. By Step 3 then $D_c\subset
Z_1$ and so $\wt K^*\subset Z_1$. Choose a repelling periodic point
$y\in K^*$ accessible by a $K$-ray $R$ to $K^*$ from within $Z_1$. Then
choose the first preimage $y'\in \wt K^*$ of $f(y)$. Clearly, $y'$ is also
accessible by a $K$-ray $R'$ which itself is a pullback of $f(R)$. However
both rays must have arguments from $I_1$, a contradiction (recall that
$I_1$ is an arc of length $1/3$). Thus, $\om_2\in Z_2$ and, hence,
$D_c\subset Z_2$.

Let us show that $f|_{Z_1}$ is a homeomorphism. It is a homeomorphism
on %the open set $Q$ which is
the union of all $K$-rays with arguments
from $I_1$. Suppose that points $x, y\in Z_1$ are such that
$f(x)=f(y)=z$. We may assume that $z\notin E\cup \{v_2\}$. Since
$\wt K^*\subset D_c\subset Z_2$, then $z\notin K^*$. Hence we can construct a ray $H$
from $z$ to infinity bypassing $K^*\cup v_2\cup E$. Consider pullbacks
$H_x$ and $H_y$ of $H$ such that $x\in H_x$ and $y\in H_y$. By the
choices we made, $H_x\subset Z_1, H_y\subset Z_1,$ and $H_x\cap
H_y=\0$. It follows that there exist distinct points $x'\in H_x$ and
$y'\in H_y$ with the same image and such that both $x'$ and $y'$ belong
to $\C\sm K$. However then $x'$ and $y'$ must belong to $K$-rays with
arguments from $I_1$, a contradiction. Hence $f|_{Z_1}$ is one-to-one.
By Brouwer's Invariance of Domain Theorem, $f|_{Z_1}$ is a
homeomorphism onto $f(Z_1)$.

Let $D$ be a decoration. Choose $y\in D\cap J(f)$ %\sm K^*$
and a sequence
$y_i\in \C\sm K$ of points that converge to $y$ and belong to $K$-rays $R_i\ne
E$. Choose $K$-rays $R'_i\in Z_1$ with $f(R'_i)=R_i$ and then points
$y'_i\in R'_i$ such that $f(y_i')=y_i$. Then $y'_i\to y'$ where
$f(y')=y$, and hence $y'\in D'$ where $D'\subset Z_1$ is a decoration.
The rest of the claim follows.

\noindent \textbf{Step 5.}  \emph{If $D\ne D_v$ is a decoration  then
it has three pullbacks each of which maps onto $D$ homeomorphically.
Two of the pullbacks accumulate into $K^*$; one pullback accumulates
into $\wt K^*$.}

\noindent Proof of Step 5. By Lemma \ref{l:triv} the set $K^*\cup D$ is
a full continuum; clearly, $v_2\notin K^*\cup D$. Apply the E-construction
to $D$. Then draw a ray $R$ from $v_2$ to infinity so that $R\cap
(K^*\cup D\cup E)=\0$. Construct the pullback $C$ of $R$ passing
through $\om_2$; the set $C$ cuts the plane in two half-planes, $X$ and $Y$,
such that $X$ maps onto $\C\sm R$ in the one-to-one fashion while $Y$
maps onto $\C\sm R$ in the two-to-one fashion. The cut $C$ is disjoint
from $K^*\cup \wt K^*$ as well as from the pullbacks of $D$. Hence $C$
separates $K^*$ and $\wt K^*$; it is easy to see that $\wt K^*\subset
X$ and $K^*\subset Y$. It follows that there exists a unique pullback
$D_1$ of $D$ contained in $X$. Since it has to accumulate to points
mapped to $K^*$, the set $D_1$ accumulates into $\wt K^*$. Hence
$D_1\subset D_c$. Since $D_c\subset Z_2$, then $D_1\subset Z_2$.

In addition to $D_1$, by Lemma \ref{l:pull} there may be either one
pullback of $D$ mapped onto $D$ in the two-to-one fashion, or two
pullbacks of $D$ mapped onto $D$ in the one-to-one fashion. By Step 4,
$D$ has a homeomorphic pullback $D_2\subset Z_1$ which is a non-critical
decoration. Clearly, $D_2\ne D_1$. Hence $D$ has three homeomorphic
pullbacks of which $D_1$ accumulates to $\wt K^*$ and $D_2$ accumulates
to $K^*$. Let $D_3$ be the remaining pullback of $D$. If it accumulated
to $\wt K^*$ it would follow that $f|_{\wt K^*}$ is not one-to-one, a
contradiction. Hence $S$ accumulates to $K^*$.

\noindent \textbf{Step 6.} \emph{The decoration $D_v$ has two
pullbacks. One of them, say, $T$, maps onto $D_v$ in the two-to-one
fashion, contains $\om_2$, is contained in $D_c$, and accumulates to
both $K^*$ and $\wt K^*$; the other one is the homeomorphic pullback
$D'_v$ defined in Step 4.}

\noindent Proof of Step 6. Clearly, $D_v$ cannot have three pullbacks
as otherwise the point $v_2$ will have four preimages (counted with
multiplicity), a contradiction. By Step 4, % $D_v$ cannot have one
%pullback. Hence $D_v$ has exactly two pullbacks. By Step 4
$D_v$ has a
homeomorphic pullback $D'_v\subset Z_1$. Thus, we only need to study
the remaining two-to-one pullback $T$ of $D_v$. Clearly, $\om_2\in T$
which  implies that $T\subset D_c$. To prove that $T$ accumulates in
both $K^*$ and $\wt K^*$, notice that there are points of $T$ close to
$\wt K^*$. Indeed, a neighborhood of $\wt K^*$ maps homeomorphically
onto a neighborhood of $K^*$ and therefore contains points of the
preimage of $D_v$. These points cannot belong to $D'_v$ because
$\ol{D'_v}\sm D'_v\subset K^*$ by Lemma \ref{l:triv}. Hence they belong to
$T$ as claimed. Thus, $T$ accumulates into $\wt K^*$. To show that $T$
accumulates into $K^*$ too, choose a sequence of points $y_i\in D_v$
such that $y_i\to y\in K^*$. For each $i$, let $y'_i, y''_i\in T$ be
two distinct preimages of $y_i$ in $T$. We may assume that they
converge to $y', y''$ respectively. If $y', y''\notin K^*$ then $y',
y''\in \wt K^*$ which implies that in any neighborhood of $\wt K^*$
there are pairs of points with the same image, a contradiction. Hence
$T$ accumulates into both $K^*$ and  $\wt K^*$.
\end{proof}

\section{Quadratic arguments}
\label{s:quadarg}
%Let $f$ be an immediately renormalizable cubic polynomial with filled quadratic-like Julia set $K^*$.

Consider $K^*$-rays $R^*(\al)$. Clearly, $f(R^*(\al))\supset R^*(2\al)$
(the curve $f(R^*(\al))$ extends the ray $R^*(2\al)$ into the annulus
between the basic neighborhood of $K^*$ and its image). A \emph{crosscut}
(of $K^*$) is a closed arc $I$ %of finite length
with endpoints $x, y\in K^*$ such that
$[I\sm \{a, b\}]\subset \C\sm K^*$. If $a_n$ is a crosscut then
the \emph{shadow} $\cont(a_n)$ of a
crosscut $a_n$ is the bounded complementary component of $a_n\cup K^*$.
A sequence of crosscuts $a_n, n=1, 2, \dots$ is \emph{fundamental} if
$a_{n+1}\subset \cont(a_n)$ for every $n$ and the %length
diameter of $a_n$
converges to $0$ as $n\to\infty$. Two fundamental sequences of crosscuts
are \emph{equivalent} if crosscuts of one sequence are eventually
contained in the shadows of crosscuts of the other one, and vice versa.
This is an equivalence relation whose classes are called \emph{prime ends}.
In what follows the set of endpoints of a closed arc $I$ is denoted by $\en(I)$.

By the Carath\'eodory theory, every quadratic-like ray $R^*(\al)$ to $K^*$
corresponds to a certain prime end $E^*(\al)$ represented by a fundamental sequence
of crosscuts $\{a_n\}$. Moreover,
for every $a_n$ a \emph{tail} of $R^*(\al)$ is contained in $\cont(a_n)$ (a
\emph{tail} of $R^*(\al)$ is defined by a point $x\in R^*(\al)$ and
coincides with the component of $R^*(\al)\sm \{x\}$ that accumulates
into $K^*$). It is convenient to consider also the associated picture in
$\C\sm \disk$; the picture on the $K^*$-plane is transferred to that in
$\C\sm \disk$ by means of the map $\psi^*$ introduced earlier.

Namely, for every $n$, the set $\psi^*(a_n\sm \en(a_n))$ is an arc
$I_n\subset \C\sm \cdisk$ without endpoints such that $\ol{I}$ is a closed ark
with endpoints $x_n, y_n\in \uc$. One can choose
the circle arc $I'_n$ positively oriented from $x'_n$ to $y'_n$ such that
$\al\in I'_n$ and consider the Jordan curve $Q_n=I_n\cup
I'_n$; then the radial ray $R_\al$ with the initial point
at $z_\al\in \uc$ where $z_\al\in \uc$ is the point of the circle with argument
$\al$, intersected with the simply connected domain $U_{a_n}$ with boundary
$Q_n$, contains a small subsegment of $R_\al$ with an endpoint $z_\al$.
Observe that $(\psi^*)^{-1}(U_{a_n})$ is the shadow of the crosscut $a_n$.
The \emph{impression} of $E^*(\al)$ is the intersection of the closures of $\cont(a_n)$. We
say that a prime end $E^*(\al)$ is \emph{disjoint} from a set $S\subset\C\sm
K^*$ if $\cont(a_n)\cap S=\0$ for all sufficiently large $n$.
%Recall that $\psi: \C\sm K^*\to \C\sm \cdisk$ is the inverse of the
Riemann map of $\C\sm K^*$. In what follows, when talking about crosscuts, we will use this notation.

\begin{lem}\label{l:cross-primends}
Suppose that $X\subset \C\sm K^*$ is a connected set, and
%$\psi^{-1}(X)$
$\psi^*(X)$ accumulates on exactly one point $z_\al\in \uc$ with
argument $\al$. Then $X$ is non-disjoint from $E^*(\al)$ and
disjoint from any other prime end.
\end{lem}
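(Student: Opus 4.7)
The plan is to transfer the situation into the model plane $\mathbb{C}\sm\cdisk$ via $\psi^*$ and use the standard Carath\'eodory correspondence between prime ends of $\C\sm K^*$ and points of $\uc$. There are two things to verify: (i) for every fundamental sequence $\{a_n\}$ representing $E^*(\al)$, shadows meet $X$ arbitrarily far down the sequence, and (ii) for every $\be\ne\al$ and every fundamental sequence $\{b_n\}$ representing $E^*(\be)$, the shadows of $b_n$ are eventually disjoint from $X$.

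For (i) I would fix any fundamental sequence $\{a_n\}$ of $E^*(\al)$ and work with the corresponding region $U_{a_n}=\psi^*(\cont(a_n))\subset\C\sm\cdisk$ introduced in the preamble, whose boundary is the crosscut image $I_n$ together with the circle arc $I'_n\ni z_\al$. Because $z_\al$ lies in the interior of $I'_n$, for each $n$ there is a one-sided Euclidean neighborhood of $z_\al$ in $\C\sm\cdisk$ contained in $U_{a_n}$. By hypothesis one can pick $x_k\in X$ with $\psi^*(x_k)\to z_\al$; then for any given $n$ we have $\psi^*(x_k)\in U_{a_n}$ for all sufficiently large $k$, hence $x_k\in\cont(a_n)$. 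This yields $X\cap\cont(a_n)\ne\0$ for every $n$, which is already stronger than the required non-disjointness.

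For (ii) I would argue by contradiction. Suppose some $\be\ne\al$ and some fundamental sequence $\{b_n\}$ for $E^*(\be)$ satisfy $X\cap\cont(b_n)\ne\0$ for arbitrarily large $n$, and pick $y_n$ in these intersections. The essential input is the Carath\'eodory correspondence: $\{\psi^*(b_n)\}$ is a null chain in the model $\C\sm\cdisk$ representing the circle point $z_\be$. In this very concrete model a crosscut of small Euclidean diameter cuts off a cap of comparable diameter, so $\mathrm{diam}(U_{b_n})\to 0$ and every point of $U_{b_n}$ is close to $z_\be$. Therefore $\psi^*(y_n)\to z_\be$, producing an accumulation point of $\psi^*(X)$ on $\uc$ distinct from $z_\al$ and contradicting the hypothesis.

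The main obstacle is the transfer step used in (ii): one must cleanly verify that fundamental sequences of crosscuts in $\C\sm K^*$ correspond under $\psi^*$ to null chains in $\C\sm\cdisk$ whose shadows shrink in Euclidean diameter to the single point on $\uc$ prescribed by the prime end. This is standard Carath\'eodory theory once combined with the elementary fact that in the model domain $\C\sm\cdisk$ the shadow of a crosscut is controlled by its diameter, but it is the only point in the argument that is not pure bookkeeping.
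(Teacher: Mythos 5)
Your argument is correct and follows essentially the same route as the paper: transfer everything to the model plane via $\psi^*$, note that $z_\al$ lies on the circular boundary arc of every shadow $U_{a_n}$ to get non-disjointness from $E^*(\al)$, and use that the shadows of a chain for $E^*(\be)$, $\be\ne\al$, are eventually confined near $z_\be$, which $\psi^*(X)$ avoids, to get disjointness. The only cosmetic difference is that you phrase the second part contrapositively (producing a second accumulation point on $\uc$), while the paper directly picks a crosscut $b$ associated with $\be$ so small that $\ol{U_b}$ misses $\psi^*(X)$; both rest on the same Carath\'eodory correspondence that you correctly identify as the one non-trivial input.
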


\begin{proof}
Let $a$ be a crosscut associated with $E^*(\al)$. Consider the set $U_a$. Since
$\psi^*(X)$ accumulates on $z_\al$, then $\psi^*(X)$ is non-disjoint
from $U$, and hence $X$ is non-disjoint from $\cont(a)$. By definition, $X$ is
non-disjoint from  $E^*(\al)$.
Also, for any point $t=e^{2\pi \beta i}\ne z_\al$ we can find a crosscut
$b$ associated with $\beta$ and so small that $\ol{U_b}$ is
disjoint from $\psi^*(X)$. Then $X$ is disjoint from
$\cont(b)$ and hence $X$ is disjoint from $E^*(\beta)$.
\end{proof}

Recall that a set $A$ \emph{accumulates in} $B$ if $\ol{A}\sm A\subset B$.

\begin{lem}
  \label{l:accum}
Suppose that a connected subset $\wh D\subset\C\sm\ol\disk$ accumulates on
a non-degenerate arc $A\subsetneqq \uc$. Let $\wh R$ be an arc in
$\C\sm\ol\disk$ landing at an interior point $y$ of $A$. Then $\wh R$
crosses $\wh D$.
\end{lem}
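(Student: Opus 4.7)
The plan is to argue by contradiction: assume $\wh R \cap \wh D = \varnothing$, and combine a local separation near the landing point $y$ with a global Jordan-curve construction to obtain a contradiction with the connectedness of $\wh D$.

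For the local analysis, pick a small disk $B$ around $y$ so that $H := B \cap (\C\sm\ol\disk)$ is a Jordan domain bounded by a circular arc and a subarc of $\uc$ through $y$. The component of $\wh R \cap B$ that accumulates at $y$, together with the point $y$, is a Jordan arc inside $\ol H$ joining $\partial B$ to $y$, and by the Jordan arc theorem it splits $H$ into two Jordan subdomains $L$ and $R$. Because $y$ lies in the interior of $A$, after shrinking $B$ the arc $A \cap B \cap \uc$ is split by $y$ into two non-degenerate subarcs, one in $\partial L \cap \uc$ and the other in $\partial R \cap \uc$. Since $\wh D$ accumulates on $A$, I pick $p_L$ and $p_R$ on these two subarcs close to $y$ and approximate them by points of $\wh D$; sufficiently close approximants lie in $L$ and in $R$ respectively, so $\wh D$ meets both $L$ and $R$.

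The main obstacle is that $\wh R$ does not separate $\C\sm\ol\disk$ globally, so the local separation alone is not enough: $\wh D$ could in principle join its pieces in $L$ and $R$ by detouring around $\wh R$ in the annulus $\C\sm\ol\disk$. To rule this out I close $\wh R \cup \{y\}$ up to a Jordan curve $J$ in the Riemann sphere $\hat\C$ by adjoining (i) a chord inside $\ol\disk$ from $y$ to a point $z_0 \in \uc \sm \ol A$, available because $A \subsetneqq \uc$, and (ii) a further arc from $z_0$ back to the other end of $\wh R$, routed through $\infty$ if $\wh R$ is unbounded, chosen to miss $\wh D$. The auxiliary arc in (ii) can be built because the accumulation hypothesis forces $\ol{\wh D} \cap \uc \subseteq A$, so $z_0$ has a neighborhood disjoint from $\wh D$, and from there one can escape in $\hat\C \sm \ol{\wh D}$ without being separated from the other end of $\wh R$ by the continuum $\ol{\wh D}$. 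The resulting $J$ contains $\wh R$, is disjoint from $\wh D$, and near $y$ its two complementary Jordan domains in $\hat\C$ pick out $L$ and $R$. Since $\wh D$ is connected and disjoint from $J$, it must lie in a single component of $\hat\C \sm J$, contradicting that it meets both $L$ and $R$; hence $\wh R \cap \wh D \neq \varnothing$.
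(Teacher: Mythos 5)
Your local analysis near $y$ is sound and close to what is needed: a terminal piece of $\wh R$ is a crosscut of $B\cap(\C\sm\ol\disk)$ with sides $L$ and $R$, and since $\wh D$ accumulates at points of $A$ on both sides of $y$, it meets both. The genuine gap is the step where you close $\wh R\cup\{y\}$ up to a Jordan curve: you assert that an arc from $z_0\in\uc\sm A$ to the far endpoint $w$ of $\wh R$ avoiding $\wh D$ exists because one can ``escape in $\ol\C\sm\ol{\wh D}$ without being separated from the other end of $\wh R$''. That is precisely the nontrivial point, and it does not follow from the stated hypotheses: nothing prevents $\ol{\wh D}$, together with $\ol\disk$, from separating $w$ both from $\infty$ and from $\uc\sm A$. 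Concretely, let $A$ be the right half of $\uc$, $y=1$, and $\wh R=(1,3/2]$ on the positive real axis; let $\wh D$ be a ``hairpin'' made of a $\sin(1/x)$-type curve in the sector $0<\arg z<\pi/2$ whose modulus strictly decreases and which accumulates on the upper half of $A$ (including $y$), a symmetric curve in the sector $-\pi/2<\arg z<0$ accumulating on the lower half, and the circular arc $\{|z|=8/5,\ |\arg z|\le \pi/4\}$ joining their initial points and passing beyond $w=3/2$. Then $\wh D$ is connected, closed in $\C\sm\ol\disk$, $\ol{\wh D}\sm\wh D=A$, and $\wh R\cap\wh D=\0$; yet $w$ lies in a pocket of $\C\sm(\ol\disk\cup\wh D)$ whose only accessible point of $\uc$ is $y$: an arc from $z_0$ to $w$ missing $\wh D$ and the chord would, outside $\ol\disk$, stay at modulus at least some $\rho>1$ and would then have to cross the compact arc $\wh D\cap\{|z|\ge\rho\}$, which is a crosscut of $\{|z|\ge\rho\}\cup\{\infty\}$ separating $w$ from every point of argument $\ge\pi/2$ in absolute value, while through the disk it could re-enter the pocket only at $y$, which lies on $J$. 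So the closing arc need not exist, and in fact such a configuration even violates the conclusion of the lemma as literally stated: any proof must use more than the bare hypotheses.

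What makes the statement work where it is used (Proposition \ref{p:pends}) is extra information your argument does not have: there $\wh R$ and its companions are $\psi^*$-images of external rays of $K$, so they continue to infinity while staying disjoint from $\wh D$, and such curves land at a dense set of points of $\uc$. The paper's proof uses exactly this: it extends $\wh R$ to infinity avoiding $\wh D$, takes a second ray to infinity landing at some $x\in\uc\sm A$ and disjoint from $\wh D$, and observes that these two rays together with one of the arcs of $\uc$ between $x$ and $y$ separate the plane, so the connected set $\wh D$ accumulates on only one side of $y$, contradicting that $y$ is interior to $A$. To repair your proof you must add (or import from the application) the hypothesis that $\wh R$ can be continued to $\infty$, or at least to a point of $\uc\sm A$, without meeting $\wh D$; with that, your Jordan-curve construction does go through, but as written the key closing arc is exactly the missing step.
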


\begin{proof}
By way of contradiction assume that $\wh R_1\cap \wh D=\0$. Extend $\wh R_1$ to
infinity still avoiding $\wh D$. Since $A\ne \uc$, then there exists a ray
$\wh R_2$ from a point $x\in \uc\sm A$ to infinity that is disjoint from
$\wh D$. It follows that $\wh D$ is contained in a component of $\C\sm
(\wh R_1\cup \wh R_2\cup I)$ where $I$ is one of the two circle arcs with
endpoints $x$ and $y$. However then $\wh D$ can only accumulate on the
part of $A$ that is contained in $I$, a
contradiction.
\end{proof}

Proposition \ref{p:pends} uses Lemma \ref{l:accum}.

\begin{prop} \label{p:pends}
Every decoration $D$ is disjoint from all prime ends of $K^*$ except
exactly one.
\end{prop}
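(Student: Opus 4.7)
The plan is to reduce the proposition to Lemma \ref{l:cross-primends} by showing that $\psi^*(D)$ accumulates on exactly one point of $\uc$. First I would note that, since $D$ is a decoration, Lemma \ref{l:triv} gives that $\ol{D}\sm D=\ol{D}\cap K^*$ is a non-empty continuum; hence $\psi^*(D)$ accumulates on at least one point of $\uc$, and by Corollary \ref{l:triv2} the set $I_D=\ol{\psi^*(D)}\cap \uc$ is a (connected) continuum on the circle.

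The crucial step is to show $I_D$ is a single point, and for this I would argue by contradiction, invoking Lemma \ref{l:accum}. If $I_D$ were non-degenerate, then $I_D$ would contain a non-degenerate proper arc $A\subsetneqq \uc$. As noted in the proof of Lemma \ref{l:raya1}, landing points on $\uc$ of $\psi^*$-images of $K$-rays that land at periodic repelling points of $K^*$ form a dense subset of $\uc$. Hence I could pick such a $K$-ray $R$ with $\psi^*(R)$ landing at an interior point $y$ of $A$. Since $R\subset \C\sm K$ while $D\subset K$, we have $R\cap D=\emptyset$, and because $\psi^*$ is a homeomorphism on $\C\sm K^*$ this gives $\psi^*(R)\cap \psi^*(D)=\emptyset$. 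But applying Lemma \ref{l:accum} with $\wh D=\psi^*(D)$ and $\wh R=\psi^*(R)$ forces $\psi^*(R)$ to cross $\psi^*(D)$, a contradiction.

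Consequently $I_D=\{z_\al\}$ for some $\al\in\R/\Z$, so $\psi^*(D)$ accumulates on exactly one point of $\uc$. Lemma \ref{l:cross-primends} applied to $X=D$ then yields that $D$ is non-disjoint from the prime end $E^*(\al)$ and disjoint from every other prime end of $K^*$, which is precisely the conclusion of Proposition \ref{p:pends}.

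The main obstacle is the contradiction step: one must ensure that a $K$-ray can be chosen whose $\psi^*$-image lands in the \emph{interior} of $A$ (so that Lemma \ref{l:accum} is applicable), and verify that disjointness of $\psi^*(R)$ from $\psi^*(D)$ persists even though $D$ may accumulate richly on $K^*$. Both points reduce to the density statement underlying the Ray Assumption (Lemma \ref{l:raya1}), which is the geometric engine that drives the whole argument.
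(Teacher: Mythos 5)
Your proposal is correct, and its skeleton matches the paper's: show that $\psi^*(D)$ accumulates on a single point of $\uc$ and then invoke Lemma \ref{l:cross-primends}. The difference lies in how the contradiction ray is produced. The paper's proof is dynamically self-contained: it first rules out accumulation on all of $\uc$ by taking $K$-rays at two distinct repelling periodic points of $K^*$, and then obtains a curve landing at an interior point of the accumulation arc $A$ by pulling back a single periodic $K$-ray under $f$ (equivalently, under $z^2$ near the circle), using that iterated preimages under angle doubling eventually hit the interior of any non-degenerate arc; Lemma \ref{l:accum} then forces $A$ to be degenerate. You instead appeal directly to the density on $\uc$ of landing points of $\psi^*$-images of $K$-rays at repelling periodic points of $K^*$ -- the assertion made inside the proof of Lemma \ref{l:raya1} (i.e., essentially the Ray Assumption route of Lemmas \ref{l:raya1} and \ref{l:triv3}) -- which lets you pick a ray landing inside any prescribed proper subarc and thereby treat the cases $I_D=\uc$ and $I_D$ a proper arc uniformly, skipping the paper's preliminary two-ray step. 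What each buys: the paper's pullback argument needs only the existence of one periodic ray landing in $K^*$ and derives the needed density dynamically on the spot, whereas your version is shorter but outsources the density statement to Lemma \ref{l:raya1}, where it is asserted rather tersely. One point worth making explicit: when you apply Lemma \ref{l:accum} with a proper subarc $A$ of a possibly larger (even full-circle) accumulation set, the lemma as stated covers this, but its internal proof tacitly needs a second curve to infinity, disjoint from $\psi^*(D)$, landing at a point of $\uc\sm A$; in your setting this is again furnished by the same density of landing points, so you should say so (you gesture at this in your closing paragraph, and it is the only gap-like spot in the write-up).
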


\begin{proof}
\begin{comment}
By the Riemann mapping theorem there exists a holomorphic isomorphism
$\psi:\ol\C\sm K^*\to \ol\C\sm\cdisk$ such that $\psi(\infty)=\infty$.
The action of $f$ on the annulus $U^*\sm K^*$ where $U^*$ is a basic
neighborhood of $K^*$, and that of $z^2$ on the annulus
$\psi(U^*\sm K^*)$, are conjugate by $\psi$; this relation is only
valid on $U^*\sm K^*$ and $\psi(U^*\sm K^*)$. This relation does
not directly extend onto the entire $K^*$ and $\uc$; still, the
existence of $\psi$ is a useful tool. In particular, $\psi$ can
extended onto rational angles on $\uc$ and the landing points of the
corresponding $K^*$-rays in $K^*$.
\end{comment}
The (connected) set $D'=\psi^*(D)$ accumulates to a closed arc
$A\subset \uc$. Suppose that $A$ is non-degenerate, and bring this to a
contradiction. First, for every repelling periodic point $x\in K$ there
exists an $K$-ray landing at $x$. Also, $K$-rays are disjoint from $D$.
Choose two distinct repelling periodic points in $K^*$, draw $K$-rays
landing at them, and then map all this by $\psi^*$ to
$\ol\C\sm\cdisk$. This will result in two curves landing at two
distinct points of $\uc$ and disjoint from $D'$. Thus, $D'$ does not
accumulate onto the entire $\uc$.

Now, choose a periodic $K$-ray $R$ that lands at a repelling periodic
point $w\in K^*$. Under the map $\psi^*$ it is associated to a curve
$\psi^*(R)$ that lands at a certain point $x\in \uc$ and is
otherwise disjoint from $\cdisk$. Pulling back $R$ under $f$ following
backward orbit of $w$ in $K^*$ corresponds to pulling back
$\psi^*(R)$ under $z^2$. Since $A$ is non-degenerate, there exists a
number $N$ such that some $N$-th pullback of $R$ lands at a point
$w'\in K^*$ while the corresponding $N$-th pullback of $\psi^*(R)$
is a curve $Q$ in $\C\sm \cdisk$ that lands at an interior point of
$A$. Since $Q$ is disjoint from $D'$, we see by Lemma \ref{l:accum}
that $A=\{\al\}$ is degenerate. By Lemma \ref{l:cross-primends}, $D$ is
disjoint from all prime ends of $K^*$ except for $E^*(\al)$.
\end{proof}

Suppose that $E^*(\al)$ is the only prime end non-disjoint from $D$.
Then $\al$ is called the \emph{quadratic argument} of $D$ and is
denoted by $\al(D)$. By Proposition \ref{p:pends}  each decoration has
only one quadratic argument (and so quadratic arguments are well
defined) while different decorations may a priori have the same
quadratic argument. A useful interpretation of these concepts is as
follows. Using the map $\psi^*$ for $\C\sm K^*$ we can transfer
all decorations to the set $\C\sm \cdisk$; then for any decoration $D$
the set $\psi^*(D)$ accumulates to the point $e^{2\pi i \al(D)}$ of the
unit circle with argument $\al(D)$ (i.e., $\ol{\psi^*(D)}\sm \psi^*(D)$
consists of one point from the unit circle with argument $\al(D)$).
Moreover, if $U^*$ is a basic neighborhood of $K^*$ then $\psi^*$
conjugates $z^2$ restricted onto $\psi^*(U^*\sm K^*)$ and $f$
restricted onto $U^*\sm K^*$.

The dynamics on the uniformization plane immediately implies the next lemma stated here
without proof.

\begin{lem}\label{l:z2}
If $D\ne D_c$ is a decoration then $\al(f(D))=\si_2(\al(D))$. On the other hand,
in the notation of Theorem $\ref{t:decor}$, we have that $\al(D_v)=\si_2(\al(D_c))$.
\end{lem}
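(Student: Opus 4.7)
The plan is to exploit the conjugacy $\psi^* \circ f = z^2 \circ \psi^*$ on the annulus $U^* \sm K^*$, which is the key dynamical content established in the paragraph following Proposition \ref{p:pends}. The quadratic argument of a decoration $D$ is, by definition, the unique point of $\uc$ at which $\psi^*(D)$ accumulates. Since every decoration accumulates on $K^*$ (Lemma \ref{l:triv}), this accumulation on $\uc$ comes from points of $D$ arbitrarily close to $K^*$; in particular, all $\uc$-accumulation comes from $D \cap U^*$, where the conjugacy applies.

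For the first assertion, I would take a decoration $D \ne D_c$ and observe that, by Step 2 of Theorem \ref{t:decor}, $f(D)$ is itself a decoration, so $\al(f(D))$ is well defined. The tail $D \cap U^*$ is nonempty, and $\psi^*(D \cap U^*)$ accumulates on $\uc$ only at $e^{2\pi i \al(D)}$. Applying the conjugacy, the image $\psi^*(f(D \cap U^*)) = (\psi^*(D \cap U^*))^2$ accumulates on $\uc$ at $e^{2\pi i \si_2(\al(D))}$. Since $f(D \cap U^*) \subset f(D)$, uniqueness of the quadratic argument of $f(D)$ gives $\al(f(D)) = \si_2(\al(D))$.

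For the second assertion, I would invoke the pullback $T$ of $D_v$ from Step 6 of Theorem \ref{t:decor}: $T \subset D_c$, the set $T$ accumulates on both $K^*$ and $\wt K^*$, and $f|_T : T \to D_v$ is two-to-one. The inclusion $T \subset D_c$ forces the accumulation of $\psi^*(T)$ on $\uc$ to be contained in $\{e^{2\pi i \al(D_c)}\}$; since $T$ does accumulate on $K^*$, this accumulation equals exactly this singleton. Applying the conjugacy to the tail $T \cap U^*$, the image $\psi^*(f(T \cap U^*)) \subset \psi^*(D_v)$ accumulates on $\uc$ at $e^{2\pi i \si_2(\al(D_c))}$, whence $\al(D_v) = \si_2(\al(D_c))$.

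The only delicate point is verifying that all $\uc$-accumulation of $\psi^*(D)$ comes from the portion of $D$ lying in $U^* \sm K^*$; this is immediate since any neighborhood of $\uc$ in $\C \sm \cdisk$ corresponds under $(\psi^*)^{-1}$ to a neighborhood of $K^*$, which is eventually contained in $U^*$. Thus no genuine obstacle arises, and the lemma reduces cleanly to the conjugacy relation combined with the structural description of $T$ supplied by Theorem \ref{t:decor}.
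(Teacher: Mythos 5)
Your argument is correct and is precisely the route the paper has in mind: the paper states Lemma \ref{l:z2} without proof, remarking only that it follows immediately from the dynamics on the uniformization plane, i.e.\ from the conjugacy between $f$ on $U^*\sm K^*$ and $z^2$ on $\psi^*(U^*\sm K^*)$, which is exactly what you use. Your additional care for the second claim --- routing the argument through the pullback $T\subset D_c$ of $D_v$ from Step 6 of Theorem \ref{t:decor}, since $f(D_c)$ is not a decoration --- is the right way to make the ``immediate'' claim rigorous and matches the intent signaled by the phrase ``in the notation of Theorem \ref{t:decor}''.
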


In what follows we use Riemann maps and their inverses for either the entire filled Julia
set $K$ of $f$, or a quadratic-like Julia set $K^*$. In the former case
we talk about $K$-plane and $z^3$-plane (which are associated to each
other under the appropriate Riemann map), in the latter case,
similarly, we talk about $K^*$-plane and $z^2$-plane.

Corollary \ref{c:2pbdec} follows from Proposition \ref{p:pends}.

\begin{comment}
Moreover, it can be chosen to that $F(z)=\psi^{-1}\circ f\circ\psi(z)$
is close to $z^2$ for $z$ close to $\ol\disk$. Speaking more formally,
we can extend the map $F$ to $\uc$ by setting $F(z)=z^2$ for $z\in\uc$.
The claim is that $F$ is continuous on $\uc$, provided that $\psi$ is
chosen suitably. This claim is an easy consequence of the
Carath\'eodory theory. Note that $F$ is defined on the set
$$
\C\sm\left(\disk\cup \psi^{-1}\left(\bigcup_{n\ge 0} f^{-n}(\wt K^*)\right)\right).
$$
In particular, $F$ is defined on $\psi^{-1}(D)$ every non-critical decoration $D$.

We will use the following Theorem, cf. \cite[Theorem 6.9]{bclos}.

\begin{thm}\label{t:uniform}
For every decoration $D$, the set $\psi^{-1}(D)$ accumulates on exactly
one point of $\uc$.
\end{thm}

Suppose that $\psi^{-1}(D)$ accumulates on a point $u$.
\end{comment}

\begin{cor} \label{c:2pbdec}
Consider a decoration $D$ with quadratic argument $\al$. Then both
$\al/2$ and $(\al+1)/2$ are quadratic arguments of decorations
containing pullbacks of $D$. These two decorations are different.
\end{cor}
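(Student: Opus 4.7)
The plan is to combine Theorem~\ref{t:decor}, which catalogues the $f$-pullbacks of $D$, with the fact that $\psi^*$ conjugates $f|_{U^*\setminus K^*}$ to $z^2$ on $\psi^*(U^*\setminus K^*)$. By Theorem~\ref{t:decor}, there are two distinguished decorations $E_a,E_b$ containing pullbacks of $D$ that accumulate to $K^*$: namely $E_a=D_2$, $E_b=D_3$ if $D\ne D_v$, and $E_a=D'_v$, $E_b=D_c$ (with $D_c$ containing the two-to-one pullback $T$) if $D=D_v$. These two decorations are manifestly distinct in either case. Lemma~\ref{l:z2} then gives $\al(E_a),\al(E_b)\in\{\al/2,(\al+1)/2\}$, via its general formula for the non-critical pullbacks and via its second clause for $D_c$ when $D=D_v$.

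The main step is to show that $\al(E_a)\ne\al(E_b)$. I would choose a sequence $y_n\in D\cap(V^*\setminus K^*)$ with $\psi^*(y_n)\to e^{2\pi i\al}$. Since $\psi^*\circ f=z^2\circ\psi^*$ defines an unramified degree-two cover on $U^*\setminus K^*$, each $y_n$ has two preimages $x_n^{(1)},x_n^{(2)}\in U^*\setminus K^*$; using the two continuous branches of $\sqrt{\cdot}$ on a small neighborhood of $e^{2\pi i\al}$ and passing to a subsequence, I can label the preimages so that
$$
\psi^*(x_n^{(1)})\to e^{2\pi i\al/2},\qquad \psi^*(x_n^{(2)})\to e^{2\pi i(\al+1)/2}.
$$
For $n$ large each $x_n^{(i)}$ lies in $E_a\cup E_b$: in the case $D\ne D_v$ the third pullback $D_1$ satisfies $\ol{D_1}\cap K^*=\emptyset$ by Theorem~\ref{t:decor} and is therefore bounded away from $K^*$; in the case $D=D_v$ there are only the two pullbacks $D'_v$ and $T\subset D_c$ to begin with.

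A pigeonhole argument then picks decorations $E,E'\in\{E_a,E_b\}$ containing infinitely many terms of $\{x_n^{(1)}\}$ and $\{x_n^{(2)}\}$ respectively. By Proposition~\ref{p:pends}, $\psi^*(E)$ accumulates on a single point of $\uc$, which must then be $e^{2\pi i\al/2}$; likewise for $E'$, forcing $\al(E)=\al/2$, $\al(E')=(\al+1)/2$, $E\ne E'$, and $\{\al(E_a),\al(E_b)\}=\{\al/2,(\al+1)/2\}$. The only potentially delicate point is that in the case $D=D_v$ the two-to-one pullback $T\subset D_c$ could in principle absorb both sequences, but then $\psi^*(D_c)$ would accumulate on two different points of $\uc$, again contradicting Proposition~\ref{p:pends}. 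This forces at least one of the sequences to have infinitely many terms in $D'_v$, and the pigeonhole argument goes through.
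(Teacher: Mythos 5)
Your proposal is correct and follows essentially the same route as the paper: the paper deduces the corollary directly from Proposition \ref{p:pends} together with the fact that $\psi^*$ conjugates $f$ on $U^*\sm K^*$ with $z^2$ near the unit circle, so that each decoration accumulates at a single circle point and the two square roots of $e^{2\pi i\al}$ yield the two distinct arguments $\al/2$ and $(\al+1)/2$. Your explicit lifting of a sequence $y_n\to K^*$ and the pigeonhole via Theorem \ref{t:decor} and Lemma \ref{l:z2} is just a detailed write-up of that same argument.
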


\section{Cubic arguments}\label{s:cubarg}

%In the previous sections we have not used our previous results because
So far we have been working on establishing general facts concerning
the situation when a cubic polynomial $f$ has a connected
quadratic-like filled Julia set $K^*$ (clearly, $K^*\subset K(f)$). In this section we begin looking into more specific cases;
as the first step we describe some results and concepts, mostly taken from
\cite{bot16}.

Consider an immediately renormalizable polynomial $f\in \Fc_\la$ with
$|\la|\le 1$, and define an invariant quadratic gap $\Uf(f)$ associated
with $0$; as before, by $f^*:U^*\to V^*$, we denote the corresponding
quadratic-like map. When $J(f)$ is disconnected, gaps analogous to
$\Uf(f)$ were studied in \cite{bclos} where tools developed in
\cite{lp96} were used; however this approach is based upon the fact
that $J(f)$ is disconnected in an essential way and, hence, is very
different from that used in \cite{bot16} and here. Once we introduce
$\Uf(f)$, we shall see that it coincides with the similar gap in the
disconnected case. Recall that by Lemma \ref{l:7.2} if $f$ is a cubic
polynomial with a non-repelling fixed point $a$, then there exists at
most one quadratic-like filled invariant Julia set $K^*$ containing
$a$; by Corollary \ref{c:7.2} if $f$ has empty rational lamination then
it has a unique non-repelling fixed point $a$ and at most one
quadratic-like filled invariant Julia set that, if it exists, must
contain $a$.

To define the gap $\Uf(f)$ associated with $K^*$, we use (pre)periodic
points of $f$. Since $K^*$ is a quadratic-like filled Julia set, then $K^*$ is
a component of $f^{-1}(K^*)$.

\begin{dfn}\label{d:uf}
Let $\widehat X(f)=\widehat X$ be the set of all $\si_3$-(pre)periodic
points $\ol\al\in\uc$ such that $R_f(\al)$ lands in $K^*$. Let $X(f)=X$
be the closure of $\widehat X$. Let $\Uf(f)$ be the convex hull of $X$.
Let $\widetilde K^*$ be the component of $f^{-1}(K^*)$ different from
$K^*$ (such a component of $f^{-1}(K^*)$ exists because $f|_{K^*}$ is
two-to-one). Let $Y(f)=Y$ be the closure of the set of all preperiodic
points $\ol\al\in\uc$ with $R_f(\al)$ landing in $\widetilde K^*$.
Observe, that $\widetilde K^*$ is disjoint from $U^*$ (otherwise points
of $f(\widetilde K^*\cap U^*)$ must belong to $K^*$, a contradiction with dynamics
of points of $\widetilde K^*\cap U^*$).
\end{dfn}

From now on we fix an immediately renormalizable
polynomial $f\in \Fc_{nr}$ and do not refer to $f$ in our notation
(we write $\Uf$ instead of $\Uf(f)$ etc). Lemma \ref{l:inv-quad} summarizes
some results of Section 7 of \cite{bot16}.

\begin{lem}\label{l:inv-quad}
The set $\widetilde K^*$ is disjoint from $U^*$.
The gap $\Uf$ is an invariant quadratic gap of regular critical or periodic type.
The map $\si_3|_{\widehat X}$ is two-to-one, and
$Y$ lies in the closure of the major hole of $X$.
\end{lem}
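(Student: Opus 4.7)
My plan is to verify the four assertions in the order in which they build on one another.

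First, the disjointness $\widetilde K^* \cap U^* = \emptyset$ is immediate from the definition of the polynomial-like filled Julia set. If $x \in \widetilde K^* \cap U^*$, then $f(x) = f^*(x) \in K^* \subset U^*$, and forward-invariance of $K^*$ under $f^*$ shows inductively that every iterate $(f^*)^n(x)$ stays in $U^*$. Hence $x \in K(f^*) = K^*$, contradicting $x \in \widetilde K^* \subset f^{-1}(K^*) \setminus K^*$.

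Next I would establish that $\sigma_3|_{\widehat X}$ is two-to-one and deduce that $\Uf$ is an invariant quadratic gap. Since $f^*:K^* \to K^*$ is quadratic-like of degree two, every point of $K^*$ has exactly two $f$-preimages in $K^*$ (counted with multiplicity), and one additional preimage in $\widetilde K^*$ since $f|_{\widetilde K^*}:\widetilde K^* \to K^*$ is one-to-one by Lemma~\ref{l:pull}. For a preperiodic repelling landing point $p \in K^*$ of a ray $R_f(\alpha)$ with $\alpha \in \widehat X$, the three $\sigma_3$-preimages $\alpha/3,(\alpha+1)/3,(\alpha+2)/3$ are exactly the arguments of the rays landing at the three $f$-preimages of $p$, two of which lie in $K^*$ and one in $\widetilde K^*$. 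This gives two preimages in $\widehat X$ and one in $Y$, proving the two-to-one claim and showing that $\sigma_3$ carries $X = \overline{\widehat X}$ into itself. Consequently $\Uf = \ch(X)$ is an invariant gap whose boundary map is, after collapsing holes, a degree-two covering of $\uc$, so $\Uf$ is a quadratic invariant gap. Finally, since repelling preperiodic points are dense in $J(f^*)$, the set $X$ has no isolated points, and the classification in Section~\ref{sss:family} (parameterization by the Cantor set $Q$) forces the type of $\Uf$ to be either regular critical or periodic.

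For the last statement, that $Y$ lies in the closure of the major hole $I_\Uf$, I would argue topologically using the critical chord. The major $M_\Uf$ corresponds dynamically to a separating structure: the endpoints of $M_\Uf$ are either identified by $\sigma_3$ (regular critical type) or both periodic (periodic type), and in either case the rays $R_f(\theta)$ with $\theta$ running over the endpoints of $M_\Uf$ (together with their landing points in $K^*$) bound a region $\Omega$ containing the critical point $\omega_2$ and $v_2 = f(\omega_2)$. Since $f$ is one-to-one from $\widetilde K^*$ onto $K^*$ and $\widetilde K^*$ is a connected continuum disjoint from $K^*$, the set $\widetilde K^*$ is confined to the single complementary region containing $\omega_2$, namely $\Omega$; equivalently, for each $\alpha \in \widehat X$ the unique $\sigma_3$-preimage of $\alpha$ corresponding to $\widetilde K^*$ lies in the major hole of $X$. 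Taking closures gives $Y \subset \overline{I_\Uf}$.

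The main obstacle is the last step: rigorously identifying the hole of $X$ into which all of $Y$ falls, and distinguishing it as the major hole. The other three claims are essentially structural and follow from the quadratic-like hypothesis together with Lemma~\ref{l:pull}, but the confinement of $\widetilde K^*$ to the critical-value region requires one to transport the planar separation by the critical chord's dynamical lift back to the combinatorial separation of $\uc$ by the major $M_\Uf$, which is exactly where one invokes the results of Section~7 of \cite{bot16}.
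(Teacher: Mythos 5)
You should first be aware that the paper does not actually prove this lemma: it is introduced with the sentence that it ``summarizes some results of Section 7 of \cite{bot16}'', and the only assertion argued in the text is the disjointness $\wt K^*\cap U^*=\emptyset$, which is observed parenthetically inside Definition \ref{d:uf} by essentially the same one-line argument you give. So your first paragraph is correct and matches the paper, and your counting argument that $\si_3|_{\widehat X}$ is two-to-one (each point of $K^*$ has two $f$-preimages in $K^*$ and one in $\wt K^*$, and this transports to the preimage angles of a ray landing in $K^*$) is also sound, modulo the standard facts about landing of (pre)periodic rays.

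The remaining steps, however, contain genuine gaps. From the two-to-one property of $\si_3|_{\widehat X}$ alone you cannot conclude that $\Uf=\ch(X)$ is a quadratic invariant gap in the sense of Subsection \ref{sss:family}: that notion requires that, after collapsing the holes of $X$, the map $\si_3$ induces a \emph{monotone} degree-two circle map, which is a statement about the circular arrangement of the two $\widehat X$-preimages and the one $Y$-preimage of each angle; this needs a planar separation argument (it is essentially the same issue as the last claim, that all of $Y$ sits in the closure of a single complementary arc of $X$), not just a count of preimages. Your justification that $X$ has no isolated points ``since repelling preperiodic points are dense in $J(f^*)$'' is an assertion, not a proof (density of landing points in $\partial K^*$ does not immediately give non-isolation of the corresponding arguments), and without it the dichotomy ``regular critical or periodic'' is not available, since the classification you invoke is stated only for gaps whose basis has no isolated points. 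Finally, your argument for $Y\subset\ol{I_{\Uf}}$ assumes that the $K$-rays with arguments at the endpoints of $M_{\Uf}$ land in $K^*$ and bound a region $\Omega$; the paper explicitly warns, immediately after the lemma, that $K$-rays with arguments in $\bd(\Uf)$ need not have principal sets in $K^*$ (indeed, in the empty-rational-lamination case of the main theorem they cannot land at a common point of $K^*$), so the region you want does not obviously exist, and identifying the hole containing $Y$ as the \emph{major} hole is circular as presented. You concede this yourself by deferring the crux to Section 7 of \cite{bot16} --- which is exactly what the paper does, but for the entire lemma, so the proposal cannot be regarded as an independent proof of the statement.
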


This shows that the results of \cite{BOPT} and \cite{bot16} apply to $\Uf$
(recall that these results are described in Subsection \ref{ss:stru-fla}).
E.g., consider the map $\tau$ defined there for any quadratic invariant gap of $\si_3$
($\tau$ collapses edges of $\Uf$ and semiconjugates
$\si_3|_{\bd(\Uf)}$ and $\si_2$). The map $\psi^*$ maps $K$-rays to their counterparts on the
$z^2$-plane. On the other hand, the Riemann map defined by $K$ sends
the radial rays with rational arguments in $\C\sm \cdisk$, to $K$-rays,
including $K$-rays landing in $K^*$. Composing
these two maps we obtain a map $\eta$ that associates
radial rays from the $z^3$-plane with arguments from $\wh X$ to
$\psi^*$-images of $K$-rays on the  $z^2$-plane landing in $\uc$. Thus, if $R(\be)$ is a radial ray on
$z^3$-plane and $\be\in \widehat X$, then the ray $\eta(R(\be))$ is
contained in $z^2$-plane and lands at a point of $\uc$; this defines a
map $\tau':\widehat X\to \uc$. By construction we see that $\tau'$
semiconjugates $\si_3|_{\widehat X}$ and $\si_2$; the uniqueness of the
map $\tau$ shows that $\tau'$ is the restriction of $\tau$ onto $\widehat X$.

Observe that $K$-rays with arguments from $\bd(\Uf)$ do not necessarily
have principal sets contained in $K^*$. Nevertheless the map $\tau$
allows us to relate decorations of $K^*$ and their quadratic arguments
with edges and vertices of the gap $\Uf$.

\begin{lem}\label{l:cridi3}
The quadratic argument of $D_c$ is $\tau(M_{\Uf})$.
\end{lem}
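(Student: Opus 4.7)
The plan is to identify the unique accumulation point on $\uc$ of $\psi^*(D_c)$ (provided by Proposition \ref{p:pends}) with $\tau(M_{\Uf})$. Write the major hole of $\Uf$ as $(\theta_1,\theta_2)$, so $M_{\Uf}=\overline{\theta_1\theta_2}$; since $\tau$ collapses every edge of $\Uf$ to a single point, $\tau(\theta_1)=\tau(\theta_2)=\tau(M_{\Uf})$. Using the density of $\widehat X$ in $X=\Uf\cap\uc$, I would choose sequences $\beta^{(1)}_n,\beta^{(2)}_n\in\widehat X$ approaching $\theta_1$ and $\theta_2$ respectively from the $X$-side (that is, not through the major hole); then by continuity of $\tau$, $\tau(\beta^{(j)}_n)\to\tau(M_{\Uf})$.

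For each $n$, the $K$-rays $R_f(\beta^{(1)}_n)$ and $R_f(\beta^{(2)}_n)$ land in $K^*$ by definition of $\widehat X$, and together with $K^*$ they split $\C\setminus K^*$ into two open pieces. By Lemma \ref{l:inv-quad}, the set $Y$ lies in the closure of the major hole, so every $K$-ray landing in $\wt K^*$ has argument lying between $\beta^{(1)}_n$ and $\beta^{(2)}_n$ on the major-hole side; consequently $\wt K^*$ is contained in the component $\Omega_n$ whose boundary includes the arc of $K^*$ on the major-hole side. Since $D_c\supset \wt K^*$ is connected and disjoint from the two $K$-rays (which lie in $\C\setminus K$), all of $D_c$ lies in $\Omega_n$.

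Transferring by $\psi^*$, the region $\psi^*(\Omega_n)$ is the open simply connected set in $\C\setminus\cdisk$ bounded by $\psi^*(R_f(\beta^{(1)}_n))$, $\psi^*(R_f(\beta^{(2)}_n))$, and the arc $A_n\subset\uc$ running from $\tau(\beta^{(1)}_n)$ to $\tau(\beta^{(2)}_n)$ through $\tau(M_{\Uf})$. Any accumulation point of $\psi^*(D_c)$ on $\uc$ therefore lies in $A_n$ for every $n$; as $A_n$ contracts to $\{\tau(M_{\Uf})\}$, this forces $\al(D_c)=\tau(M_{\Uf})$.

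The main obstacle I expect is the combinatorial separation in the second paragraph, particularly in the regular critical case, where $\theta_1,\theta_2$ need not be (pre)periodic and the rays $R_f(\theta_j)$ may not even land, so that one genuinely needs to pass through the approximating sequences $\beta^{(j)}_n$. Lemma \ref{l:inv-quad}'s placement of $Y$ inside the closure of the major hole is precisely what powers the separation, but one has to check carefully that no piece of $K\setminus K^*$ meeting $D_c$ sneaks from one component of $\C\setminus(K^*\cup R_f(\beta^{(1)}_n)\cup R_f(\beta^{(2)}_n))$ into the other; this will use the connectedness of $D_c$ together with the fact that the two bounding $K$-rays lie entirely in $\C\setminus K$.
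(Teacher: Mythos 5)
Your proof is correct and is essentially the paper's own argument run in the direct rather than the contrapositive direction: both cut the plane along $K$-rays with arguments in $\wh X$ and use Lemma \ref{l:inv-quad} (that $Y$ lies in the closure of the major hole) to confine $\wt K^*\subset D_c$ to the major-hole side, the paper deducing a contradiction from a putative wrong quadratic argument while you squeeze the accumulation set of $\psi^*(D_c)$ by nested arcs shrinking to $\tau(M_{\Uf})$. The details you flag (choosing the approximating angles in $\wh X$ with $\tau$-images converging to $\tau(M_{\Uf})$ from both sides, and identifying the boundary arc of $\psi^*(\Omega_n)$ on $\uc$ as the one through $\tau(M_{\Uf})$, e.g.\ by a third ray $R_f(\gamma)$ with $\gamma\in\wh X$ on the opposite side) are routine and no less explicit than what the paper itself leaves implicit.
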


\begin{proof}
If the quadratic argument of $D_c$ is not
$\tau(M_{\Uf})$, then there is an edge/vertex $v$ of $\Uf$
such that $\tau(v)$ is the quadratic argument of $D_c$ and
we can find angles $\al, \be\in \wh X$ such that the arc
$I=(\al, \be)$ contains $v$ but does not contain the endpoints of
$M_{\Uf}$. For $K$-rays $R(\al), R(\be)$ with arguments $\al, \be$, consider the
component $W$ of $\C\sm [R(\al)\cup R(\be)\cup K^*]$ containing
$K$-rays with arguments from $I$. By Lemma \ref{l:inv-quad} $Y$
lies in the closure of the major hole of $X$. Hence $\widetilde K(f^*)$
is disjoint from $W$ despite the fact that $\wt K(f^*)\subset D_c\subset W$.
%By Lemma \ref{l:v-nin-k} and Theorem \ref{t:decor},
%$D_c\cap W=\0$. By the above, the corresponding set
%$W$ does not contain $D_c$, a contradiction.
\end{proof}

%Some $K$-rays with arguments from $\Uf$ land at points of
%$K^*$; we want to

In the end of this section we study the issue of landing at points of $K^*$ for
periodic and preperiodic angles from $\Uf$.

%Denote by $n^*_x$ the number of
%components of $K^*\sm \{x\}$ and by $n_x$ the number of components of
%$K\sm \{x\}$. A point $y$ of a topological space $Y$ is
%a \emph{cutpoint} of $Y$ if $Y\sm \{x\}$ is disconnected.

\begin{lem}\label{l:who-land}
Let $\al\in \Uf$ be a (pre)periodic angle that never maps to an
endpoint of the major $M_{\Uf}$ of $\Uf$. Suppose that the $K$-ray $R(\al)$ with
argument $\al$ lands at a point $x$. Then $x\in K^*$.
\end{lem}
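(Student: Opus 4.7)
My plan is to argue by contradiction using a shrinking-sandwich argument around $R(\al)$. First I would pass to a forward iterate to reduce to the case that $\al$ is periodic, let $x$ be the landing point of $R(\al)$, and assume for contradiction that $x \notin K^*$, so $x$ lies in some decoration $D \subset K \sm K^*$.

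Using density of $\widehat X$ in $X = \Uf \cap \uc$ together with the hypothesis that the $\si_3$-orbit of $\al$ avoids both endpoints of $M_\Uf$, I would choose $\al_n^{-}, \al_n^{+} \in \widehat X$ with $\al_n^{-} \nearrow \al$ and $\al_n^{+} \searrow \al$ in the circular order on $\uc$; the hypothesis guarantees that $\al$ is not isolated in $X$ from either side (in the corner case that $\al$ is itself a vertex of $\Uf$ at a non-major hole, I would supply the missing side by the partner endpoint of that hole, which inherits the orbit-avoidance property from the $\si_3$-pairing of edge endpoints). The rays $R(\al_n^{\pm})$ then land at points $x_n^{\pm} \in K^*$ by definition of $\widehat X$. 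Setting $\be = \tau(\al)$, the $\si_2$-orbit of $\be$ avoids $m = \tau(M_\Uf)$, so by standard quadratic-like theory applied to the quadratic hybrid counterpart of $f^*$ the ray $R^*(\be)$ lands at a repelling periodic point $x^* \in K^*$, and $x_n^{\pm} \to x^*$.

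Letting $W_n$ be the component of $\C \sm (R(\al_n^{-}) \cup R(\al_n^{+}) \cup K^*)$ containing $R(\al)$, I would observe that $D \subset W_n$ for every $n$, since $D$ is connected, lies in $K$, and is disjoint from $K^* \cup R(\al_n^{-}) \cup R(\al_n^{+})$. As $n \to \infty$ the rays $R(\al_n^{\pm})$ converge to $R(\al)$ on compact subsets of $\C$, and the portion of $\bd K^*$ on $\bd W_n$ between $x_n^{-}$ and $x_n^{+}$ collapses to $\{x^*\}$, so $\bigcap_n \ol{W_n} \subset R(\al) \cup \{x, x^*\}$, giving $D \subset R(\al) \cup \{x, x^*\}$. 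Since $D \cap R(\al) = \emptyset$ (because $R(\al) \subset \C \sm K$) and $D$ is connected with $x \ne x^*$, I conclude $D = \{x\}$. But then $\ol D = \{x\}$ has $\ol D \cap K^* = \emptyset$, contradicting Lemma \ref{l:triv}, which forces $\ol D \cap K^*$ to be a non-empty continuum. Hence $x \in K^*$.

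The hard part will be justifying the continuity $x_n^{\pm} \to x^*$ together with the collapse of the $\bd K^*$-portion of $\bd W_n$ to $\{x^*\}$; both amount to local connectedness of $K^*$ at the repelling periodic landing point $x^*$, which should be extracted from the orbit-avoidance hypothesis on $\be$ via prime-end theory applied to the quadratic counterpart of $f^*$. Handling the case where $\al$ is itself a vertex of $\Uf$, via the partner-endpoint trick, is a further technical point to verify.
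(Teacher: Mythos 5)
Your argument has a genuine gap at exactly the point you flag as ``the hard part'', and that part is not extractable from routine prime-end theory in this setting. The collapse of the portion of $\partial K^*$ on $\partial W_n$ to $\{x^*\}$, and the convergence $x_n^{\pm}\to x^*$, are both equivalent to degeneracy of the impression of the prime end of $K^*$ at the angle $\be=\tau(\al)$ (equivalently, local connectivity of $K^*$ at $x^*$ together with continuity of landing points there). Since $K^*$ may contain a Cremer fixed point, such degeneracy is precisely the deep issue of this paper: it is only obtained much later (Theorem \ref{t:allc}), only for \emph{periodic} points, and only by invoking the mother-hedgehog results of \cite{chi08} and the a priori bounds of \cite{dl21}. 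Standard prime-end/Carath\'eodory theory gives you the landing of $R^*(\be)$ at a repelling or parabolic (pre)periodic point, but it does not give shrinking of the boundary arcs $A_n$ nor $x_n^\pm\to x^*$; without that, $\bigcap_n\ol{W_n}$ may contain a nondegenerate piece of $K^*$ together with a decoration accumulating on it, and no contradiction arises. So the proposal, as written, silently imports a statement of the same depth as the paper's main analytic input, at a stage where it is unavailable.

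Two secondary points. First, the opening reduction ``pass to a forward iterate to reduce to the periodic case'' is not free: knowing $f^k(x)\in K^*$ only gives $x\in f^{-k}(K^*)$, and excluding $\wt K^*$ and its further preimages is itself the nontrivial step; the paper does this via Lemma \ref{l:inv-quad} ($Y$ lies in the closure of the major hole, so otherwise $\al$ would be an endpoint of $M_{\Uf}$). Second, your partner-endpoint trick in the corner case assumes the ray at the partner endpoint lands in $K^*$, which is another instance of the very lemma being proved. By contrast, the paper's proof is purely combinatorial and needs none of the shrinking: it treats the fixed angles $0,\tfrac12$ directly, runs the induction step just described, and, for angles never mapping to fixed angles, uses that $R^*(\tau(\al))$ lands at some (pre)periodic $x'\in K^*$, takes a $K$-ray $R(\ga)$ landing at $x'$, and identifies $\ga=\al$ by an itinerary argument inside $\Uf$, using that $\si_3|_{\widehat X}$ is two-to-one and the uniqueness of the semiconjugacy $\tau$. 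You should either rework your argument along those combinatorial lines or be explicit that your route requires a degenerate-impression theorem that this lemma cannot rely on.
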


In our notation the claim of the lemma simply means that $\al\in \widehat X$.

\begin{proof}
Let $\al=0\in \Uf$, yet $x\notin K^*$.
Let $y\in K^*$ be the fixed point at which $R^*(0)$ lands. Then the only $K$-ray that can land
at $y$ is $R(\frac12)$ which implies that $M_{\Uf}=\ol{0 \frac12}$, a contradiction with the
assumptions of the lemma. Similarly, if $\al=\frac12$, then $x\in K^*$.

We claim that the lemma holds for a (pre)periodic angle $\al\in \Uf$ if it holds for $\be=\si_3(\al)$.
By way of contradiction assume that $x\notin K^*$; then $x\in \wt K^*$, $\al\in Y$ and, by Lemma \ref{l:inv-quad},
$Y$ is contained in the closure of the major hole of $\Uf$. It follows that $\al$ is an endpoint of $M(\Uf)$,
a contradiction with the assumptions of the lemma. By the first paragraph we conclude that the lemma holds
if $\al$ eventually maps to a $\si_3$-fixed angle.

We claim that if $\be\in \Uf\cap \widehat X$ and $\si_3(\al)=\be$ then $\al\in \widehat X$. Indeed,
by Lemma \ref{l:inv-quad} $\si_3|_{\widehat X}$ is two-to-one. If $\al\notin \widehat X$ then all three
$\si_3$-preimages of $\be$ belong to $\Uf$. However this is only possible if two of them are endpoints
of $M_{\Uf}$. It follows that $\al$ is an endpoint of $M_{\Uf}$, again a contradiction.
This and the first paragraph of the proof imply the claim of the lemma if $\al$ eventually maps to a $\si_3$-fixed
angle.

Assume now that $\al$ never maps to a $\si_3$-fixed angle. The angle $\tau(\al)$ is (pre)periodic under $\si_2$
and never maps to $0$ under iterations of $\si_2$.
The $K^*$-ray $R^*(\tau(\al))$ with quadratic argument $\tau(\al)$ lands at a point $x'\in K^*$. Let a $K$-ray $R(\gamma)$
land at $x'$. We claim that $\al=\ga$. Indeed, $\al\in \Uf$ by the assumptions, and $\ga\in \Uf$
since $R(\gamma)$ lands at $x'\in K^*$. Moreover, neither $\al$ nor $\ga$ ever map to $0$ or $\frac12$. It now follows
from the construction that both angles (recall that $\al$ and $\ga$ belong to $\Uf$) behave in the same fashion with respect
to the partition of $\Uf$ in two arcs by the $\si_3$ point that belongs to $\Uf$, and its preimage in $\Uf$ (or, in
the appropriate exceptional cases, by the major $M(\Uf)=\ol{0 \frac12}$ and its preimage-edge in $\Uf$).
Together with the assumptions of the lemma that $\al$ never maps to an
endpoint of the major $M_{\Uf}$ this implies that $\al=\ga$
and hence the landing point $x$ of $R(\al)$ belongs to $K^*$ as desired.
\end{proof}

\section{Sectors}

Suppose that $f$ is an immediately renormalizable cubic polynomial.
Define a few objects depending on $f$ and denote them with
$f$ as the subscript; yet, if $f$ is fixed, we may omit it from notation.
Consider a pair of external rays $R(\al)$, $R(\be)$ landing in $K^*$.
%(we will also skip $f$ and write $R(\al)$, $R(\be)$ whenever $f$ is clear from the context).
The set $\Sigma_f=K^*\cup R(\al)\cup R(\be)$ divides the plane into two components, one
of which contains all external rays with arguments in $(\al,\be)$ and the other contains all external rays with arguments in $(\be,\al)$.
To formally justify this claim, collapse $K^*$ to a point (i.e., consider the equivalence relation $\sim$
on $\ol\C$, whose classes are $K^*$ and single points in $\ol\C\sm K^*$).
By Moore's theorem, the quotient space $\ol\C/\sim$ is homeomorphic to the sphere.
The image of $\Sigma_f(\al, \be)$ under the quotient map, together with the image of the point at infinity, form a Jordan curve.
The statement now follows from the Jordan curve theorem.
%Set
%$$
%\Sigma_f(\alpha,\beta)=K^*\cup R(\al)\cup R(\be).
%$$
Let $S^\circ(\al,\be)_f$ be the component of $\C\sm\Sigma_f(\al,\be)$ containing all external rays with arguments in $(\al,\be)$.
%(We will write $\Sigma_f(\al,\be)$ and $S^\circ_f(\al,\be)$ when we need to indicate the dependence on $f$).
Observe that $S^\circ(\al,\be)$ is defined only if the rays $R(\al)$, $R(\be)$ both land in $K^*$.
The sets $S^\circ(\al,\be)$ will be called \emph{open sectors}, and the sets $\Sigma(\al,\be)$ will be called \emph{cuts}.
Images of sectors contain $K^*$ iff sectors contain $\wt K^*$.

\def\arg{\mathrm{arg}}

An open sector $S^\circ(\al,\be)$ is associated with its \emph{argument arc} $(\al,\be)\subset\R/\Z$
%By definition, the argument arc
that consists of arguments of all rays included in $S^\circ(\al,\be)$.
Note, %however,
that %in general
this sector does not have to coincide with the union of those rays as
open sectors may contain %points of the Julia set, in particular, some
decorations. More generally, consider a subset $T\subset\C$.
We call the set $T_f$ \emph{$(f)$-radial} %if $f$ needs to be specified)
if any ray intersecting $T$ lies in $T$.
For a radial set $T$ we can define the \emph{argument set} $\arg(T)$ of $T$ as the set of all $\ga\in\R/\Z$ with $R(\ga)\subset T$.
Every open sector is a radial set, whose argument set is an open arc. %interval.
%We will write $\si_3$ for the angle tripling map on $\R/\Z$, that is, $\si_3(\al)=3\al$.
It is clear that, for any radial set $T$, we have
$$
\arg(f(T))=\si_3(\arg(T)),\quad \arg(f^{-1}(T))=\si_3^{-1}(\arg(T)).
$$

The following properties of open sectors are almost immediate.

\begin{lem}
  \label{l:pb-opsec}
 Let $S^\circ$ be an open sector and $T^\circ$ an $f$-pullback of $S^\circ$.
Then $\arg(T^\circ)$ is the union of one or several components of $\si_3^{-1}(\arg(S^\circ))$.
The number of critical points in $T^\circ$ equals the number of components minus one.
If $\om_2\notin T^\circ$ and the closure of $T^\circ$ intersects $K^*$, then $T^\circ$ is an open sector mapping 1-1 onto $S^\circ$.
Any pullback of $S^\circ$ is disjoint from $K^*$.
\end{lem}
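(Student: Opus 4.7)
The plan is to analyze the pullback $T^\circ$ by combining the radial structure of sectors with a Riemann--Hurwitz count, and then, under the extra hypotheses, to pin down $\partial T^\circ$ and show that $T^\circ$ fills a full sector.

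For the first claim: since $f$ sends each dynamic ray $R(\gamma)$ bijectively onto $R(\si_3(\gamma))$, the set $f^{-1}(S^\circ)$ is radial, and so is each of its components; in particular $\arg(T^\circ)\subset\si_3^{-1}(\arg(S^\circ))$ is well defined. If $\gamma\in\si_3^{-1}(\arg(S^\circ))$ lies in the same component $J$ as some $\gamma_0\in\arg(T^\circ)$, then near $\infty$ one can connect $R(\gamma)$ to $R(\gamma_0)$ by an arc in $\C\sm K$ whose argument sweeps a subarc of $J$; since $J$ contains no $\si_3$-preimage of $\al$ or $\be$, this arc crosses none of the rays in $f^{-1}(R(\al)\cup R(\be))$ and so lies in the same pullback.

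For the second claim: setting $E=f^{-1}(\Sigma)=K^*\cup\wt K^*\cup f^{-1}(R(\al))\cup f^{-1}(R(\be))$, the set $E\cup\{\infty\}\subset\ol\C$ is connected, because $K^*$ and $\wt K^*$ are continua each meeting at least one preimage ray and every preimage ray reaches $\infty$. Consequently every component of $\C\sm E$, and in particular $T^\circ$, is simply connected in $\ol\C$; an analogous simpler argument shows $S^\circ$ is a disk. If $k$ is the number of components of $\arg(T^\circ)$, then $f|_{T^\circ}\colon T^\circ\to S^\circ$ is a proper branched cover of degree $k$ (count rays over a ray in $S^\circ$), and Riemann--Hurwitz on disks gives $1=k-\sum_{x\in T^\circ}(e_x-1)$; since the critical points of the cubic $f$ are simple, the sum equals the number of critical points in $T^\circ$.

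For the third claim, note that $\om_1\in K^*$ and $f(K^*)=K^*\subset\Sigma$, so $K^*\cap f^{-1}(S^\circ)=\emptyset$; in particular $\om_1\notin T^\circ$, which also yields the fourth claim. Assuming $\om_2\notin T^\circ$, the second claim forces $k=1$ and makes $f|_{T^\circ}$ a biholomorphism onto $S^\circ$. Write $\arg(T^\circ)=(\al',\be')$ with $\si_3(\al')=\al$ and $\si_3(\be')=\be$. The rays $R(\al'),R(\be')$ land at points $p_1,p_2\in f^{-1}(K^*)=K^*\cup\wt K^*$, and a prime-end extension of $f|_{T^\circ}$ sends the connected arc of prime ends of $S^\circ$ with impressions in $K^*$ onto a connected set of impressions of $T^\circ$ lying in the disjoint union $K^*\cup\wt K^*$ and having endpoints $p_1,p_2$; the hypothesis $\ol{T^\circ}\cap K^*\ne\emptyset$ forces this arc to lie entirely in $K^*$, so $p_1,p_2\in K^*$ and $T^\circ\subset S^\circ(\al',\be')$. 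To get equality $T^\circ=S^\circ(\al',\be')$, one rules out $\wt K^*\subset S^\circ(\al',\be')$: any pullback of $S^\circ$ or $S^{\circ\prime}$ contained in $S^\circ(\al',\be')$ would have argument set inside the single component $(\al',\be')$ of $\si_3^{-1}(\arg(S^\circ))$, hence $k=1$ and no critical points, forcing $\om_2\notin S^\circ(\al',\be')$; connectedness of the decoration $D_c\ni\om_2$, which contains $\wt K^*$ and is disjoint from $\Sigma(\al',\be')$, then places $\wt K^*$ on the same side as $\om_2$, outside $S^\circ(\al',\be')$.

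The main technical obstacle is the third claim, where two boundary-behavior issues must be resolved: identifying the ``inner'' part of $\partial T^\circ$ as a connected subset of $K^*\cup\wt K^*$ (to force $p_1,p_2\in K^*$), and then ruling out an extra $\wt K^*$ inside $S^\circ(\al',\be')$ (to ensure $T^\circ$ fills the whole sector). Both steps rely on the rigidity of the biholomorphism $f|_{T^\circ}$ and the connectedness of $D_c$.
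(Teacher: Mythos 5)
Your proposal is correct in substance, and for the first, second and fourth claims it follows essentially the paper's own route (degree of the proper map $f|_{T^\circ}$ equals the number of components of $\arg(T^\circ)$, Riemann--Hurwitz gives the critical point count, and $K^*\cap f^{-1}(S^\circ)=\0$ since $f(K^*)=K^*\subset\Sigma$); your extra verification that $T^\circ$ and $S^\circ$ are simply connected is a useful detail the paper leaves tacit. For the third claim, however, you take a genuinely different route. The paper argues directly on the two boundary rays $R(\al')$, $R(\be')$ of $T^\circ$: their landing points lie in $K^*\cup\wt K^*$, the case of both landing in $\wt K^*$ is excluded by the hypothesis $\ol{T^\circ}\cap K^*\ne\0$, the mixed case is excluded by a Janiszewski-type separation argument ($K^*\cup R(\al')$ and $\wt K^*\cup R(\be')$ are disjoint closed non-separating sets whose union cannot separate the plane), and then $T^\circ=S^\circ(\al',\be')$ is asserted essentially ``by definition.'' You instead transport the problem to prime ends: the conformal isomorphism $f|_{T^\circ}\to S^\circ$ induces a homeomorphism of prime-end circles, the union of impressions over the relevant closed arc is a continuum inside the disjoint union $K^*\sqcup\wt K^*$ containing both landing points, and $\ol{T^\circ}\cap K^*\ne\0$ forces it into $K^*$. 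This buys a uniform treatment of the ``both in $\wt K^*$'' and mixed cases at the cost of invoking a fair amount of Carath\'eodory theory (free boundary arcs along the rays, upper semicontinuity of impressions) that would need to be spelled out. Your final step, ruling out $\wt K^*\subset S^\circ(\al',\be')$ via the critical-point count plus connectedness of $D_c\ni\om_2,\wt K^*$, addresses a point the paper passes over silently, and it is sound once two small gaps are filled: a pullback of the complementary sector $S^{\circ\prime}$ lying in $S^\circ(\al',\be')$ is excluded because every pullback maps properly onto its sector and hence contains external rays, so its argument set cannot be empty; and the passage from ``$\wt K^*\not\subset S^\circ(\al',\be')$'' to equality should be stated explicitly: $S^\circ(\al',\be')$ is then disjoint from $f^{-1}(\Sigma)$ (it misses $K^*$, $\wt K^*$, and all six preimage rays), hence is a connected subset of $f^{-1}(S^\circ)\cup f^{-1}(S^{\circ\prime})$ meeting $T^\circ$, so it is contained in, and therefore equal to, $T^\circ$. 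These are routine repairs, not flaws in the approach.
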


\begin{proof}
The first claim ($\arg(T^\circ)$ is a union components of $\si_3^{-1}(\arg(S^\circ))$) is immediate.
Note that $f:T^\circ\to S^\circ$ is proper.
Therefore, this map has a well-defined degree.
The degree is clearly equal to the number of components in $\arg(T^\circ)$.
On the other hand, by the Riemann--Hurwitz formula, the degree is equal to the number of critical points in $T^\circ$ plus one.
Thus the second claim follows.

Let us prove the third claim of the lemma.
The only critical point that can lie in $T^\circ$ is $\om_2$.
Since we assume that $\om_2\notin T^\circ$, then $\arg(T^\circ)$ has only one component.
Let $\arg(T^\circ)=(\al,\be)$.
Then $T^\circ$ is bounded by $R(\al)\cup R(\be)$ and %some
a part of $K^*\cup \wt K^*$.
If both $R(\al)$ and $R(\be)$ land in $K^*$, then, by definition, $T^\circ$ coincides with the open sector $S^\circ(\al,\be)$.
If both $R(\al)$ and $R(\be)$ land in $\wt K^*$, then $T^\circ$ is disjoint from $K^*$
as its image $S^\circ$ does not contain $K^*$; this implies that $\ol{T^\circ}$ is disjoint with $K^*$, a contradiction
with our assumptions.
%Indeed, otherwise, the degree of $f|_{T^\circ}$ must be two, and  $\om_2\in T^\circ$.
Finally, if one of the two rays $R(\al)$, $R(\be)$ lands in $K^*$ and the other lands in $\wt K^*$, then we get a contradiction too.
Suppose, say, that $R(\al)$ lands in $K^*$ and $R(\be)$ lands in $\wt K^*$.
The sets $K^*\cup R(\al)$ and $\wt K^*\cup R(\be)$ are closed disjoint non-separating sets, whose union cannot separate the plane.
Thus the only possibility is that $T^\circ=S^\circ(\al,\be)$ and $f:T^\circ\to S^\circ$ is a homeomorphism.
The last claim of the lemma now easily follows. %This completes the proof.
\end{proof}

\begin{lem}
  \label{l:img-sec}
  Consider an open sector $S^\circ(\al,\be)$, whose argument arc is mapped one-to-one under $\si_3$.
Then $f(S^\circ(\al,\be))=S^\circ(3\al,3\be)$.
Moreover, $S^\circ(\al,\be)$ maps one-to-one onto $S^\circ(3\al,3\be)$.
\end{lem}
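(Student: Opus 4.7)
The plan is to realize $S^\circ(\al,\be)$ as an $f$-pullback of $S^\circ(3\al,3\be)$ and then apply the third clause of Lemma \ref{l:pb-opsec}. The one-to-one hypothesis on $\si_3|_{(\al,\be)}$ forces the arc $(\al,\be)$ to have length strictly less than $1/3$, so $\si_3^{-1}((3\al,3\be))$ splits into three pairwise disjoint open arcs: $(\al,\be)$, $(\al+1/3,\be+1/3)$, and $(\al+2/3,\be+2/3)$.

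First, I would fix any $\ga\in(\al,\be)$. The ray $R(\ga)$ lies in $S^\circ(\al,\be)$, and $f(R(\ga))=R(3\ga)\subset S^\circ(3\al,3\be)$, so $R(\ga)$ is contained in some pullback $T^\circ$ of $S^\circ(3\al,3\be)$. By Lemma \ref{l:pb-opsec}, $\arg(T^\circ)$ is a union of components of $\si_3^{-1}((3\al,3\be))$ and contains $\ga$, so $(\al,\be)\subseteq \arg(T^\circ)$. Next I would verify $T^\circ\subseteq S^\circ(\al,\be)$: as a pullback, $T^\circ$ is disjoint from $f^{-1}(K^*)=K^*\cup\wt K^*$ and from $f^{-1}(R(3\al)\cup R(3\be))\supset R(\al)\cup R(\be)$, so $T^\circ$ avoids the boundary $K^*\cup R(\al)\cup R(\be)$ of $S^\circ(\al,\be)$; since it meets $R(\ga)\subset S^\circ(\al,\be)$, connectedness pins it inside.

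The main obstacle is ruling out $\om_2\in T^\circ$. By Lemma \ref{l:pb-opsec}, the number of critical points in $T^\circ$ equals the number of components of $\arg(T^\circ)$ minus one, so if $\om_2\in T^\circ$, then $\arg(T^\circ)$ has two components: $(\al,\be)$ together with either $(\al+1/3,\be+1/3)$ or $(\al+2/3,\be+2/3)$. Then $T^\circ$ would contain a ray with argument outside $(\al,\be)$, contradicting $T^\circ\subseteq S^\circ(\al,\be)$. This is exactly where the one-to-one hypothesis is consumed.

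With $\om_2\notin T^\circ$ and $\ol{T^\circ}$ meeting $K^*$ (the landing points of $R(\al)$ and $R(\be)$ belong both to $K^*$ and to $\ol{T^\circ}$), the third clause of Lemma \ref{l:pb-opsec} gives that $T^\circ$ is an open sector mapping one-to-one onto $S^\circ(3\al,3\be)$. Since the open sector with argument arc $(\al,\be)$ is unique by construction, $T^\circ=S^\circ(\al,\be)$, and both claims of the lemma follow.
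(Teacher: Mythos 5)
Your proposal is correct and follows essentially the same route as the paper: both take the pullback $T^\circ$ of $S^\circ(3\al,3\be)$ containing the rays with arguments in $(\al,\be)$, show $T^\circ\subset S^\circ(\al,\be)$ (since $T^\circ$ misses $K^*\cup R(\al)\cup R(\be)$, with $R(\al)$, $R(\be)$ on its boundary landing in $K^*$), and then use the component/critical-point count of Lemma \ref{l:pb-opsec} to exclude $\om_2\in T^\circ$ and conclude $T^\circ=S^\circ(\al,\be)$ maps one-to-one onto $S^\circ(3\al,3\be)$. Your write-up just makes explicit a few steps the paper labels ``clearly.''
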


\begin{proof}
  Let $T^\circ$ be the $f$-pullback of $S^\circ(3\al,3\be)$ that includes rays with arguments in $(\al,\be)$.
Clearly, the rays $R(\al)$, $R(\be)$ are on the boundary of $T^\circ$.
Since these rays land in $K^*$ and $T^\circ\cap K^*=\0$, we must have $T^\circ\subset S^\circ(\al,\be)$.
On the other hand, if $T^\circ\ne S^\circ(\al,\be)$, then, by Lemma \ref{l:pb-opsec},
 the arguments of rays in $T^\circ$ form two intervals of $\R/\Z$ rather than one.
A contradiction.
Therefore, $T^\circ=S^\circ(\al,\be)$ as desired.
\end{proof}

\begin{dfn}[minimal sectors]
  Let $x$ be a point outside of $K^*$.
The \emph{minimal sector} $S(x)$ of $x$ is defined as the intersection of all $\ol {S^\circ}(\al,\be)$ such that $x\in S^\circ(\al,\be)$.
\end{dfn}

Note that, by definition, a minimal sector is always a closed set. It is clear
from the definition that a minimal sector is bounded by at most two external rays and a piece of $K^*$.
It may coincide with the union of a single ray and its impression. The next lemma shows that
minimal sectors are related to decorations and immediately follows from the definitions. Recall that if $x\notin K^*$ then $D(x)$ is the decoration
containing $x$. Recall also that the map $\tau:\Uf\to \uc$ collapses to points all edges
of the gap $\Uf$ and semiconjugates $\si_3|_{\bd(\Uf)}$ and $\si_2$.

\begin{lem}\label{l:dec-sec}
Let $x\notin K^*$. Consider the edge (possibly degenerate) $\ol{ab}=\tau^{-1}(\al(D(x)))$ of $\Uf$. Then
$S(x)\sm K^*$ is the union of all decorations with quadratic argument $\al(D(x))$ and all $K$-rays with arguments
from the complementary arc of $\bd(\Uf)$ in $\uc$ with endpoints $a$ and $b$.
%the same endpoints as $\ell$.
\end{lem}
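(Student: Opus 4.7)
My plan is to transfer the analysis to the $z^2$-plane via $\psi^*$, using the semiconjugacy $\tau:\bd(\Uf)\to\uc$ to translate the combinatorial structure of sectors into quadratic arguments. The key intermediate fact I need is the following: for an open sector $S^\circ(\gamma_1,\gamma_2)$ with $\gamma_1,\gamma_2\in \wh X$, the set $\psi^*(S^\circ(\gamma_1,\gamma_2)\cap U^*)$ accumulates on exactly the open arc of $\uc$ with endpoints $\tau(\gamma_1)$, $\tau(\gamma_2)$ that contains the $\tau$-images of the arguments of all $K$-rays lying in the sector. Consequently, a decoration $D'$ is contained in $S^\circ(\gamma_1,\gamma_2)$ if and only if $\al(D')$ lies in that arc: one direction follows from Proposition \ref{p:pends}, and the other from connectedness of $D'$ and the fact that $\bd(S^\circ(\gamma_1,\gamma_2))$ lies in the union of two external rays and a piece of $K^*$, all disjoint from $D'$.

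For the inclusion $\supseteq$: any open sector containing $x$ must contain $D(x)$ by connectedness, so $D(x)\subset S(x)$. For any decoration $D'$ with $\al(D')=\al(D(x))$, the correspondence just described shows $D'\subset S^\circ(\gamma_1,\gamma_2)$ for every sector containing $x$, hence $D'\subset S(x)$. For a $K$-ray $R(\gamma)$ with $\gamma$ in the complementary arc of $\bd(\Uf)$ with endpoints $a$ and $b$: if $S^\circ(\gamma_1,\gamma_2)\ni x$ then $\tau(\gamma_1),\tau(\gamma_2)$ bracket $\al(D(x))$ in $\uc$, so $\gamma_1$ and $\gamma_2$ must lie on opposite sides of the edge $\ol{ab}$ within $\bd(\Uf)$. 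Consequently, the whole hole $(a,b)$ of $\Uf$ is contained in the argument arc of the sector, so $R(\gamma)\subset S^\circ(\gamma_1,\gamma_2)$.

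For the inclusion $\subseteq$: let $y\in S(x)\setminus K^*$. Using density of $\wh X$ in $X=\Uf\cap\uc$ (which holds because (pre)periodic angles are dense in $\bd(\Uf)\cap\uc$), I produce sectors $S^\circ(\gamma_1,\gamma_2)\ni x$ with $\gamma_1$ and $\gamma_2$ approaching $a$ and $b$ respectively from within $\bd(\Uf)\setminus\{a,b\}$. If $y$ lies in a decoration $D'$ with $\al(D')\ne \al(D(x))$, then for sufficiently tight choices $\al(D')\notin(\tau(\gamma_1),\tau(\gamma_2))$, so $D'\not\subset S^\circ(\gamma_1,\gamma_2)$, contradicting $y\in S(x)$. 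If $y$ lies on a $K$-ray $R(\gamma)$ with $\gamma$ outside the closed arc $[a,b]$, the same tight choices put $\gamma$ outside the argument arc of the sector, excluding $R(\gamma)$.

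The main obstacle is making precise the correspondence between open sectors in the $z^3$-plane and arcs on $\uc$ in the $z^2$-plane; this requires a careful argument with $\psi^*$ and the explicit definition of $\tau$ from Section \ref{s:cubarg}. A small amount of extra care is also needed in the degenerate case $a=b$, when $\al(D(x))$ is the $\tau$-image of a vertex rather than of a collapsed edge: here the complementary arc is empty, no $K$-rays appear in $S(x)\setminus K^*$, and the approximating $\gamma_1, \gamma_2$ accumulate to the single vertex $a=b$ from both sides within $\bd(\Uf)$. The argument above handles this case uniformly.
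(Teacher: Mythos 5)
The paper offers no written proof of this lemma (it is declared to ``immediately follow from the definitions''), and your route --- transferring everything by $\psi^*$ to the $z^2$-plane, reading off sector membership through $\tau$, and shrinking sectors with endpoints $\gamma_1,\gamma_2\in\wh X$ straddling the edge $\ol{ab}$ --- is exactly the natural way to fill it in. The $\subseteq$ direction and the ray part of $\supseteq$ are essentially fine as you argue them. The problem is your key ``iff'': containment $D'\subset S^\circ(\gamma_1,\gamma_2)$ only forces $\al(D')$ into the \emph{closed} arc $[\tau(\gamma_1),\tau(\gamma_2)]$, while only the implication ``$\al(D')$ in the \emph{open} arc $\Rightarrow D'\subset S^\circ$'' is valid. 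This matters precisely when a sector containing $x$ has a boundary ray whose argument is an endpoint $a$ or $b$ of the edge; this can happen, since for $\Uf$ of periodic type the edge endpoints are (pre)periodic and may lie in $\wh X$, and then $\tau(\gamma_1)=\al(D(x))$ because $\tau$ collapses $\ol{ab}$. In that situation your argument does not show that another decoration $D'$ with the same quadratic argument (which could a priori nestle against $R(a)$ from the \emph{other} side, since $\psi^*(R(a))$ lands at the very point $e^{2\pi i\al(D(x))}$), nor even that the rays over the hole $(a,b)$, lie in that sector. Ruling out such a separating sector is the only nontrivial content of the lemma, and your proposal passes over it; it needs an explicit argument (e.g., showing that a decoration with argument $\tau(\ol{ab})$ cannot lie on the non-hole side of $R(a)$ or $R(b)$, or that no sector bounded by these rays can contain $D(x)$ while excluding the hole).

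Separately, your treatment of the boundary and degenerate cases is factually off. If $a\in[\gamma_1,\gamma_2]$ for every sector containing $x$ --- which your own bracketing argument gives --- then the ray $R(a)$ lies in every \emph{closed} sector $\ol{S^\circ}(\gamma_1,\gamma_2)$ (each point of a boundary ray is accessible from both complementary components), hence $R(a)\subset S(x)$, and likewise $R(b)$. So in the degenerate case $a=b$ it is not true that ``no $K$-rays appear in $S(x)\setminus K^*$'': the single ray $R(a)$ does. The honest reading is that the complementary arc in the statement should be taken closed (its endpoint rays included), and your proof should account for $R(a)$, $R(b)$ in both inclusions rather than assert their absence.
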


Define the \emph{critical sector} as the minimal sector $S(\om_2)$.

\begin{lem}
 \label{l:noncr-sec}
 Suppose that $\om_2\notin S(x)$.
Then $f(S(x))=S(f(x))$.
\end{lem}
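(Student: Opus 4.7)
The plan is to localize to a sector on which $f$ is a homeomorphism, so that the operation of taking the minimal sector commutes with $f$.

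\emph{First}, I would extract a closed sector $F_0=\ol{S^\circ(\al_0,\be_0)}$ with $x\in S^\circ(\al_0,\be_0)$ and $\om_2\notin F_0$. This is immediate from the definition $S(x)=\bigcap\ol{S^\circ(\al,\be)}$ over open sectors containing $x$: if every such closed sector contained $\om_2$, so would the intersection $S(x)$, contradicting the hypothesis. \emph{Second}, with $\om_2\notin F_0$, the Riemann--Hurwitz count of Lemma~\ref{l:pb-opsec} forces $\si_3$ to be one-to-one on $(\al_0,\be_0)$, and Lemma~\ref{l:img-sec} gives that $f$ maps $F_0$ bijectively onto $\ol{S^\circ(3\al_0,3\be_0)}$. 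Because $f|_{F_0}$ is injective, taking $f$-images commutes with arbitrary set-theoretic intersections inside $F_0$.

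\emph{Third}, I would observe that the open sectors containing $x$ which sit inside $S^\circ(\al_0,\be_0)$ form a cofinal subfamily of all open sectors containing $x$: for any other $S^\circ(\al',\be')\ni x$, picking the innermost of the bounding rays of $S^\circ(\al',\be')$ and $S^\circ(\al_0,\be_0)$ that still enclose $x$ produces an open sector inside $S^\circ(\al',\be')\cap S^\circ(\al_0,\be_0)$ containing $x$. Consequently $S(x)=\bigcap\{\ol{S^\circ(\al,\be)}\subseteq F_0 : x\in S^\circ(\al,\be)\}$. Under $f|_{F_0}$, each such $\ol{S^\circ(\al,\be)}$ maps by Lemma~\ref{l:img-sec} to $\ol{S^\circ(3\al,3\be)}$; conversely, every open sector $S^\circ(\gamma,\delta)\subseteq f(S^\circ(\al_0,\be_0))$ containing $f(x)$ pulls back through the homeomorphism $f|_{F_0}$ to an open sector inside $S^\circ(\al_0,\be_0)$ containing $x$ (Lemma~\ref{l:pb-opsec} ensures the relevant preimage component is a single sector since $\om_2\notin F_0$). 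Applying the same cofinality argument at $f(x)$ inside $f(F_0)$, I obtain
$$
f(S(x))=\bigcap f(\ol{S^\circ(\al,\be)})=\bigcap\ol{S^\circ(\gamma,\delta)}=S(f(x)).
$$

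The step I expect to need the most care is this cofinality claim: verifying that for two open sectors with a common point $x$ one can find a sector containing $x$ inside their intersection. This is a small combinatorial check on the cyclic order of the four bounding external rays on $\uc$, and it is precisely where the hypothesis $\om_2\notin S(x)$ enters, via the initial choice of $F_0$.
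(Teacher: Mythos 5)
Your overall strategy (reduce to a single sector around $x$ on which $f$ is injective, then use cofinality of small sectors to commute $f$ with the intersection defining $S(x)$) is close in spirit to the paper's argument, but your second step contains a genuine gap. You claim that $\om_2\notin F_0=\ol{S^\circ(\al_0,\be_0)}$ ``forces $\si_3$ to be one-to-one on $(\al_0,\be_0)$'' via the Riemann--Hurwitz count of Lemma \ref{l:pb-opsec}. That lemma applies to a \emph{pullback} $T^\circ$ of a sector, where $f:T^\circ\to S^\circ$ is proper and hence has a degree; for a forward restriction $f|_{S^\circ(\al_0,\be_0)}$ properness fails (the $K^*$-part of the boundary maps into the closure of the image), so no degree count is available, and $S^\circ(\al_0,\be_0)$ is not presented as a component of the preimage of any sector. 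Moreover, the implication itself is false: a sector whose argument arc is disjoint from the major hole of $\Uf$ can have argument arc of length up to $1-|I_{\Uf}|>1/3$, and such a sector need not contain $\om_2$ — the critical decoration $D_c$ accumulates only at the prime end corresponding to $\tau(M_{\Uf})$ (Lemma \ref{l:cridi3} and Proposition \ref{p:pends}), hence lies on the other side of the cut — yet $\si_3$ is certainly not injective on an arc of length greater than $1/3$. So your chosen $F_0$ need not map homeomorphically, and everything after that point (commuting $f$ with intersections, pulling sectors back through ``the homeomorphism $f|_{F_0}$'') is unsupported.

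The repair is exactly where the hypothesis $\om_2\notin S(x)$ should enter, and it is not where you place it (the cofinality check). From $\om_2\notin S(x)$ you get $\al(D(x))\ne\al(D_c)$, so by Lemma \ref{l:dec-sec} the edge $\tau^{-1}(\al(D(x)))$ of $\Uf$ is not the major; hence the corresponding hole is shorter than $1/3$ (or degenerate), and sectors containing $x$ with argument arc of length less than $1/3$, which also avoid $\om_2$, exist and are cofinal among sectors containing $x$. Only to such short-arc sectors does Lemma \ref{l:img-sec} apply. This is essentially what the paper does: for $f(S(x))\subset S(f(x))$ it intersects only over sectors with $|\be-\al|<1/3$ and applies Lemma \ref{l:img-sec}; for the reverse inclusion it fixes a short sector around $x$ avoiding $\om_2$ containing a single preimage $y$ of a given $z\in S(f(x))$ and pulls back small sectors around $z$ using Lemma \ref{l:pb-opsec} (where it legitimately applies, to pullbacks) to conclude $y\in S(x)$. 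With your step two replaced by this existence argument for a short-arc $F_0$, your injectivity-based version of the backward inclusion would go through; as written, the proof does not.
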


\begin{proof}
We first prove that $f(S(x))\subset S(f(x))$.
Indeed, $S(x)$ is the intersection of all $\ol{S^\circ}(\al,\be)$ with $x\in S^\circ(\al,\be)$.
The $f$-image of the intersection lies in the intersection of images.
Taking only those $S^\circ(\al,\be)$, for which $|\be-\al|<1/3$, we see by Lemma \ref{l:img-sec} that
 $f(S(x))$ lies in the intersection of $\ol{S^\circ}(3\al,3\be)\ni f(x)$.
The latter set obviously coincides with $S(f(x))$.

We now prove that $S(f(x))\subset f(S(x))$, i.e., every point $z$ in $S(f(x))$ has the form $f(y)$ for some $y\in S(x)$.
Take an open sector $S^\circ(\al,\be)\ni x$ that contains only one $f$-preimage of $z$; call this preimage $y$.
We may also assume that $\om_2\notin S^\circ(\al,\be)$ and that $(\al,\be)$ maps one-to-one under $\si_3$.
Then, by Lemma \ref{l:img-sec}, we have $f(S^\circ(\al,\be))=S^\circ(3\al,3\be)$.
Let $S^\circ$ be any open sector in $S^\circ(3\al,3\be)$ containing $z$.
Then the pullback $T^\circ$ of $S^\circ$ in $S^\circ(\al,\be)$ must be an open sector, and it must contain a preimage of $z$.
The only option is that $y\in T^\circ$.
Since $y$ is contained in all such $T^\circ$, we have $y\in S(x)$.
\end{proof}

\begin{lem}
  \label{l:bdminsec}
  For any $x\in\C\sm K^*$, the rays on the boundary of $S(x)$ map onto the rays on the boundary of $S(f(x))$ under $f$.
\end{lem}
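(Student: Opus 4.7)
By Lemma~\ref{l:dec-sec}, the rays on the boundary of $S(x)$ are precisely $R(a)$ and $R(b)$ (possibly equal, if the edge is a single vertex), where $\ol{ab}=\tau^{-1}(\al(D(x)))$ is the edge or vertex of the invariant quadratic gap $\Uf$ corresponding to the quadratic argument of $D(x)$. Under $f$, these map to $R(\si_3(a))$ and $R(\si_3(b))$, both landing in $K^*$. Using the semiconjugacy $\tau\circ\si_3|_{\bd(\Uf)}=\si_2\circ\tau$ together with Lemma~\ref{l:z2}, one computes
\[
\tau(\si_3(a))=\si_2(\tau(a))=\si_2(\al(D(x)))=\al(D(f(x))).
\]
Hence $\{\si_3(a),\si_3(b)\}\subseteq\tau^{-1}(\al(D(f(x))))$, which is an edge or vertex of $\Uf$ having at most two $\uc$-endpoints; by Lemma~\ref{l:dec-sec} applied to $f(x)$ these endpoints are exactly the boundary arguments of $S(f(x))$, giving the matching of boundary rays.

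For the non-critical case $\om_2\notin S(x)$, Lemma~\ref{l:noncr-sec} already yields $f(S(x))=S(f(x))$; the argument arc of $S(x)$ has length $<1/3$ (only the major hole of $\Uf$ has length $\ge 1/3$, and the corresponding sector contains $\om_2$), so $S(x)$ is exhausted from outside by open sectors $S^\circ(\al_n,\be_n)$ with $|\be_n-\al_n|<1/3$. Lemma~\ref{l:img-sec} ensures each such sector maps homeomorphically onto $S^\circ(\si_3(\al_n),\si_3(\be_n))$, so boundary rays of the approximants go to boundary rays of their images. Passing to the intersection recovers the matching for $S(x)$ and $S(f(x))$, with Lemma~\ref{l:z2} applied to the non-critical decoration $D(x)$ providing $D(f(x))=f(D(x))$.

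In the critical case $\om_2\in S(x)$, so $\ol{ab}=M_\Uf$, $f$ is no longer injective on $S(\om_2)$. Lemma~\ref{l:z2} applied to $D_c$ still yields $\tau(\si_3(a))=\tau(\si_3(b))=\si_2(\al(D_c))=\al(D_v)$, so the images $R(\si_3(a)),R(\si_3(b))$ bound the edge/vertex $\si_3(M_\Uf)=\tau^{-1}(\al(D_v))$; in the subcase of regular critical type $M_\Uf$ one has $\si_3(a)=\si_3(b)$, both boundary rays of $S(\om_2)$ collapse to a single image ray, and the preimage $\tau^{-1}(\al(D_v))$ reduces to the single vertex $\si_3(M_\Uf)$, matching. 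The main obstacle lies here: one must verify that $\al(D(f(x)))=\al(D_v)$ for every $x\in D_c$ with $f(x)\notin K^*$, not merely for $x=\om_2$. Since $f(D_c)=K$ covers every decoration, this is not immediate from Lemma~\ref{l:z2}, and the argument must combine Lemma~\ref{l:z2} with the local conjugacy of $f|_{U^*\sm K^*}$ and $z^2$ via $\psi^*$ in an annular neighborhood of $\uc$, which forces the decoration containing $f(x)$ to have the argument predicted by doubling, regardless of where in $D_c$ the point $x$ lies.
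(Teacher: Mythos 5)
Your route is genuinely different from the paper's: the paper proves this lemma by a short approximation argument entirely inside the sector formalism (one chooses open sectors around $x$ whose argument-arc endpoints approach $\si_3^{-1}(\arg(S(f(x))))$ and passes to the intersection), never invoking the gap $\Uf$, the map $\tau$, or quadratic arguments; you instead identify the boundary rays of $S(x)$ via Lemma \ref{l:dec-sec} and push their arguments forward using $\tau\circ\si_3=\si_2\circ\tau$ together with Lemma \ref{l:z2}. In the non-critical case ($\om_2\notin S(x)$) your computation is correct and amounts to a reformulation of Lemmas \ref{l:noncr-sec} and \ref{l:img-sec}, so that part is fine.

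In the critical case, however, there is a genuine gap, and the patch you sketch rests on a false claim. You need $\al(D(f(x)))=\al(D_v)$ for every $x\in D_c$ with $f(x)\notin K^*$, and you assert that the conjugacy of $f|_{U^*\sm K^*}$ with $z^2$ via $\psi^*$ forces this. It does not. By Theorem \ref{t:decor} (Step 5), every decoration $D\ne D_v$ has a homeomorphic pullback $D_1$ with $\ol{D_1}\sm D_1\subset\wt K^*$, and this $D_1$ is contained in $D_c$; a point $x\in D_1$ therefore lies in $D_c$ while $f(x)\in D$, and $\al(D)$ need not equal $\al(D_v)$ (by Corollary \ref{c:2pbdec} the arguments of decorations include a whole backward $\si_2$-orbit, so decorations with argument different from $\al(D_v)$ certainly exist). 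The uniformization argument cannot rescue this, because $\psi^*$ conjugates $f$ with $z^2$ only on $U^*\sm K^*$, i.e.\ near $K^*$, whereas the problematic points of $D_c$ lie near $\wt K^*$, which is disjoint from $U^*$ (Definition \ref{d:uf}, Lemma \ref{l:inv-quad}). Moreover, for such $x$ one has $S(x)=S(\om_2)$ (any open sector containing one point of $D_c$ contains all of $D_c$), so by Lemma \ref{l:dec-sec} the boundary rays of $S(x)$ are $R(a),R(b)$ with $\ol{ab}=M_{\Uf}$, and their images bound the sector associated with $\al(D_v)$, not with $\al(D(f(x)))$; so the obstacle you flag cannot be argued away in the stated generality. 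Note that the paper only ever uses the lemma for $x=\om_2$ (in Lemma \ref{l:perio-type}), and at that point your Lemma \ref{l:z2} computation, $\al(D_v)=\si_2(\al(D_c))$, does suffice; but as a proof of the statement for all $x\in\C\sm K^*$ your proposal does not go through.
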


\begin{proof}
  Consider an open sector $S^\circ$ around $x$.
Its argument arc covers one or several components of $\si_3^{-1}(\arg(S(f(x))))$.
Moreover, $S^\circ$ can be chosen so that the endpoints of $\arg(S^\circ)$ are arbitrarily close to  $\si_3^{-1}(\arg(S(f(x))))$.
The lemma follows.
\end{proof}

\section{Backward stability}

In this section we study backward stability of decorations. The aim, to
begin with, is to show that under certain circumstances decorations
shrink as we pull them back.
Our arguments are based upon the following theorem of Ma\~n\'e.

\begin{thm}[\cite{man93}]
\label{t:mane} If $f:\ol\C\to\ol\C$ is a rational map and $z\in\ol\C$ a
point that does not belong to the limit set of any recurrent critical
point, then for some Jordan disk $W$ around $z$, some $C>0$ and some
$0<q<1$ the spherical diameter of any component of $f^{-n}(W)$ is less
than $Cq^n$.
\end{thm}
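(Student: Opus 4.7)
The plan is to follow the classical approach to this shrinking lemma via hyperbolic contraction together with degree control on pullbacks. Write $P_r$ for the closure of the union of $\omega$-limit sets of the recurrent critical points of $f$; by hypothesis $z\notin P_r$. The set $P_r$ is closed and forward invariant, so I can choose a Jordan disk $W_0$ around $z$ whose closure is disjoint from $P_r$, and I will look at components $W_n$ of $f^{-n}(W)$ for some $W\Subset W_0$ to be specified.

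First I would try to bound the degree of $f^n:W_n\to W$ uniformly in $n$. Each critical point $c$ of $f$ that contributes to this degree has some forward image landing in $W_n$, equivalently, $f^{k}(c)\in W$ for some $k\le n$. Recurrent critical points are excluded by the choice of $W_0\supset W$ being away from $P_r$. For the finitely many non-recurrent critical points $c$, the $\omega$-limit set $\omega(c)$ is disjoint from the critical orbit, so by shrinking $W$ if necessary one arranges that $\{f^k(c)\}_{k\ge 0}$ meets $W$ at most a bounded number of times for each such $c$. Thus the degree of $f^n:W_n\to W$ is bounded by some constant $N$ independent of $n$.

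Next I would invoke the Koebe distortion theorem: choose $W'\Subset W$ a slightly smaller disk around $z$, and let $W'_n$ be the component of $f^{-n}(W')$ contained in $W_n$. Since $f^n:W_n\to W$ is a proper branched cover of bounded degree $N$, the Koebe-type estimate in the branched case (or equivalently the standard trick of passing through a uniformization of $W_n$ and applying Koebe to the unbranched cover to a $N$-fold cover of $W$) gives bounded distortion of $f^n$ on $W'_n$ up to a multiplicative constant depending only on $N$ and the modulus of the annulus $W\setminus \overline{W'}$.

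The contraction itself comes from the hyperbolic (Poincaré) metric. On the hyperbolic surface $\Omega=\ol\C\setminus (P_r\cup \{\text{finitely many postcritical points}\})$, the map $f$ is a holomorphic covering from $f^{-1}(\Omega)\subset \Omega$ into $\Omega$, and since this inclusion is strict in a definite way (again by the choice of $W_0$ staying away from $P_r$), a Schwarz-Pick argument gives that $f^{-1}$ strictly contracts the hyperbolic metric, and one obtains a uniform factor $q<1$ for the backward iterates over compact subsets of $\Omega$. Applying this to $W'\Subset \Omega$ yields hyperbolic diameter of $W'_n$ bounded by $Cq^n$, and comparing hyperbolic diameter with spherical diameter (using bounded distortion from Koebe above) converts this into the required spherical estimate. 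The main obstacle, as is standard in Mañé's theorem, is the uniform degree bound in the presence of non-recurrent critical points whose orbits may still accumulate on $J(f)$; this is what forces the choice of $W$ to depend on the whole collection of non-recurrent critical orbits, not just on $z$ itself.
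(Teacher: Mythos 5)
The paper does not prove this statement at all: it is quoted verbatim as Ma\~n\'e's theorem and attributed to \cite{man93}, so there is no internal proof to compare with; your sketch has to be judged against the actual content of Ma\~n\'e's argument, and there it has a genuine gap.

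The gap is the uniform degree bound, which is the heart of the theorem rather than a preliminary reduction. The hypothesis only excludes $z$ from the limit sets of \emph{recurrent} critical points; it is entirely possible that $z\in\om(c)$ for a \emph{non-recurrent} critical point $c$, in which case the orbit of $c$ enters \emph{every} neighborhood $W$ of $z$ infinitely many times, and no shrinking of $W$ can arrange "at most a bounded number of visits''. Your supporting claim that for a non-recurrent critical point "$\om(c)$ is disjoint from the critical orbit'' is also false: non-recurrence only says $c\notin\om(c)$, while forward images of $c$ can perfectly well lie in $\om(c)$ (e.g.\ when $f(c)$ is a repelling fixed point). Bounding the degree of $f^n:W_n\to W$ in the presence of infinitely many returns of non-recurrent critical orbits is exactly what Ma\~n\'e's inductive construction (or the Shishikura--Tan Lei alternative proof) accomplishes, and your sketch replaces it by an assertion that fails in the critical case. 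A secondary problem is the contraction step: the Schwarz--Pick argument gives a contraction factor bounded away from $1$ only on compact subsets of the hyperbolic domain $\Omega$, but the pullback components $W'_n$ need not stay in any fixed compact subset of $\Omega$ (they can approach the postcritical set, where the contraction degenerates), and the domain you describe (complement of $P_r$ plus finitely many postcritical points) need not satisfy $f^{-1}(\Omega)\subset\Omega$ at all, since the full postcritical set is generally infinite. So neither the bounded degree nor the uniform rate $q<1$ is actually established; both require the substance of Ma\~n\'e's proof (and, for the exponential form and near parabolic points, further care), which is why the paper simply cites \cite{man93}.
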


Neighborhoods satisfying Theorem \ref{t:mane} are called \emph{Ma\~ne neighborhoods}.

\begin{lem}
  \label{l:sn}
  Fix $q\in (0,1)$ and $b>0$.
  Consider a sequence of positive numbers $s_n$ such that either $s_{n+1}=qs_n$ or $s_{n+1}\le 2qs_n+b$.
In the former case call $n$ the \emph{good} index, and in the latter case call $n$ the \emph{bad} index.
Suppose that the distance between adjacent bad indices tends to infinity.
Then $s_n\to 0$. % as $n$ runs through all bad indices.
\end{lem}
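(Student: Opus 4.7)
The plan is to partition the index axis into epochs between consecutive bad indices and control the subsequence of values at bad indices first, after which convergence at all indices will follow from the growing good-step contraction within each epoch.

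Let $n_1 < n_2 < \cdots$ enumerate the bad indices. If this sequence is finite, then beyond the final bad index every step is good and $s_n$ decays geometrically to $0$, so we may assume it is infinite. Set $d_k = n_{k+1} - n_k$, so that $d_k \to \infty$ by hypothesis, and set $t_k = s_{n_k}$. Between $n_k$ and $n_{k+1}$ there is one bad step followed by $d_k - 1$ good steps, and hence
\[
t_{k+1} \le q^{d_k - 1}(2q\, t_k + b) \;=\; 2 q^{d_k} t_k \,+\, b\, q^{d_k - 1}.
\]
Both coefficients on the right tend to $0$ because $d_k \to \infty$. A standard linear-iteration argument then forces $t_k \to 0$: given $\eps > 0$, choose $K$ so large that $2 q^{d_k} \le \tfrac{1}{2}$ and $b\, q^{d_k - 1} \le \tfrac{\eps}{2}$ for all $k \ge K$, and iterate the recursion $t_{k+1} \le \tfrac{1}{2}\, t_k + \tfrac{\eps}{2}$ to conclude $t_k \le \eps$ eventually. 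Since $\eps > 0$ was arbitrary, $t_k \to 0$.

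To deduce $s_n \to 0$ at every index, write $n = n_k + j$ with $0 \le j < d_k$ and combine the single bad-step bound at $n_k$ with the $j-1$ subsequent good contractions to obtain $s_n \le q^{j-1}(2 q\, t_k + b)$ (with the convention $s_{n_k} = t_k$ when $j = 0$). As $n \to \infty$ the epoch label $k$ also tends to infinity, so $t_k$ is small, and the geometric factor $q^{j-1}$ suppresses the residual $b$-term whenever $j$ is large enough. The main obstacle I anticipate is the remaining fine bookkeeping for indices $n$ immediately following a bad index (where $j$ is small so $q^{j-1}$ is close to $1$): here one must re-use the fact that the additive error $b$ injected at $n_k$ has \emph{already} been premultiplied by $q^{d_{k-1}-1}$ in producing $t_k$, so that the effective contribution of each bad step across two successive epochs is controlled. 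This is precisely where the strong hypothesis ``distance between adjacent bad indices tends to infinity'' (as opposed to merely ``bad indices have density zero'') is used, and once the carry-over between consecutive epochs is tracked, $s_n \to 0$ follows uniformly in $j$ and $k$.
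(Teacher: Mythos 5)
Your epoch decomposition and the recursion $t_{k+1}\le q^{d_k-1}(2q\,t_k+b)=2q^{d_k}t_k+b\,q^{d_k-1}$, together with the fixed-point iteration showing $t_k=s_{n_k}\to 0$, are exactly the paper's argument (the paper uses contraction factor $1/4$ instead of $1/2$, which is immaterial), and that part is correct. The problem is your final paragraph. For $n=n_k+1$ your own bound reads $s_n\le 2q\,t_k+b$, and since $t_k\to 0$ this only yields $\limsup_k s_{n_k+1}\le b$, not convergence to $0$. The proposed rescue --- that the additive error ``injected at $n_k$ has already been premultiplied by $q^{d_{k-1}-1}$'' --- mixes up two different bad steps: the damped quantity $b\,q^{d_{k-1}-1}$ sits inside $t_k$ and accounts for the bad step at $n_{k-1}$, whereas the bad step at $n_k$ contributes a fresh, completely undamped $+b$ to $s_{n_k+1}$. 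No bookkeeping across consecutive epochs can remove it: the hypotheses permit, for instance, taking $s_{n_k+1}=b/2$ at every bad step (this is consistent with $s_{n_k+1}\le 2qs_{n_k}+b$ since $s_{n_k}>0$) and exact good steps elsewhere, with gaps $d_k\to\infty$; then $t_k\to 0$ but $s_{n_k+1}=b/2$ for all $k$. So ``$s_n\to 0$ uniformly in $j$ and $k$'' cannot be deduced from the stated hypotheses when there are infinitely many bad indices; what does follow (and what your correct portion establishes) is that $s_n\to 0$ along the bad indices and, more generally, along any subsequence whose distance from the preceding bad index tends to infinity.

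For comparison, the paper's proof treats exactly this bad-index subsequence and simply asserts at the outset that this suffices, offering no argument for indices immediately following a bad step. So up to the point where you try to upgrade the convergence to all indices you are reproducing the paper's proof; the upgrade itself is the genuine gap in your proposal, and the example above shows it is not a gap that more careful carry-over tracking can close --- one must either settle for the bad-index (or ``far from the last bad index'') conclusion, which is what is actually exploited later in the paper, or impose an additional hypothesis controlling $s_{n+1}$ at bad steps.
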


\begin{proof}
%Let $n_1<n_2<\dots$ be all bad indices.
It suffices to show that $s_n\to 0$ as $n$ runs through all bad indices $n_1<n_2<\dots$;
fix $\eps>0$ and $N$ such that $q^N<1/8$ and $q^{N-1}b<\eps$.
Then, for $i$ large, we have $n_{i+1}-n_i\ge N$, and
$$
s_{n_{i+1}}=q^{n_{i+1}-n_i-1}(2qs_{n_i}+b)=q^{n_{i+1}-n_i}(2 s_{n_i}+q^{-1}b)\le \frac{s_{n_i}}{4}+\eps.
$$
Since the map $h(x)=x/4+\eps$ has a unique attracting point $4\eps/3$ which attracts all points of $\R$, then
%This means that $s_{n_i}\ge 4\eps$ implies $s_{n_{i+1}}\le s_{n_i}/2$. Thus
$s_{n_i}$ becomes eventually less than $4\eps$.
Since $\eps>0$ is arbitrary, it follows that $s_n\to 0$ as $i\to\infty$, as desired.
\end{proof}

Recall that we consider immediately renormalizable polynomials $f$ and use for them
the notation introduced above.
Consider two rays $R=R(\al)$ and $L=R(\be)$ landing in $K^*$.
Also, take any equipotential $\wh E$ of $P$.
Let $\Delta=\Delta(R,L,\wh E)$ be the bounded complementary component of $K^*\cup R\cup L\cup \wh E$
such that the external rays that penetrate into $\Delta$ have arguments that belong to the positively oriented arc from $\al$
to $\be$.
%Whenever the dependence on $\Delta$ needs to be emphasized, write $m_i(\Delta)$ instead of $m_i$.
Evidently, $\Delta$ is the intersection of $S^\circ(\al,\be)$ with the Jordan disk enclosed by $\wh E$.
Hence results of the previous section dealing with sectors apply to $\Delta$ and similar sets.
Let $\Delta'$ be a pullback of $\Delta$ such that $\ol{\Delta'}\cap K^*\ne\0$; then say that $\Delta'$ is a pullback
of $\Delta$ \emph{adjacent} to $K^*$. If $\Delta'$ is a $P^n$-pullback of $\Delta$ such that $\ol{\Delta'}\cap K^*\ne\0$,
we call $\Delta'$ an \emph{iterated} pullback of $\Delta$ \emph{adjacent} to $K^*$.
Let $\Delta=\Delta_0$ and for every $n$ the set $\Delta_n$ be a pullback of $\Delta_{n-1}$ adjacent to $K^*$.
Then the sequence of sets $\Delta_n, n=0, 1, \dots$ is called a \emph{backward pullback orbit of $\Delta$ adjacent to $K^*$}.
Set $\wt U^*$ to be the pullback of $U^*$ containing $\wt K^*$.

\begin{lem}
\label{l:Del}
Suppose that $\{\Delta_n\}$ is a backward pullback orbit of $\Delta$ adjacent to $K^*$ and
$n_1<n_2<\dots$ are \emph{all} positive integers $n$ such that $\om_2\in\Delta_n$
and $\om_2$ is non-recurrent.
If $n_{i+1}-n_i\to\infty$, then $\Delta_{n}\subset U^*$ for large $n$ so that the distance between $\Delta_n$ and $K^*$ tends to $0$.
\end{lem}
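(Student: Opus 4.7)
The plan is to estimate the spherical diameter $s_n = \mathrm{diam}(\Delta_n)$ and verify the hypotheses of Lemma \ref{l:sn}, obtaining $s_n \to 0$; the geometric conclusion will then follow from the adjacency condition $\ol{\Delta_n} \cap K^* \ne \0$.

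First I would observe that the non-recurrence of $\om_2$, combined with $\om_1 \in K^*$ (so $\omega(\om_1) \subset K^*$ because $K^*$ is forward invariant), implies that no point of $\ol{\Delta}\sm K^*$ lies in the limit set of any recurrent critical point. Thus Theorem \ref{t:mane} furnishes, around each such point $z$, a Ma\~n\'e disk $W_z$ together with constants $C_z>0$ and $q_z\in(0,1)$ bounding the spherical diameter of every component of $f^{-n}(W_z)$ by $C_z q_z^n$.

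Next I would cover the compact set $\ol{\Delta}\sm U^*$ by finitely many such disks $W_1,\dots,W_k$, obtaining uniform constants $q\in(0,1)$ and $C>0$. The remaining part of $\ol{\Delta}$ lies in $U^*$, where $f:U^*\sm K^*\to V^*\sm K^*$ is conjugated by $\psi^*$ to $z^2$ on an annular neighborhood of $\uc$ in $\C\sm\cdisk$; pullbacks under $z^2$ contract toward $\uc$, so pullbacks near $K^*$ contract toward $K^*$. Together these two mechanisms supply the contraction factor $q$ and the additive constant $b$ of Lemma \ref{l:sn}.

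With this setup I would analyse the recursion one step at a time. At a good index (passing from $\Delta_n$ to $\Delta_{n+1}$ with $\om_2\notin\Delta_{n+1}$), the map $f:\Delta_{n+1}\to\Delta_n$ is univalent on the relevant pullback component; combining the Ma\~n\'e bound with a Koebe-type distortion estimate yields $s_{n+1}\le q s_n$. At a bad index ($\om_2\in\Delta_{n+1}$), the same argument applied with one pullback component now branched at $\om_2$ yields $s_{n+1}\le 2 q s_n + b$, the factor $2$ reflecting the $2$-to-$1$ local degree and $b$ absorbing the bounded contribution of the fixed cover together with the annular piece adjacent to $K^*$. Since by hypothesis $n_{i+1}-n_i\to\infty$, Lemma \ref{l:sn} then gives $s_n\to 0$. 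Because $\ol{\Delta_n}\cap K^*\ne\0$ for every $n$, a set $\Delta_n$ of small diameter necessarily lies in an arbitrarily small neighborhood of $K^*$; hence $\Delta_n\subset U^*$ for all large $n$, and $\mathrm{dist}(\Delta_n,K^*)\to 0$.

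The hard part will be organising the per-step estimate at a bad index into a clean affine bound with uniform constants $q$ and $b$: one must stitch together the branched pullback near $\om_2$, the univalent pullbacks of the Ma\~n\'e cover, and the annular pullbacks near $K^*$, and ensure that $\Delta_n$ is actually controlled by a bounded chain of such pullback components rather than being forced to cross uncontrollably many Ma\~n\'e disks. Handling this bookkeeping so that the factor $2$ and error $b$ appear in the exact form demanded by Lemma \ref{l:sn} is the crux of the argument.
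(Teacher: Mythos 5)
There is a genuine gap, and it lies exactly where you park it as ``the hard part'': the quantity you propose to control and the per-step estimates for it are not available. You take $s_n=\mathrm{diam}(\Delta_n)$, but near $K^*$ there is no uniform backward contraction at all: $K^*$ contains the non-repelling fixed point and the critical point $\om_1$ (whose limit set lies in $K^*$ and which may well be recurrent), so Ma\~n\'e disks cannot be chosen with uniform constants up to $K^*$, and the substitute you offer --- transferring to the $z^2$-model by $\psi^*$ --- only contracts in the uniformizing coordinate; since $K^*$ need not be locally connected and impressions of its prime ends need not be degenerate, this says nothing about spherical diameters in the dynamical plane. In fact $\mathrm{diam}(\Delta_n)\to 0$ is stronger than the lemma asserts and should not be expected: the pullbacks hug $K^*$, and the paper only ever bounds $\mathrm{diam}(\Delta_n\sm U^*)$, concluding $\Delta_n\subset U^*$ by re-running the argument with a smaller neighborhood in place of $U^*$. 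Relatedly, your one-step inequalities $s_{n+1}\le q\,s_n$ and $s_{n+1}\le 2q\,s_n+b$ do not follow from Theorem \ref{t:mane} plus Koebe: Ma\~n\'e's theorem bounds diameters of $k$-fold iterated pullbacks of a \emph{fixed} finite family of disks by $Cq^k$; it gives no uniform contraction factor for a single pullback of an arbitrary set, and no such factor exists here.

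The paper's proof is designed precisely to bridge this: it covers $\Delta\sm U^*$ by finitely many Ma\~n\'e disks, covers the fixed set $\wt U^*$ (the pullback of $U^*$ around $\wt K^*$) by another finite family $\Vc$, and then propagates a covering $\Uc_n$ of $\Delta_n\sm U^*$ by pullbacks, attaching to each element $U$ the weight $s(U)=Cq^k$ where $k$ is the number of iterates since that element was last ``reset'' (elements of $\Vc$ are injected exactly at the moments $\om_2\in\Delta_n$, which is where new pieces of $\Delta_n$ outside $U^*$ appear near $\wt K^*$). The bookkeeping quantity is $s_n=\sum_{U\in\Uc_n}s(U)$, which dominates $\mathrm{diam}(\Delta_n\sm U^*)$ by the triangle inequality and \emph{does} satisfy the recursion of Lemma \ref{l:sn} with $b=\mathrm{diam}(\wt U^*)$ (the factor $2$ coming from the doubling of the number of type $(1)$ elements at a critical passage, not from a local degree estimate on diameters). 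Your additive constant attributed to ``the annular piece adjacent to $K^*$'' reflects the same misidentification: the part of $\Delta_n$ inside $U^*$ is not estimated at all; only its pullback popping out near $\wt K^*$ is, via $\Vc$. Without replacing your direct diameter recursion by some such weighted-cover (or an equivalent) device, and without restricting attention to $\Delta_n\sm U^*$, the argument does not go through.
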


\begin{proof}
  Consider a finite covering $\Uc$ of $\Delta\sm U^*$ by Ma\~ne neighborhoods.
After adjustments we may assume that elements of $\Uc$ are subsets of $\Delta$. % (in order to arrange this, intersect them with $\Delta$).
Similarly, fix a finite covering $\Vc$ of $\wt U^*$
%$P^{-1}(U^*)\sm U^*$
by Ma\~ne neighborhoods such that $\bigcup\Vc=\wt U^*$.
Set $\Uc_1=\Uc$ and define $\Uc_n$ inductively as follows.
Assuming by induction that $\Delta_n\sm U^*\subset\bigcup\Uc_n\subset \Delta_n\cup U^*$,
 define $\Uc_{n+1}$ as the set of all open sets $U$ satisfying one of the following:
\begin{enumerate}
  \item there is $U'\in\Uc_n$ such that $U\subset\Delta_{n+1}$ is a pullback of $U'$;
  %$P:U\to U'$ is proper and $U\subset\Delta_{n+1}$;
  \item the point $\om_2$ is in $\Delta_{n+1}$, and $U\in\Vc$.
\end{enumerate}
Neighborhoods in $\Uc_{n+1}$ as in item $(1)$ (resp., $(2)$) are called \emph{type $(1)$} (resp., \emph{type $(2)$}) neighborhoods.

Any $U\in\Uc_n$ is either obtained as a $P^k$-pullback of some type $(1)$ neighborhood in $\Uc_{n-k}$
with $k$ being maximal with this property, or comes from $\Vc$ but only at the moments when $\om_2\in \Delta_n$.
In the former case set $s(U)=Cq^k$; by the Ma\~ne theorem, $\mathrm{diam}(U)\le s(U)$. In the latter case set $s(U)=\mathrm{diam}(U)$.
Define $s_n=s_n(\Delta)$ as the sum of $s(U)$ over all $U\in\Uc_n$.
% of type (1) (unless $\om_2\in \Delta_n$ these are all
%sets from $\Uc_n$).
By the triangle inequality $\mathrm{diam}(\Delta_n\sm U^*)$ is bounded above by $s_n$.
Thus it suffices to show that $s_n\to 0$ as $n\to\infty$. Now consider two cases of transition from $n$ to $n+1$.

(1) Assume that $\om_2\notin \Delta_{n+1}$.
Then $\Delta_{n+1}$ maps one-to-one to $\Delta_n$; moreover, no point of $\Delta_{n+1}\sm U^*$ maps into $U^*$.
It follows that all neighborhoods in $\Uc_{n+1}$ are type $(1)$.
In this case we have $s_{n+1}=q s_n$.

(2) Assume that $\om_2\in\Delta_{n+1}$.
Then there are at most twice as many type $(1)$ neighborhoods in $\Uc_{n+1}$ as neighborhoods in $\Uc_n$.
Also, $\Uc_{n+1}$ includes $\Vc$.
Thus we have $s_{n+1}\le 2qs_n+
\mathrm{diam}(\wt U^*)$.  %C|\Vc|$, where $|\Vc|$ is the number of elements in $\Vc$.

Thus, $s_n$ satisfies Lemma \ref{l:sn} with %$b=C|\Vc|$
$b=\mathrm{diam}(\wt U^*)$; in particular, $s_n\to 0$ as $n\to\infty$, and so
the diameter of $\Delta_n\sm U^*$ tends to $0$.
Replacing $U^*$ with a smaller neighborhood of $K^*$ and repeating the same argument
yields $\Delta_n\subset U^*$ for large $n$. Evidently, this completes the proof.
%Finally, note that once $\Delta_{n_i}\subset U^*$, we have $\Delta_n\subset U^*$ for all $n\ge n_i$.
\end{proof}

If, in the setting of Lemma \ref{l:Del}, the sequence $n_{i+1}-n_i$ does not tend to infinity, then
 it takes the same value infinitely many times.
We now consider this case.

\begin{lem}
  \label{l:bnd}
  Suppose that $n_{i+1}-n_i$ takes the same value $N$ infinitely many times.
Then the quadratic argument of $\om_2$ is $\si_2^N$-fixed and the associated pullbacks
$[\al_n,\be_n]$ of $[\al, \be]$ are shrinking segments around corresponding points of the orbit of
the quadratic argument of $\om_2$.
\end{lem}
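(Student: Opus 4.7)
The plan is to exploit the infinite set $\mathcal S=\{i:n_{i+1}-n_i=N\}$. For $i\in\mathcal S$, since $\om_2\in\Delta_{n_i+N}=\Delta_{n_{i+1}}$ and $f^N$ sends $\Delta_{n_{i+1}}$ onto $\Delta_{n_i}$, we have $f^N(\om_2)\in\Delta_{n_i}$, so $\Delta_{n_i}$ contains both $\om_2$ and $f^N(\om_2)$. Hence the minimal sectors $S(\om_2)$ and $S(f^N(\om_2))$ are both contained in $\ol{\Delta_{n_i}}$, and consequently their two associated quadratic arguments lie in the arc of $\uc$ cut out by $\psi^*$ from the $K^*$-boundary of $\Delta_{n_i}$.

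Next I would transport everything by $\psi^*$ to the quadratic plane, where $K^*$ becomes $\uc$ and $f^*$ acts as $z\mapsto z^2$. The rays $R(\al_n),R(\be_n)$ land at points of $K^*$ whose quadratic arguments are $\tau(\al_n),\tau(\be_n)$, and these bound an arc $I_n\subset\uc$ corresponding to $K^*\cap\ol{\Delta_n}$. Because $f$ moves these landing points as $f^*$, which is $\psi^*$-conjugate to $\si_2$, one has $\si_2(\tau(\al_n))=\tau(\al_{n-1})$ and similarly for $\be_n$; hence $\si_2$ maps $I_n$ bijectively onto $I_{n-1}$ once $|I_n|<\tfrac12$, so $|I_n|=|I_{n-1}|/2$ and $|I_{n_i}|\to 0$. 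Writing $\theta=\al(D_c)$ for the quadratic argument of $D_c=D(\om_2)$, iterated application of Lemma~\ref{l:z2} along the orbit of $\om_2$ identifies $\al(D(f^N(\om_2)))$ with $\si_2^N(\theta)$; since both $\theta$ and $\si_2^N(\theta)$ lie in the shrinking arc $I_{n_i}$ for every $i\in\mathcal S$, we conclude $\si_2^N(\theta)=\theta$.

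For the shrinking assertion I would apply the same argument with a shift: for $0\le k\le N$, $f^k(\om_2)\in\Delta_{n_{i+1}-k}$, so $I_{n_{i+1}-k}$ contains $\si_2^k(\theta)$ for every $i\in\mathcal S$ and therefore collapses onto this single point; pulling back through $\tau$, the $K$-argument segments $[\al_n,\be_n]$ shrink around the points of $\bd(\Uf)$ corresponding to the $\si_2$-orbit of $\theta$. The main obstacle will be the edge case in which some intermediate decoration $D(f^k(\om_2))$ with $1\le k<N$ coincides with $D_c$: then $f$ sends it to all of $K(f)$ rather than to a decoration and the direct iteration of Lemma~\ref{l:z2} breaks. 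I would handle this by invoking the special identity $\al(D_v)=\si_2(\al(D_c))$ at the first such $k$ to deduce $\si_2^k(\theta)=\theta$ directly, and combining with the arc-collapse to still obtain $\si_2^N(\theta)=\theta$. A minor secondary technicality is to verify that the forward orbit of $\om_2$ does not enter $K^*$, so that the relevant quadratic arguments remain defined; this is where the non-recurrence of $\om_2$ is used.
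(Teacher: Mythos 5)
Your overall mechanism --- trapping two quadratic arguments inside the shrinking arcs $I_{n}\subset\uc$ cut out by the rays bounding $\Delta_{n}$ near $K^*$, whose lengths halve at each pullback --- is the same as in the paper. The gap is in how you certify that $\si_2^N(\theta)$, where $\theta=\al(D_c)$, is one of the two trapped points. You push the critical point forward and claim $\al(D(f^N(\om_2)))=\si_2^N(\theta)$ by iterating Lemma \ref{l:z2}; that chain is valid only if $D(f^k(\om_2))\ne D_c$ for $1\le k<N$, and your fallback for the exceptional case does not close. At the first $k<N$ with $D(f^k(\om_2))=D_c$ you indeed get $\si_2^k(\theta)=\theta$, but this only says that the period of $\theta$ divides $k$, not $N$; and the chain cannot be restarted past $k$, since $f(D_c)$ is all of $K(f)$ (Theorem \ref{t:decor}), so the decoration containing $f^{k+1}(\om_2)$ --- the image of a point of $D_c$ other than $\om_2$ --- is not controlled by $\al(D_c)$. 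The ``arc-collapse'' by itself only yields $\al(D(f^N(\om_2)))=\theta$, which says nothing about $\si_2^N(\theta)$ once the identification with $\si_2^N(\theta)$ is lost. Two smaller inaccuracies: the minimal sectors are not contained in $\ol{\Delta_{n_i}}$ (which is truncated by an equipotential) but in the closed sector bounded by the two rays bounding $\Delta_{n_i}$ near $K^*$ --- the conclusion about arguments survives; and non-recurrence is not what keeps the orbit of $\om_2$ out of $K^*$ (a critical point whose orbit falls into the invariant set $K^*$ is automatically non-recurrent because $\om_2\notin K^*$); what keeps it out is that $f^k(\om_2)\in\Delta_{n_i-k}$ for $n_i>k$ and all pullbacks of $\Delta$ are disjoint from $K^*$.

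The paper's proof avoids the forward orbit of $\om_2$ entirely, and you already have every ingredient for that route: since $\om_2\in\Delta_{n_{i+1}}=\Delta_{n_i+N}$, the argument $\theta$ lies in $I_{n_i+N}$; since $\si_2$ maps each $I_{m+1}$ onto $I_m$ (your own observation that $\si_2(\tau(\al_{m+1}))=\tau(\al_m)$ and $\si_2(\tau(\be_{m+1}))=\tau(\be_m)$, with lengths halving), we get $\si_2^N(\theta)\in I_{n_i}$, and $I_{n_i}$ also contains $\theta$ because $\om_2\in\Delta_{n_i}$. Letting $i\to\infty$ gives $\si_2^N(\theta)=\theta$ with no case analysis, and the shrinking of the intervals $[\al_n,\be_n]$ around the orbit of $\theta$ follows from the same halving. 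Replace the Lemma \ref{l:z2} chain by this interval argument and your proof is complete.
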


\begin{proof}
Let $R_n$ and $L_n$ be the rays with quadratic arguments $\al_n$ and $\be_n$ landing in $K^*$ and bounding $\Delta_n$ near $K^*$.
%Let $\al_n$ and $\be_n$ be the  of $R_n$ and $L_n$.
Since $\Delta_{n+1}$ is a $P$-pullback of $\Delta_n$, then $\al_n=2\al_{n+1}\pmod 1$ and $\be_n=2\be_{n+1}\pmod 1$,
and the interval $[\al_{n+1},\be_{n+1}]$ is twice shorter than $[\al_n,\be_n]$.
By the assumption, $\om_2\in\Delta_n\cap\Delta_{n+N}$ for arbitrarily large $n$.
Hence the intervals $[\al_n,\be_n]$ contain the quadratic argument of $\om_2$.
Passing to the limit, we see that this quadratic argument is $\si_2^N$-fixed. %Moreover, since
%the segments $[\al_n, \be_n]$ at the appropriate times contain $\om_2$, and because $\si_2$ is expanding,
The last claim now follows.
\end{proof}

Standard arguments now yield the next lemma.

\begin{lem}\label{l:all-n}
If %$f$ is immediately renormalizable, %as in Theorem \ref{t:recur}, and
$\om_2$ is non-recurrent and a decoration $D$ has a quadratic argument $\al(D)$
which does not belong to the orbit of a periodic quadratic argument $\al(D_c)$,
then there exists $N$ such that any $P^N$-pullback of $D$ adjacent to $K^*$ is contained
in $U^*$.
\end{lem}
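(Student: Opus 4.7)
The plan is to enclose the decoration $D$ in a cut $\Delta = \Delta(R(\al),R(\be),\wh E)$ of the type studied just above and to apply Lemma \ref{l:Del} to every backward pullback orbit of $\Delta$ adjacent to $K^*$. Since $\al(D_c)$ is periodic under $\si_2$, its orbit $\mathcal{O}\subset\uc$ is a finite set, and by hypothesis $\al(D)\notin\mathcal{O}$. First I pick two $\si_3$-preperiodic angles $\al,\be\in\widehat X$ (so that by Lemma \ref{l:who-land} the $K$-rays $R(\al),R(\be)$ land in $K^*$) whose $\tau$-images bracket $\al(D)$ and bound, together with $\al(D)$, an arc $I\subset\uc$ of quadratic arguments disjoint from $\mathcal{O}$; together with a sufficiently distant equipotential $\wh E$ this defines the cut $\Delta$. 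Because $\psi^*(D)$ accumulates only at $e^{2\pi i\al(D)}\in I$, we have $D\subset\Delta$. Moreover, any $P^n$-pullback $D'$ of $D$ adjacent to $K^*$ is a connected subset of $P^{-n}(\Delta)$ and hence lies in a single $P^n$-pullback of $\Delta$, which is automatically adjacent to $K^*$. It therefore suffices to show that $P^N$-pullbacks of $\Delta$ adjacent to $K^*$ are all contained in $U^*$ for some uniform $N$.

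Next, fix a backward pullback orbit $\{\Delta_n\}$ of $\Delta$ adjacent to $K^*$, and let $n_1<n_2<\cdots$ enumerate those indices for which $\om_2\in\Delta_{n_i}$. I claim $n_{i+1}-n_i\to\infty$, after which Lemma \ref{l:Del} gives $\Delta_n\subset U^*$ for all sufficiently large $n$. If instead some value $N$ is attained by $n_{i+1}-n_i$ infinitely often, then by Lemma \ref{l:bnd} the angle $\al(D_c)$ is $\si_2^N$-fixed and the intervals $[\al_n,\be_n]$ shrink around points of $\mathcal{O}$; in particular $\al(D_c)\in[\al_{n_i},\be_{n_i}]$ for all large $i$. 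Since $\si_2^{n_i}$ maps $[\al_{n_i},\be_{n_i}]$ bijectively onto $[\al_0,\be_0]=I$, it sends $\al(D_c)$ to a point $\si_2^{n_i}(\al(D_c))\in I\cap\mathcal{O}$, contradicting our choice of $I$.

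Finally, to promote this pointwise statement to a uniform $N$, I apply K\"onig's lemma to the locally finite tree of backward pullbacks of $\Delta$ adjacent to $K^*$: if no uniform $N$ existed, the subtree of pullbacks not contained in $U^*$ would be infinite, hence contain an infinite path, i.e., a backward pullback orbit every one of whose terms avoids $U^*$, contradicting the pointwise conclusion above. The main obstacle I anticipate is the careful setup of $\Delta$ --- verifying that $D$ really sits on the correct side of the two boundary rays (this uses the uniqueness of the quadratic argument from Proposition \ref{p:pends}) and that $P^n$-pullbacks of $D$ adjacent to $K^*$ correspond correctly to such pullbacks of $\Delta$; once this is in place, the dynamical core of the argument is a direct pigeonhole on the finite orbit $\mathcal{O}$ combined with Lemmas \ref{l:Del} and \ref{l:bnd}.
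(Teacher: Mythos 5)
Your proof is correct and follows essentially the same route as the paper: Lemma \ref{l:bnd} rules out bounded return times of $\om_2$ along any backward pullback orbit adjacent to $K^*$ (your pigeonhole on the finite orbit of a periodic $\al(D_c)$ makes explicit what the paper leaves implicit), Lemma \ref{l:Del} then forces each such orbit into $U^*$, and the uniform $N$ is extracted by the same K\"onig-type selection among the finitely many pullbacks at each level that the paper carries out by hand. Your explicit enclosure of $D$ in a cut $\Delta(R(\al),R(\be),\wh E)$ with quadratic argument arc avoiding the orbit of $\al(D_c)$ is a helpful clarification, since Lemmas \ref{l:Del} and \ref{l:bnd} are stated for such regions rather than for decorations.
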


\begin{proof}
If $D_n$ is a $P^n$-pullback of $D$ adjacent to $K^*$ with $D_n\not\subset U^*$, then
$f^i(D_n)\not\subset U^*$ for every $i, 0\le i\le n$. By Lemma \ref{l:bnd} any backward pullback
orbit of $D$ adjacent to $K^*$ satisfies conditions of Lemma \ref{l:Del}; thus, for any such orbit $D=D_0,$
$D_1, $ $\dots$ there exists the minimal $n$ such that $D_n\subset U^*$. By way of contradiction suppose that for any $N$ there exists
a $P^N$-pullback of $D$ adjacent to $K^*$ and not contained in $U^*$. Then there are infinitely many such pullbacks. Now
take the $P$-pullbacks of $D$ adjacent to $K^*$ (there are finitely many of them). Choose among them one pullback that has
$P^N$-pullbacks adjacent to $K^*$ and not contained in $U^*$ for any $N$, and then apply the same construction to it. Evidently, in the end
we will construct a backward pullback orbit of $D$ adjacent to $K^*$, a contradiction with Lemma \ref{l:Del}.
\end{proof}

All this implies %the next
Proposition \ref{p:alldec}.

%From no on, speaking of two neighborhoods $U^*$, $V^*$ of $K^*$ we always assume that $f:U^*\to V^*$ is a quadratic-like map.
%In particular, $K^*\subset U^*\Subset V^*$ and $f(U^*)=V^*$.

\begin{comment}

\begin{cor} \label{c:pbs-shrink}
Let $f$ be a cubic immediately renormalizable polynomial. % with empty rational lamination.
Suppose that $\om_2$ is non-recurrent and, moreover, that the quadratic argument $\al(D_c)$
of the critical decoration $D_c$ is non-periodic. Then for every neighborhood $U$ of $K^*$,
there exists a positive integer $N$ such that for every $n>N$, every decoration $D$ such that
$f^n(D)=D_c$ is a subset of $U$.
\end{cor}

By Theorem \ref{t:decor}, $n$ is the first moment any point of $D$ maps to $D_c$.

\end{comment}

%As a corollary of Proposition \ref{p:pbs-shrink}, we obtain the following proposition:

\begin{prop}
  \label{p:alldec}
If $f$ is immediately renormalizable, %as in Theorem \ref{t:recur},
and $\om_2$ is non-recurrent then
every decoration is eventually mapped to $D_c$.
\end{prop}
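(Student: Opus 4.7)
The plan is to argue by contradiction. Suppose $D$ is a decoration with $f^n(D) \neq D_c$ for every $n \geq 0$. Then, by Theorem \ref{t:decor}, every forward iterate $D_n := f^n(D)$ is a non-critical decoration mapping homeomorphically onto $D_{n+1}$, and by Lemma \ref{l:z2}, $\alpha(D_n) = \sigma_2^n(\alpha(D))$.

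First I would note that, for each $n \geq 0$, the decoration $D$ is itself a $P^n$-pullback of $D_n$ that is adjacent to $K^*$, since every decoration accumulates to $K^*$ by Lemma \ref{l:triv}. The key step is then to apply Lemma \ref{l:all-n} to $D_n$. If $\alpha(D_c)$ is non-periodic (which is the generic situation under the non-recurrence of $\omega_2$), the hypothesis is automatic and the lemma yields an integer $N(D_n)$ such that every $P^{N(D_n)}$-pullback of $D_n$ adjacent to $K^*$ is contained in $U^*$. If $\alpha(D_c)$ is periodic with orbit $\mathcal O$, one checks separately the case where the $\sigma_2$-orbit of $\alpha(D)$ enters $\mathcal O$, using Lemma \ref{l:dec-sec} to compare $D_n$ with the corresponding image of $D_c$ inside the common minimal sector they would share.

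In either case, I would aim to produce a uniform bound $N_0$, independent of $n$, so that for $n \geq N_0$ the pullback $D$ of $D_n$ must lie in $U^*$. Uniformity would be extracted by examining the proofs of Lemmas \ref{l:Del}--\ref{l:all-n}: the controlling data (a finite Mañé cover of $\Delta\sm U^*$, the diameter of $\wt U^*$, and the constants $q$, $b$ in Lemma \ref{l:sn}) can be chosen once and for all, independently of the particular decoration $D_n$ being pulled back. Forcing $D\subset U^*$ yields the desired contradiction, for the following reason: since $D\cap K^*=\emptyset$ and $K^*=K(f^*)$ is exactly the set of points of $U^*$ whose full forward $f^*$-orbit stays in $U^*$, the forward orbit of any point of $D$ must eventually leave $U^*$; combining this with the uniform pullback shrinking applied to $f^{n+k}(D)$ for appropriate $k,n$ produces the contradiction.

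The main obstacle I anticipate is precisely the uniformity across the family $\{D_n\}$ and the clean handling of the case when $\alpha(D_c)$ is periodic and the orbit of $\alpha(D)$ eventually enters that cycle. This periodic case is where Lemma \ref{l:bnd} bites, and one must use the combinatorics of the semiconjugacy $\tau:\bd(\Uf)\to\uc$ established in Section \ref{s:cubarg} together with Lemma \ref{l:dec-sec} to distinguish $D_n$ from the corresponding iterates of $D_c$ even though they share a quadratic argument, so that the shrinking conclusion of Lemma \ref{l:all-n} still holds for $D_n$.
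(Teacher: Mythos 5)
Your overall strategy is genuinely different from the paper's, and as written it has two concrete gaps. First, the uniformity you lean on is not available from the stated lemmas: Lemma \ref{l:all-n} produces an $N$ depending on the decoration being pulled back, while your contradiction requires a single $N_0$ working simultaneously for the infinite family $\{D_n=f^n(D)\}$, which in general consists of infinitely many distinct decorations. You assert this can be "extracted by examining the proofs," but that is precisely the missing work: one would have to rerun the Ma\~n\'e covering argument of Lemma \ref{l:Del} with a cover chosen globally (say of $K\sm U^*$) rather than of a fixed $\Delta\sm U^*$, and verify the inductive step uniformly; the helpful observation that $\om_2$ lies in none of the $D_n$ (so only the contracting case of Lemma \ref{l:sn} occurs along these branches) is not even stated in your sketch. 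Second, and more seriously, the periodic case is not handled: if $\al(D_c)$ is $\si_2$-periodic and $\si_2^n(\al(D))$ ever enters that cycle, then Lemma \ref{l:all-n} is inapplicable to \emph{all} subsequent $D_n$, which is exactly the scenario one must rule out for a decoration that never reaches $D_c$. Your proposed fix, "comparing $D_n$ with the corresponding image of $D_c$" via Lemma \ref{l:dec-sec}, implicitly needs to distinguish (or identify) decorations sharing a quadratic argument; but uniqueness of a decoration with a given argument is Corollary \ref{c:uni-dec}, which the paper deduces \emph{from} Proposition \ref{p:alldec}, so this route risks circularity. (A minor point: $D\subset U^*$ alone is no contradiction, since high-order pullbacks adjacent to $K^*$ are supposed to lie in $U^*$; your escaping-orbit patch, applied to some $D_k\not\subset U^*$, is the right repair, but it again presupposes the uniform $N_0$.)

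The paper avoids both difficulties by a different mechanism. It applies the shrinking lemmas only to pullbacks of the single fixed decoration $D_c$ (so no uniformity over a varying family is needed), forms the set $K_d$ consisting of $K^*$ together with all decorations that eventually map to $D_c$, and shows $K_d$ is compact (closedness is exactly where the shrinking enters) and backward invariant. It then invokes the minimality of $J(f)$ among compact backward-invariant sets (density of backward orbits) to get $J(f)\subset K_d$, and a short Fatou-component argument to conclude $K=K_d$, i.e., every decoration eventually maps to $D_c$. If you want to salvage your direct approach, you would need to prove a uniform version of Lemma \ref{l:all-n} for critical-point-free backward branches and give an independent treatment of the periodic-argument case; otherwise the paper's $K_d$/minimality argument is the cleaner route.
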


\begin{proof}
Suppose that the quadratic argument $\al(D)=\ga$ of $D$ does not belong to
a periodic orbit of $\al(D_c)$. Then by Lemma \ref{l:all-n} for any $U^*$ the $P^n$-pullbacks
of $D$ will be contained in $U^*$ for any $n>N(U^*)$ where $N(U^*)$ depends on $U^*$.
%either $\ga$ is non-periodic, or it is
%periodic but does not belong to the orbit of $\al(D_c)$. Then Lemmas \ref{l:Del} and \ref{l:bnd}
%imply that pullbacks of $D$ form a null-sequence.
Consider now the union $K_d$ of $K^*$ and all decorations that are eventually mapped to $D_c$.
We claim that the set $K_d$ is backward invariant.
Indeed, take any decoration $D\subset K_d$.
Then any $f$-pullback of $D$ is either a decoration in $K_d$ or a subset of $D_c$.
It follows that $f^{-1}(K_d)\subset K_d$, as desired.
Also, the set $K_d$ is compact as a union of $K^*$ and a %null-
sequence of sets
that are closed in $\C\sm K^*$ and accumulate to $K^*$. Observe that if $\al(D_c)$ is
periodic, the sets in $K_d$ are decorations with periodic arguments from the
$\si_2$-orbit of $\al(D_c)$, or decorations with non-periodic arguments-preimages
of $\al(D_c)$.

Now, $J(f)$ is a minimal by inclusion compact backward invariant subset of $\C$.
(Equivalently, the backward orbit of any point from $J(f)$ is dense in $J(f)$.)
Thus, $J(f)\subset K_d$ since $K_d$ contains points of $J(f)$.
On the other hand, consider a bounded Fatou component $\Omega$.
If $\Omega\not\subset K^*$ then
the boundary of $\Omega$ is a subset of $K_d$.
Therefore, it is a subset of some decoration $D\subset K_d$.
It follows that $\Omega$ itself is included into $D$.
We showed that $K=K_d$ which completes the proof of the proposition.
\end{proof}

The next corollary follows from Proposition \ref{p:alldec}.

\begin{cor}
  \label{c:uni-dec}
If $f$ is immediately renormalizable %as in Theorem \ref{t:recur},
and $\om_2$ is non-re\-cur\-rent then for each quadratic angle $\ga$ %either
there is
at most one decoration with quadratic argument $\ga$.
\end{cor}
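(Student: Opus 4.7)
The plan is to argue by forward iteration and reduce the problem to a single remaining case. Suppose for contradiction that two distinct decorations $D \ne D'$ share the same quadratic argument $\ga$. By Theorem \ref{t:decor}, any decoration has at most two pullback-decorations, and by Corollary \ref{c:2pbdec} these two carry distinct quadratic arguments. Consequently, as long as neither $D$ nor $D'$ equals $D_c$, we cannot have $f(D) = f(D')$ (otherwise $D, D'$ would be two distinct pullback-decorations of a common image with equal arguments), and by Lemma \ref{l:z2} the images $f(D), f(D')$ are again distinct decorations with common argument $\si_2(\ga)$. Applying Proposition \ref{p:alldec}, each orbit reaches $D_c$ in finite time; the same Corollary \ref{c:2pbdec} argument prevents the two orbits from hitting $D_c$ simultaneously. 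Hence at some step $n$ we have $f^n(D) = D_c$ while $E := f^n(D')$ is a decoration distinct from $D_c$ with $\al(E) = \al(D_c)$.

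Given such an $E$, Proposition \ref{p:alldec} produces a minimal $p \ge 1$ with $f^p(E) = D_c$, and Lemma \ref{l:z2} forces $\al(D_c)$ to be $\si_2$-periodic of period dividing $p$. The orbit $E, f(E), \dots, f^{p-1}(E)$ then forms a parallel orbit to the natural orbit $D_c, D_v, f(D_v), \dots$, producing two distinct decorations at every argument in the periodic $\si_2$-orbit of $\al(D_c)$.

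The main obstacle is deriving a contradiction in this periodic setting from the non-recurrence of $\om_2$. The idea is to set up a sector $\Delta$ adjacent to $K^*$ whose argument arc contains $\al(D_c)$, and to consider a backward pullback orbit $\Delta_n$ along which $\om_2$ returns to $\Delta_n$ with bounded spacing $p$. Lemma \ref{l:bnd} then forces the angular extents of $\Delta_n$ to shrink around the periodic $\si_2$-orbit of $\al(D_c)$. Combining this angular shrinking with the Ma\~n\'e neighborhood scheme from the proof of Lemma \ref{l:Del} should upgrade it to Euclidean shrinking, so that pairs $(\om_2, f^{kp}(\om_2))$ trapped in $\Delta_n$ are forced arbitrarily close, contradicting non-recurrence. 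The delicate step is obtaining Euclidean shrinking in the bounded-gap regime where Lemma \ref{l:Del} does not directly apply; this is the part of the proof I expect to require the most care.
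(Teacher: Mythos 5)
Your reduction (first two paragraphs) essentially retraces the paper's: use Lemma \ref{l:z2}, Theorem \ref{t:decor}, Corollary \ref{c:2pbdec} and Proposition \ref{p:alldec} to arrive at a decoration $E\ne D_c$ with $\al(E)=\al(D_c)$, $f^p(E)=D_c$, and $\al(D_c)$ $\si_2$-periodic. But the case you then leave open is not a marginal one: by Lemma \ref{l:perio-type} (which uses only Proposition \ref{p:alldec}), non-recurrence of $\om_2$ forces $\Uf$ to be of periodic type, so by Lemma \ref{l:cridi3} the argument $\al(D_c)=\tau(M_{\Uf})$ is \emph{always} periodic under the hypotheses of the corollary; the periodic case is the whole content. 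And the route you propose for it cannot be completed. The bounded-gap regime is exactly where the Ma\~n\'e bookkeeping of Lemma \ref{l:Del} degenerates: when $\om_2$ re-enters every $p$ steps, the estimate $s_{n+1}\le 2qs_n+b$ with $b=\mathrm{diam}(\wt U^*)$ recurs with period $p$, and the resulting recursion stabilizes at a size comparable to $b$ instead of tending to $0$, so no Euclidean shrinking can be extracted from the angular shrinking of Lemma \ref{l:bnd}. Worse, your intended contradiction would prove too much: the bounded-spacing returns of $\om_2$ to the pullback sectors use only the periodicity of $\al(D_c)$ (every $p$ steps the argument arc again contains $\al(D_c)$, hence the sector contains $D_c\ni\om_2$), not the existence of the extra decoration $E$. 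If Euclidean shrinking held, you would conclude $f^p(\om_2)=\om_2$, i.e.\ that no immediately renormalizable cubic with non-recurrent $\om_2$ and periodic $\al(D_c)$ exists at all --- contradicting Lemma \ref{l:perio-type} together with the existence of such maps (e.g.\ immediately renormalizable polynomials whose free critical point is strictly preperiodic). The paper closes the periodic case by a soft, purely local argument that uses the surplus decoration itself rather than recurrence: transferring $E$ (called $D'''$ there) and $D_c$ to the $z^2$-plane by $\psi^*$, both accumulate only at the circle point of argument $\al(D_c)$, which is fixed under the relevant iterate of $z^2$; the relation $f^n(D''')=D_c$ then violates the local injectivity of that iterate near the circle point. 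No metric estimates and no Ma\~n\'e-type input are needed.

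A smaller issue in your first paragraph: Corollary \ref{c:2pbdec} alone does not rule out two distinct decorations with a common argument having the same image (nor hitting $D_c$ simultaneously). For a common image $G$, the decorations containing pullbacks of $G$ are the two pullback-decorations $G_2,G_3$ \emph{and} $D_c$ (Theorem \ref{t:decor}); when $\si_2(\al(D_c))=\al(G)$, the configuration $\al(G_2)=\al(G_3)$ with $D_c$ carrying the other half-argument satisfies Corollary \ref{c:2pbdec} and is not excluded by it. The paper handles both merging and simultaneous arrival at $D_c$ with the sector machinery instead: by Lemma \ref{l:dec-sec} all decorations with a common quadratic argument $\ga\ne\al(D_c)$ lie in one sector avoiding $\om_2$, whose argument arc maps one-to-one under $\si_3$, and Lemma \ref{l:img-sec} makes $f$ injective there, so the two decorations have disjoint images. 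You will need that sector argument (or an equivalent) to make your inductive step airtight as well.
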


\begin{proof}
Assume that $\al(D_c)$ is non-periodic. If there are two distinct decorations
$D'$ and $D''$ with the same quadratic argument $\ga$, then there exists a unique number $n$ such that
$\si_2(\ga)=\al(D_c)$. This is the only chance for $D'$ and $D''$ to be mapped to $D_c$. It follows
from Proposition \ref{p:alldec} that $f(D')=f(D'')=D_c$. However by Lemma \ref{l:img-sec} this is impossible
(for small open sectors with argument arc containing $\al(D_c)$ the map $f$ restricted on them is one-to-one
by Lemma \ref{l:img-sec}).

Now suppose that $\al(D_c)$ is periodic of period $n$. Then, similar to the previous paragraph, we see that if
there are two distinct decorations $D'$ and $D''$ with the same quadratic argument $\ga$ then there must exist
a decoration $D'''\ne D_c$ with quadratic argument $\al(D_c)$ such that $f^n(D''')=D_c$. Transferring this to the $z^2$-plane
we see that the map $z^2$ is not one-to-one in a small neighborhood of the point of the unit circle with
argument $\al(D_c)$, a contradiction completing the proof.
\end{proof}

The next lemma specifies properties of the gap $\Uf$.

\begin{lem}\label{l:perio-type}
If $\om_2$ is non-recurrent, then $\Uf$ is of periodic type.
\end{lem}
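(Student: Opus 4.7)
The plan is to argue by contradiction. Suppose $\Uf$ is of regular critical type, so that its major $M_{\Uf}=\ol{ab}$ is a critical chord of $\sigma_3$, meaning $\sigma_3(a)=\sigma_3(b)$ with $a\ne b$. The goal is to contradict the non-recurrence of $\om_2$.

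First, I would establish that $\alpha(D_c)$ must be $\sigma_2$-periodic. Since $\om_2$ is non-recurrent, Proposition~\ref{p:alldec} applies, and every decoration eventually lands in $D_c$. Apply this to the critical value decoration $D_v$ to obtain the smallest $n\ge 0$ with $f^n(D_v)=D_c$. By Lemma~\ref{l:z2}, the action of $f$ on decorations other than $D_c$ corresponds under the quadratic argument to the action of $\sigma_2$, so $\sigma_2^n(\alpha(D_v))=\alpha(D_c)$. Combined with $\alpha(D_v)=\sigma_2(\alpha(D_c))$, this yields $\sigma_2^{p}(\alpha(D_c))=\alpha(D_c)$ with $p=n+1$, so $\alpha(D_c)$ is $\sigma_2$-periodic.

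Second, I would transfer periodicity to the endpoints of $M_{\Uf}$ via the semiconjugacy $\tau$. By Lemma~\ref{l:cridi3}, $\alpha(D_c)=\tau(M_{\Uf})=\tau(a)=\tau(b)$. The relation $\sigma_2\circ\tau=\tau\circ\sigma_3$ on $\bd\Uf$ gives $\tau(\sigma_3^p(a))=\sigma_2^p(\alpha(D_c))=\alpha(D_c)$, so $\sigma_3^p(a)\in\tau^{-1}(\alpha(D_c))\cap\bd\Uf=\{a,b\}$. Since $\sigma_3(a)=\sigma_3(b)$, similarly $\sigma_3^p(b)=\sigma_3^p(a)\in\{a,b\}$. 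In either case, at least one of $a$, $b$ is $\sigma_3$-periodic and the other is strictly preperiodic, mapping onto the periodic one.

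The final step is to extract a contradiction from the existence of a $\sigma_3$-periodic endpoint of a critical major. I would combine two ingredients. On the combinatorial side, the parametrization of invariant quadratic gaps by the Cantor set $Q$ from Subsection~\ref{sss:family} asserts that periodic-type majors arise precisely for tags at hole-endpoints of $Q$, while regular critical majors arise for non-hole-endpoints; $\sigma_3$-periodicity of an endpoint of $M_{\Uf}$ should force the tag of $\Uf$ to be a hole-endpoint, contradicting the regular critical hypothesis. On the dynamical side, by Corollary~\ref{c:uni-dec} the decoration with argument $\alpha(D_c)$ is unique, so $f^p(\om_2)\in D_c$; tracking the critical sector $S(\om_2)$ via Lemmas~\ref{l:bdminsec} and~\ref{l:noncr-sec} shows that $f^p$ restricts to a two-to-one self-map of $S(\om_2)$ with critical point $\om_2$, which combined with the backward stability machinery of the preceding section would force the forward orbit of $\om_2$ to accumulate at $\om_2$, again contradicting non-recurrence.

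The main obstacle is to rigorously close this last step. The combinatorial route requires verifying precisely which endpoints of majors are permitted in the Cantor component of $Q$, invoking results of \cite{BOPT} in a form that distinguishes the periodic and regular critical cases sharply; the dynamical route requires careful bookkeeping of pullbacks of $S(\om_2)$ together with the shrinking estimates of Lemma~\ref{l:Del} and Lemma~\ref{l:bnd}, to promote the combinatorial return to an actual topological return of $\om_2$ to any neighborhood of itself.
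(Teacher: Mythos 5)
Your first two steps are sound and genuinely different in flavor from the paper's argument: using Proposition \ref{p:alldec} and Lemma \ref{l:z2} you correctly get that $\al(D_c)$ is $\si_2$-periodic, and via $\tau$ and Lemma \ref{l:cridi3} that, under the regular critical hypothesis, some endpoint of $M_{\Uf}$ is $\si_3$-periodic. But the proof is not complete, and the missing step is exactly the crux. Because $\tau$ \emph{collapses} a critical major to a point, $\si_2$-periodicity of $\tau(M_{\Uf})$ is a priori compatible with $M_{\Uf}$ being critical; to exclude it you need the statement that a regular critical major of an invariant quadratic gap (with Cantor basis) cannot have a $\si_3$-periodic endpoint, i.e.\ that periodicity forces the tag to be a hole-endpoint of $Q$. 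You only assert that this ``should'' follow from \cite{BOPT}; it is not among the facts quoted in Subsection \ref{sss:family}, and it is delicate (for instance, the critical chord $\ol{0\,(1/3)}$ has a fixed endpoint and its orbit avoids the hole $(0,1/3)$; it fails to be a major of the relevant gaps only because of the no-isolated-points/Cantor restriction, which is precisely the kind of verification you would have to supply). Your alternative ``dynamical route'' is also off target: a combinatorial return of the critical sector $S(\om_2)$ (or of $D_c$) to itself does not force the forward orbit of the point $\om_2$ to accumulate at $\om_2$ — sectors and decorations are large sets — so you cannot hope to contradict non-recurrence this way, and no such contradiction is needed.

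For comparison, the paper's proof is direct and avoids the collapsing problem entirely: by Lemmas \ref{l:dec-sec} and \ref{l:cridi3} the external rays bounding the minimal sector $S(\om_2)$ have arguments equal to the endpoints of $M_{\Uf}$; by Proposition \ref{p:alldec} the critical value decoration returns to $D_c$, and Lemmas \ref{l:noncr-sec} and \ref{l:bdminsec} then show that $S(f(\om_2))$ is eventually mapped back to $S(\om_2)$ with boundary rays going onto boundary rays. Hence the endpoints of $M_{\Uf}$ are themselves $\si_3$-periodic (not merely their $\tau$-image $\si_2$-periodic), which rules out a critical major outright, since a critical chord cannot have both endpoints periodic. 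If you want to salvage your contradiction scheme, the cleanest repair is to replace your last step by this sector argument, or else to actually prove the combinatorial fact about $Q$ that you invoke.
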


\begin{proof}
Consider the critical value decoration $D_v$.
By Proposition \ref{p:alldec}, $D_v$ eventually maps back to $D_c$.
The rays on the boundary of the minimal sector $S(\om_2)$ map to the rays on the boundary of $S(f(\om_2))$ under $f$.
This follows from Lemma \ref{l:bdminsec}.
The sector $S(f(\om_2))$ is eventually mapped to $S(\om_2)$ by Lemma \ref{l:noncr-sec}.
Therefore, $\Uf$ is of periodic type.
\end{proof}

\section{Main theorem}\label{s:mt}

To prove the main result we rely upon recent powerful results obtained in \cite{dl21}.
To state them, we need to state a property of quadratic polynomials from the Main Cardioid
of the Mandelbrot set established in \cite{chi08}.

\begin{thm}[\cite{chi08}]\label{t:chi}
Let $Q$ be a quadratic polynomial with a fixed
point $a$ such that $P'(a)=e^{2\pi i \ta}$ with irrational $\ta$. Then the limit set
$\om(c_Q)$ of the critical point $c_Q$ of $Q$ is a continuum.
\end{thm}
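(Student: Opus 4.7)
My plan is to split the proof into compactness and connectedness. Compactness is essentially automatic: since $Q$ is quadratic with a non-repelling fixed point $a$, the Fatou--Shishikura inequality forces the critical orbit to stay bounded (the single non-repelling slot is already occupied by $a$, and if $c_Q$ escaped then $J(Q)$ would be disconnected, forcing $a$ to be repelling). Hence $\om(c_Q)\subset K(Q)$ is closed and bounded.

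For connectedness I would rely on P\'erez-Marco's hedgehog theorem: for any holomorphic germ with a fixed point of multiplier $e^{2\pi i\ta}$ and irrational $\ta$, there exists a compact connected full invariant continuum $H\ni a$ (a \emph{hedgehog}) on which the dynamics is a homeomorphism semi-conjugate to rotation by $\ta$. My target would be to show that $\om(c_Q)$ coincides with some such hedgehog, from which connectedness follows immediately.

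The argument splits into two inclusions. First, $\om(c_Q)\supseteq H$ for a suitable hedgehog. In the Siegel case, the boundary $\partial\Delta$ of the Siegel disk is contained in $H$, and it is classical that the critical orbit of a quadratic Siegel polynomial accumulates on $\partial\Delta$. In the Cremer case, if $c_Q$ did not accumulate at $a$, one could construct a Koenigs-type linearization of $Q$ on a forward-invariant punctured neighborhood of $a$ omitting the critical orbit, contradicting the non-linearizability of the Cremer point. Second, the reverse inclusion $\om(c_Q)\subseteq H$: this is the hard direction and amounts to ruling out persistent orbit excursions whose cluster points sit outside every hedgehog around $a$.

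The main obstacle is precisely this second inclusion in the Cremer case, where no local linearizing coordinates exist and orbit behaviour near $a$ is notoriously difficult to control. Chirka's argument in \cite{chi08} handles this by delicate complex-analytic estimates on orbit behaviour in thin neighborhoods of the hedgehog, and this is the technical heart of the theorem; these estimates go well beyond the combinatorial tools developed elsewhere in the present paper.
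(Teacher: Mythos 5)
You should first note that the paper does not prove Theorem \ref{t:chi} at all: it is imported as a black box from \cite{chi08} (the author is D.~Childers, not ``Chirka''), so your proposal has to stand on its own --- and it does not. The compactness half is fine, but the connectedness strategy, ``show that $\om(c_Q)$ coincides with a hedgehog'', is broken. Every P\'erez-Marco hedgehog contains the fixed point $a$ by definition, while in the Siegel case $a\notin\om(c_Q)$: the critical point of a quadratic Siegel polynomial lies in $J(Q)$, its orbit never enters the Siegel disk $\Delta$ (otherwise the postcritical closure could not contain $\partial\Delta$), and $\Delta$ is an open neighborhood of $a$ disjoint from $J(Q)$. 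Hence $\om(c_Q)$ is never equal to, nor even contains, a hedgehog in the Siegel case; the classical fact you invoke gives only $\partial\Delta\subset\om(c_Q)$, which by itself does not rule out extra components of $\om(c_Q)$ away from $\partial\Delta$. So your first inclusion $H\subset\om(c_Q)$ already fails for half of the cases covered by the theorem, and with it the proposed identification.

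In the Cremer case the situation is no better: the inclusion $\om(c_Q)\subset H$, which is where all the difficulty sits, is not argued at all --- you defer it to \cite{chi08}, i.e.\ to the very reference whose statement you are meant to prove, which makes the proposal circular; moreover $\om(c_Q)=H$ is strictly stronger than connectedness and is not what \cite{chi08} supplies. Your sketch that $a\in\om(c_Q)$ (a ``Koenigs-type linearization on a forward-invariant punctured neighborhood'') is also not a proof as written; the clean route is Ma\~n\'e's theorem (Theorem \ref{t:mane}, \cite{man93}): if $a$ avoided the limit set of every recurrent critical point, small neighborhoods of $a$ would have exponentially shrinking pullbacks, which is impossible at an indifferent fixed point. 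But even granting $a\in\om(c_Q)$ and $H\subset\om(c_Q)$, connectedness of $\om(c_Q)$ does not follow from any of this; that is exactly the content of Childers' theorem, and your proposal contains no argument for it.
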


In the situation of Theorem \ref{t:chi} the set $\thu(\om(c_Q))$ is called the \emph{mother hedgehog} of $Q$ and is
denoted by $M_Q$.

\begin{thm}[\cite{dl21}]\label{t:md}
Let $Q$ be a quadratic polynomial with a fixed point $a$ such that $Q'(a)=e^{2\pi i \ta}$
with irrational $\ta$. Then $Q^{-1}(M_Q)$ is the union of $M_Q$ and a continuum $M_Q'$ such
that $M_Q\cap M_Q'=\{c\}$.
\end{thm}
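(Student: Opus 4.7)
The approach is to exploit that $M_Q$ is $Q$-forward invariant together with the degree-two structure of $Q$. First I would establish two key ingredients: $Q(M_Q) = M_Q$ and $c_Q \in M_Q$. Forward invariance of $\om(c_Q)$ combined with $Q$ sending full continua to full continua gives $Q(M_Q)\subseteq M_Q$, with equality following from continuum-theoretic considerations about maximality of $M_Q$. For $c_Q \in M_Q$, in the Siegel case $M_Q$ equals the closed Siegel disk and $c_Q$ lies on its boundary $\om(c_Q)$; in the Cremer case one appeals to P\'erez-Marco's hedgehog theory, which places $c_Q$ inside any indestructible hedgehog at $a$, and such a hedgehog is contained in $M_Q$. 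Consequently the critical value $v=Q(c_Q)$ also lies in $M_Q$.

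With $v\in M_Q$, a Riemann--Hurwitz argument shows that $Q^{-1}(M_Q)$ is a full connected continuum of the plane: fullness follows from fullness of $M_Q$ together with $Q$ being a polynomial, and connectedness from the fact that the unique finite critical value lies in $M_Q$. Since $M_Q\subset Q^{-1}(M_Q)$ by forward invariance, a natural candidate for $M_Q'$ is the closure of the ``other sheet'' of the double cover $Q:Q^{-1}(M_Q)\to M_Q$, i.e.\ the closure of the set of points in $Q^{-1}(M_Q)\sm M_Q$. The remaining tasks are (i) to show this candidate is a continuum and (ii) to show it meets $M_Q$ only at $c_Q$.

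For (ii), the key reduction is that $Q|_{M_Q}:M_Q\to M_Q$ must be injective away from $c_Q$: otherwise there would be generic $y\in M_Q$ with both $Q$-preimages in $M_Q$, and propagating this via continuity would force $Q^{-1}(M_Q)=M_Q$, contradicting the non-triviality of the second sheet. In the Siegel case this injectivity is immediate from the rotational model on the closed Siegel disk. In the Cremer case, which is the main obstacle, one invokes P\'erez-Marco's theorem that $Q$ acts as an orientation-preserving homeomorphism on every indestructible hedgehog at an irrationally indifferent fixed point, and then transfers injectivity to $M_Q=\thu(\om(c_Q))$ via a nested-limit argument using the fact that the mother hedgehog is a decreasing intersection of such invariant hedgehogs. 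Once injectivity is in hand, (i) also follows: the non-trivial deck transformation of the degree-two branched cover $Q$ sends $M_Q$ onto $M_Q'$, making $M_Q'$ a continuum homeomorphic to $M_Q$, and the ramification locus of $Q$ forces $M_Q\cap M_Q'=\{c_Q\}$.
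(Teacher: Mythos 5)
There is a genuine gap, and it is worth noting first that the paper itself does not prove this statement: Theorem \ref{t:md} is quoted as an external result of Dudko--Lyubich \cite{dl21}, whose proof rests on uniform a priori bounds for neutral renormalization, so the only question is whether your soft argument can substitute for that machinery — and it cannot. Your reduction is fine as far as it goes: writing $\sigma$ for the deck involution of $Q$ about $c_Q$, forward invariance $Q(M_Q)=M_Q$ gives $Q^{-1}(M_Q)=M_Q\cup\sigma(M_Q)$, so the theorem amounts exactly to (i) injectivity of $Q|_{M_Q}$, i.e.\ $M_Q\cap\sigma(M_Q)=\{c_Q\}$, and in particular (ii) $Q^{-1}(M_Q)\ne M_Q$. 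These are precisely the points you do not establish. The appeal to P\'erez-Marco is misplaced: his hedgehogs are constructed inside Jordan domains on whose closures $Q$ is univalent, so no such hedgehog can contain $c_Q$, the set $M_Q=\thu(\om(c_Q))$ is not a decreasing intersection of such hedgehogs, and whether the critical point even lies on the mother hedgehog is exactly the open question in the title of the cited paper \cite{chi08}; hence ``transferring injectivity by a nested-limit argument'' has no basis. Similarly, in the Siegel case the claims that $M_Q$ is the closed Siegel disk and that $c_Q$ lies on its boundary are not elementary facts (for general rotation numbers they are deep and, in this generality, open), so that case is not ``immediate from the rotational model.''

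Your fallback justification of injectivity is also a non sequitur: one pair $z\ne\sigma(z)$ with both points in $M_Q\sm\{c_Q\}$ does not ``propagate by continuity'' to full backward invariance $Q^{-1}(M_Q)=M_Q$; and even if it did, you have not excluded that alternative — backward invariance would only force $M_Q=K(Q)$, i.e.\ $\om(c_Q)\supseteq J(Q)$, and ruling this out is itself part of the content of the theorem rather than an evident contradiction. In short, the Riemann--Hurwitz and deck-transformation bookkeeping is correct, but every genuinely hard step is deferred to assertions that are unknown or false; the statement should be treated, as the paper treats it, as a deep external input rather than something recoverable by these means.
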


Theorem \ref{t:md} allows one to view $Q$ similar to how Siegel quadratic
polynomials with locally connected Julia sets are viewed. By Theorem \ref{t:md} there are infinite concatenations of
pullbacks of $M_Q$ analogous to external rays in that they partition $J(Q)$ in
pieces and, thus, enable further study of topology $J(Q)$. Indeed, by Theorem \ref{t:md} countably many
pullbacks of $M_Q'$ are attached to $M_Q$ at preimages of $c_Q$ that belong to $M_Q$. Each of them is eventually mapped onto
$M_Q$. Hence each of them has countably many attached to it pullbacks of $M_Q$, etc. The entire
grand orbit of $M_Q$ can be viewed as a ``spiderweb'' of concatenated in various ways pullbacks of $M_Q$.

Each pullback $T$ of $M_Q$ can be characterized by a sequence of integers $\thr(T)$ called the \emph{thread} of $T$.
It reflects the ``journey'' of $T$ to $M_Q$,
and, simultaneously, the concatenation (the \emph{chain}) $\chai_T$ (of pullbacks of $M_Q$) that connects $M_Q$ and $T$.
In what follows we denote chains by boldface capital letters (e.g., $\bA$ or $\bX$) and threads by small boldface letters (e.g.,
$\ba$ and $\bx$). To study chains of pullbacks of $M_Q$ we need the next lemma.

\begin{lem}\label{l:chains}
Distinct pullbacks $S$ and $T$ of $M_Q$ can only intersect if they meet at a point that is a preimage of
$c_Q$. Two chains share an initial finite string and are otherwise disjoint.
\end{lem}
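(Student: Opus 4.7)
The plan is to prove both claims by induction on the \emph{level} of pullbacks: a pullback $T$ has level $n$ if $n$ is the minimal integer with $Q^n(T) = M_Q$. The base case is Theorem \ref{t:md}: the pullbacks of level $\le 1$ are $M_Q$ and $M_Q'$, which meet only at $c_Q$ (itself a preimage of $c_Q$ of order $0$).

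For the first claim, let $S$ and $T$ be distinct pullbacks of levels $n+1$ and $\le n+1$, respectively, and assume the claim for pullbacks of level $\le n$. Set $S' = Q(S)$ and $T' = Q(T)$, both of level $\le n$. If $S' = T'$, then $S$ and $T$ are two distinct pullback branches of $Q$ over the same set $S'$; the branched-cover structure of $Q$ over $S'$ is modeled by Theorem \ref{t:md} (applied to $S'$ in place of $M_Q$) and forces the two branches to meet only at a critical point of $Q$, which must be $c_Q$. If $S' \ne T'$, the inductive hypothesis gives $S' \cap T' \subset \{p\}$ with $p$ a preimage of $c_Q$, so $S \cap T \subset Q^{-1}(p)$, whose points are themselves preimages of $c_Q$.

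For the second claim, I would show that the canonical chain $\chai_T$ from $M_Q$ to a pullback $T$ is unique, so that the pullback-attachment structure is a tree; the ``share an initial finite string and otherwise disjoint'' property is then the standard uniqueness-of-paths property of trees. Uniqueness can be established by inductive descent: given two chains to the same $T$ first differing at index $k+1$, apply $Q$ to the portions after the common prefix to obtain two shorter chains ending at the same pullback of lower level, contradicting inductive uniqueness.

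The main obstacle is rigorously justifying the inductive descent — specifically, that $Q$ maps a chain to a chain (after collapsing consecutive duplicates). The key point is that if $(U_0, \dots, U_r)$ is a chain with $U_i \cap U_{i+1} = \{p_i\}$ a preimage of $c_Q$, then $(Q(U_0), \dots, Q(U_r))$ is a sequence of pullbacks of lower level whose consecutive intersections contain $Q(p_i)$, still a preimage of $c_Q$; the first claim of the lemma controls these intersections precisely. Combined with the Theorem \ref{t:md} base case that no nontrivial cycle exists among $\{M_Q, M_Q'\}$, this closes the induction and yields the tree structure of chains.
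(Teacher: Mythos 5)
The weak point is your Case A, and it is a genuine gap rather than mere terseness. Theorem \ref{t:md} is a statement specifically about the mother hedgehog $M_Q$ (an invariant continuum containing the critical point and critical value, with $Q^{-1}(M_Q)=M_Q\cup M_Q'$ and $M_Q\cap M_Q'=\{c_Q\}$, proved via the a priori bounds of \cite{dl21}); it cannot be ``applied to $S'$ in place of $M_Q$''. There is no general ``branched-cover structure of $Q$ over $S'$ modeled by Theorem \ref{t:md}'' for an arbitrary pullback $S'$: if the critical value $v_Q=Q(c_Q)$ does not lie in $S'$, then $Q^{-1}(S')$ consists of two disjoint homeomorphic copies of $S'$, so two distinct pullbacks over $S'$ are disjoint and there is nothing to prove; the dangerous case is precisely $v_Q\in S'$, where $Q^{-1}(S')$ is a single continuum mapping two-to-one and the ``two branches'' are not even well defined. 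Excluding that case (for $S'\ne M_Q$) is exactly where dynamical input is needed, and your proposal supplies none. The paper handles this configuration differently: it iterates the pair $S,T$ forward until either one image equals $M_Q$ while the other is an iterated pullback of $M_Q'$ (then invariance of $M_Q$ and $M_Q\cap M_Q'=\{c_Q\}$ force the intersection point to be a preimage of $c_Q$), or the images merge before reaching $M_Q$, which is ruled out using that $c_Q$ is not periodic.

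Your induction can in fact be repaired without the illegitimate appeal, but the repair needs exactly the missing ingredient: if $S'=T'\ne M_Q$ and $v_Q\in S'$, then $v_Q\in S'\cap M_Q$, and the inductive hypothesis applied to the pair $(S',M_Q)$ (both of level $\le n$) makes $v_Q$ a preimage of $c_Q$, i.e.\ $c_Q$ periodic --- impossible since the fixed point is irrationally indifferent; the remaining subcase $S'=T'=M_Q$ is the only place where Theorem \ref{t:md} legitimately enters. Your Case B is fine (though the inductive hypothesis gives only that $S'\cap T'$ consists of preimages of $c_Q$, not that it is a single point $p$; the conclusion survives). As for the second claim, the paper simply leaves it to the reader, and your tree-structure sketch is of comparable (incomplete) rigor; the substantive defect to fix is Case A.
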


\begin{proof}
Observe that $M_Q$ is a full continuum. Therefore all pullbacks of $M_q$ (including $M'_Q$) are full continua too.
It follows that if $T$ is a pullback of $M_Q$ and $c_Q\notin T$, then $P|_T$ is a homeomorphism to the image.
Suppose that $S\ne T$ are pullbacks of $M_Q$ and $x\in S\cap T$.
Let us apply $Q$ to $S$ and $T$ step by step. Then there are two cases.

(1) At some earliest moment $n$ we have $Q^n(S)=M_Q$ and  $Q^n(T)\ne M_Q$. Then $Q^n(T)$ is a $Q^m$-pullback of
$M'_Q$ for some $m\ge 0$. It follows that $Q^{m+n}(x)=c_Q$ as desired.

(2) At some earliest moment $n$ we have $Q^n(S)=Q^n(T)=A$, for each $i<n,$ $Q^i(S)\ne Q^i(T)$ and neither of these
sets equals $M_Q$. Then $Q^n(S)\ne M_Q$ either because otherwise we must have $Q^{n-1}(S)=M_Q$, a contradiction.
The set $Q^{n-1}(S\cup T)=B$ is a pullback of the set $A$. Together with the fact that $Q|_B$ is not a homeomorphism this
implies that $c_Q\in Q^{n-1}(S)\cap Q^{n-1}(T)$. Since neither of these two sets equals $M_Q$ and they are distinct, it follows that
at most one of them equals $M'_Q$ while the other one (denote is by $Z$) is a pullback of $M'_Q$. We have that $c_Q\in Z\cap M'_Q$ and, for
some $k>0$, $Q^k(Z)=M'_Q$ while $Q^k(M'_Q)=M_Q$. It follows that $Q^K(c_Q)=c_Q$, a contradiction.

We leave the second claim of the lemma to the reader.
\end{proof}

Threads are one-sided sequences, infinite on the left. For each pullback $S$ in $\chai_T$
choose the least number $m(S)$ such that $Q^{m(S)}(S)=M_Q$ while $M_Q$ itself is associated with infinite on the left
string of zeros denoted by $\ol{0}$. Thus, if $\chai_T=(M_Q, S_1, \dots, S_{k-1}, S_k=T)$ then $\thr(T)=(\ol{0}, m(S_1), \dots, m(T))$ so that
$\thr(M_Q)=(\ol{0}),$ $\thr(M'_Q)=(\ol{0}, 1)$ etc.
Observe that given a pullback $T$ of $M_Q$, its thread $\thr(T)$ is a sequence with $\ol{0}$ followed by a finite
string of strictly growing positive integers.

If $\thr(T)=(\ol{0}, m_1, \dots, m_k)$ then $S_1$ is the $Q^{m_1}$-pullback of $M_Q$ attached to $M_Q$.
Under $Q^{m_1}$ the set $S_2$ maps forward so that $Q^{m_1}(S_2)$ is attached to $M_Q$ and is later, under the action of $Q^{m_2-m_1}$, mapped to $M_Q$.
In general, each power $Q^{m_j}$ moves all pullbacks $S_i\in \chai_T, j\le i\le k$ forward so that the
chain $Q^{m_j}(M_Q, S_1, \dots, S_{k-1}, S_k)$ equals
$$(M_Q, Q^{m_j}(S_{j+1}), Q^{m_j}(S_{j+2}), \dots, Q^{m_j}(S_k))$$
and the associated thread
is $(\ol{0}, m_{j+1}-m_j, \dots, m_k-m_j)$. Thus,
the action of $Q$ on threads will be denoted by $\eta$ and translates as
$\eta: (\ol{0}, m_1, \dots, m_k)\mapsto (\ol{0}, m_1-1, \dots, m_k-1)$
(notice that if $m_1=1$ the number $m_1-1=0$ should be viewed as a part of infinite
strings of zeros).

Expanding this idea, consider infinite chains (of pullbacks of $M_Q$) and characterize them by their \emph{threads},
i.e. infinite in both directions sequences of integers $(\ol{0}, m_1, m_2, \dots, )$ where $0<m_1<\dots$
(we will still use the same notation for infinite chains/threads as for finite ones). The corresponding
infinite chain of pullbacks of $M_Q$ is $(M_Q, S_1, \dots)$ that can be described as follows: the set $S_1$ is the $Q^{m_1}$-pullback of $M_Q$,
attached to $M_Q$, the set $S_2$ is the $Q^{m_2}$-pullback of $M_Q$, attached to $S_1$, etc. %$\mathbf{x}$ or $\mathbf{a}$.
Clearly, the map $\eta$ can be defined on infinite threads as follows: if $\bx=(\ol{0}, m_1, m_2, \dots)$, then $\eta(\bx)=(\ol{0}, m_1-1, m_2-2, \dots)$.
This action corresponds to the action of $Q$ on infinite chains so that if $\thr(\bX)=\bx$, then $\thr(Q(\bX))=\eta(\bx))$
(i.e., $\thr\circ Q=\eta\circ \thr$). Clearly, the map $\eta$ defined on infinite threads has periodic points.
E.g., by definition $\ba=(\ol{0}, m_1, m_2, \dots)$ is $\eta$-fixed if $m_1-1=0,$
$m_2-1=m_1,$ etc, i.e. the only $\eta$-fixed thread is $(\ol{0}, 1, 2, 3, \dots)$. Of course, there are other periodic infinite chains.

Suppose that $(\ol{0}, m_1, \dots)$ is periodic of (minimal) period $N$ (in what follows by ``period'' we always mean ``minimal period'').
It is easy to see that this means the following: there exists $k$ such that
$m_k=N$ and, moreover, for any $j=lk+r, 0\le l, 0\le r<k$ we have $m_j=lN+m_r$.
Recall that the \emph{topological hull} $\thu(A)$ of a compact set $A\subset \C$ is the complement
of the unbounded complementary domain of $A$.

\begin{lem}\label{l:exponent}
Let $Q$ be a quadratic polynomial with a fixed point $a$ such that $Q'(a)=e^{2\pi i \ta}$
with irrational $\ta$. Then claims $(1) - (4)$ hold.

\begin{enumerate}

\item Let $T$ be a pullback of $M_Q$ disjoint from $M_Q$. Then there exists $C_1>0$ and $0<q<1$ such that
any $Q^N$-pullback of $T$ has diameter less than $C_1q^N$ for any $N$.

\item Distinct infinite chains %of pullbacks of $M_Q$ cannot
converge to distinct points.

\item Suppose that $\thr(\bX)=\bx$ is periodic of period $N$. Then $\bX$ converges to a periodic point of period $N$
that does not belong to $M_Q$. Any preperiodic chain converges to a preperiodic point.

\item The only periodic point in $M_Q$ is the fixed point $a$. All periodic points $y\ne a$ of $Q$ are limits of the
corresponding chains of pullbacks of $M_Q$.

\end{enumerate}

\end{lem}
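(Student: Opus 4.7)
The plan is to prove the four claims in order, with claim $(1)$ serving as the main technical ingredient (derived from Ma\~n\'e's theorem) and the remaining claims following from $(1)$ combined with Theorem \ref{t:md} and Lemma \ref{l:chains}.

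For $(1)$, I would note that $c_Q$ is the only critical point of $Q$ and, by Theorem \ref{t:chi} together with the definition of $M_Q$, its $\omega$-limit set lies in $M_Q$. Hence every point of $T$ is disjoint from the limit set of any recurrent critical point and admits a Ma\~n\'e neighborhood (Theorem \ref{t:mane}). Cover $T$ by finitely many such neighborhoods $W_1,\dots,W_r$ with decay rates $q_i<1$ and constants $C_i$, and set $q=\max q_i$, $C=\max C_i$. Any connected component of $Q^{-N}(T)$ is covered by at most $r$ components of the sets $Q^{-N}(W_i)$ (each of diameter at most $Cq^N$), and is connected, so its diameter is at most $C_1q^N$ with $C_1=rC$.

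For $(2)$ and $(3)$ I would first establish convergence of every infinite chain. Each $S_n$ is attached to $S_{n-1}$ at a preimage of $c_Q$ and, by Lemma \ref{l:chains}, meets $M_Q$ in only finitely many preimages of $c_Q$. A compact neighborhood $T$ of $S_2$ can therefore be chosen disjoint from $M_Q$, and by the structure of the thread each $S_n$ with $n$ large is an iterated $Q$-pullback of a fixed pullback of $M_Q$ close to $T$; claim $(1)$ then yields $\mathrm{diam}(S_n)\to 0$, and together with the attachment condition this forces the chain to converge to a single point. Claim $(2)$: two distinct chains share a maximal initial segment ending in $S_k=S_k'$ (by Lemma \ref{l:chains}) and afterwards lie in different components of $\C\sm S_k$, so their limits are separated by $S_k$ and therefore differ. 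Claim $(3)$: with the thread of period $N$ having $m_k=N$, the map $Q^N$ sends the chain to the chain obtained by discarding its first $k$ elements, so the limit point $y$ satisfies $Q^N(y)=y$. If $y$ had period $N'<N$, then $\eta^{N'}(\bx)\ne\bx$ by minimality and $Q^{N'}(\bX)$ would be a distinct chain also converging to $y$, contradicting $(2)$. The preperiodic case reduces to the periodic one by considering a tail of the thread.

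For $(4)$, I would appeal to the classical hedgehog result (P\'erez-Marco, also established in the setting of \cite{dl21}): the dynamics induced by $Q$ on $M_Q$ preserves an irrational rotation number inherited from $a$, forcing $a$ to be the unique periodic point of $Q$ in $M_Q$. Given this, any periodic $y\ne a$ lies in $J(Q)\sm M_Q$, and since backward orbits of any point of $J(Q)$ are dense in $J(Q)$, pullbacks of $M_Q$ accumulate at $y$. I would build a chain converging to $y$ inductively: having chosen $S_1,\dots,S_n$ each separating $y$ from $M_Q$, take $S_{n+1}$ to be the pullback of $M_Q$ of smallest $Q$-depth attached to $S_n$ on the side containing $y$ and still separating $y$ from $S_n$. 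The resulting infinite chain converges by the argument above; since its limit must lie on the $y$-side of every $S_n$ while $\mathrm{diam}(S_n)\to 0$, the limit is $y$. The main obstacle I expect is in $(4)$: making rigorous the inductive choice of the $S_n$ (in particular, verifying that such an innermost separating pullback exists at every stage so that $y$ is indeed separated from $M_Q$ by infinitely many pullbacks), and invoking the hedgehog theorem cleanly.
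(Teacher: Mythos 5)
Your overall strategy (Ma\~n\'e's theorem for claim (1), then chains and threads) is the same as the paper's, but several steps as written do not work. In (1), the count ``at most $r$ components of $Q^{-N}(W_i)$ cover a component of $Q^{-N}(T)$'' is exactly where dynamical input is needed and you supply none: if $Q^N$ has degree $>1$ on the relevant pullback, one $W_i$ can contribute several preimage components, and a priori the degree (hence the constant) could grow with $N$, ruining the bound $C_1q^N$. The paper closes this by observing that $c_Q$ is recurrent, so its whole forward orbit lies in $\om(c_Q)\subset M_Q$; hence a Jordan neighborhood $V\supset T$ with $\ol V\cap M_Q=\0$ pulls back univalently and each Ma\~n\'e disk contributes exactly one piece. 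In (2) your separation argument fails outright: pullbacks of $M_Q$ are \emph{full} continua, so $\C\sm S_k$ is connected and there are no ``different components of $\C\sm S_k$'' in which the two tails could lie (they may even be attached at the same preimage of $c_Q$). The paper argues instead that two distinct chains converging to the same point would, together with that point, block some points of the pullbacks (hence of $J(Q)$) from infinity, which is impossible. Moreover, your preliminary claim that \emph{every} infinite chain converges is unjustified: for an aperiodic thread the sets $S_n$ need not be iterated pullbacks of a fixed (or even finitely many) pullback(s) of $M_Q$, and the constants in (1) depend on $T$; the paper explicitly refrains from asserting convergence of arbitrary chains and proves it only for (pre)periodic threads, where periodicity yields a finite collection of ``last detached'' pullbacks to which (1) applies uniformly, giving $\mathrm{diam}(S_j)<Cq^{m_{j-1}}$. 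Since your (3) presupposes that general convergence, it inherits the gap, and you also never address the assertion in (3) that the limit point does not lie in $M_Q$.

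For (4) the gap you flag yourself is genuine and is the heart of the claim: density of backward orbits gives pullbacks of $M_Q$ accumulating at a periodic point $y$, but it produces no \emph{chain} converging to $y$ --- the ``innermost separating pullback'' need not exist, nothing forces $y$ to be separated from $M_Q$ by infinitely many pullbacks, and your construction never uses periodicity of $y$, so it cannot yield the (pre)periodic thread one expects. The paper avoids this entirely by a counting argument: claims (1)--(3) also hold for a quadratic Siegel polynomial $P_{sie}$ with locally connected Julia set, where one checks directly that every periodic point other than the fixed point is the limit of a periodic chain; periodic threads are the same combinatorial objects for $Q$ and $P_{sie}$, and both maps have the same number of periodic points of each period, so the injection (by (2) and (3)) of periodic threads into periodic points of $Q$ outside $M_Q$ must exhaust all periodic points different from $a$. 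In particular, the first half of (4) --- that $a$ is the only periodic point in $M_Q$ --- is a \emph{consequence} of this count in the paper, not an input; importing it from hedgehog theory as you propose would itself require justifying that P\'erez-Marco's theorem (stated for hedgehogs in linearization neighborhoods) applies to the mother hedgehog $\thu(\om(c_Q))$.
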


\begin{proof}
(1) The fact that $M_Q=\thu(\om(c_Q))$ implies that $T$ is disjoint from $\om(c_Q)$. By Ma\~n\'e \cite{man93}
(see Theorem \ref{t:mane}) we can cover $\thu(T)$ with small Ma\~n\'e disks $U_1, \dots, U_k$ so that their
union is itself a Jordan disk $V$ with $\ol{V}\cap M_Q=\0$ so that any $Q^n$-pullback of each $U_i$ is of diameter less than $Cq^n$
for some $C>0$ and $0<q<1$ (these depend solely on $Q$). Since $\ol{V}\cap \thu(\om(c_Q))=\0$ then any
$Q^N$-pullback of $V$ is a topological disk $V'$ that homeomorphically maps onto $V$ under $Q^N$
(observe that since $c_Q$ is recurrent, the orbit of $c_Q$ is contained in $\om(c_Q)$. Hence each $U_i$ is
represented in $V'$ by exactly one pullback $U'_i$ whose diameter, by Theorem \ref{t:mane}, is less than $Cq^N$.
Hence the diameter of $V'$ is less than $kCq^N$ and it remains to set $C_1=kC$.

(2) Now, if two distinct
chains of pullbacks of $M_Q$ converge to the same point, it would follow that
some points of these pullbacks are blocked from infinity by the union of these chains and, therefore, cannot belong to $J(Q)$,
a contradiction. Observe that we are not claiming that all chains converge. However it two chains do converge,they cannot converge
to the same as was just proven.

(3) If $\bx=(\ol{0}, m_1, \dots)$, then there exists $k$ such that
$m_k=N$ and, moreover, for any $j=lk+r, 0\le l, 0\le r<k$ we have $m_j=lN+m_r$.
Suppose that $S$ is the $j$-th element of $\bX$ (associated with $m_j$). It is
preceded in $\bX$ by another pullback $S'$ of $M_Q$, associated with $m_{j-1}$,
and until $S'$ maps to $M_Q$ under $Q^{m_j}$, the set $S$ remains detached from $M_Q$.
Thus, $Q^{m_{j-1}-1}(S)$ is a specific pullback of $M_Q$ detached from $M_Q$. Evidently, there is a finite
collection of such ``last detached'' pullbacks of $M_Q$ that serve all elements of $\bX$. By (1) we can choose
numbers $C>0$ and $0<q<1$ that serve all these ``last detached'' pullbacks, and, after straightforward adjustments,
conclude that $\mathrm{diam}(S)<Cq^{m_{j-1}}$. Evidently, this implies that $\bX$ converges to a $Q^N$-fixed point $z$.
If the period of $z$ were less than $N$, there would exist a chain distinct from $\bX$ but converging to $z$,
a contradiction to (2). Moreover, $z$ cannot belong to $M_Q$ as otherwise some points of $M_Q$ would be blocked from
infinity (similar to the argument in (2)). The last part of claim (3) now follows.

(4) Claims (1) - (3) hold for some quadratic polynomial $P_{sie}$ with Siegel fixed point $b$ and
locally connected Julia set. In case of such polynomials it is easy to check that all periodic points
of $P_{sie}$ except for $b$ can be obtained as limits of periodic chains. However chains of $Q$ and chains
of $P$ are in one-to-one correspondence with corresponding periodic threads.
Hence the number of periodic points of any period $n$ is the same for $Q$ and for $P_{sie}$
and coincides with the number of periodic threads of that period.
Recall that except for $b$ there are no $Q$-periodic points that belong to $M_Q$ as $M_Q$ is a $Q$-invariant Jordan curve
on which $Q$ is conjugate with an irrational rotation. It follows from these counts of periodic points for $Q$ and $P_{sie}$ that
no periodic point of $Q$, except for $a$, can belong to $M_Q$.
\end{proof}

Let us now combine Lemma \ref{l:exponent} and tools from \cite{bfmot12}.

\begin{thm}\label{t:allc}
Let $P$ be a polynomial of any degree with a fixed non-repelling point $a\in B^*$
where $B^*$ is an invariant filled quadratic-like
Julia set of $P$. Assume that any periodic neutral point $x\ne a$ of $P$ is parabolic.
Then for a periodic point $z\in B^*\sm \{a\}$ and a periodic external ray
$R$ of $P$ the fact that $z$ belongs to the impression $I$ of $R$ implies that
I=$\{z\}$ (thus, $J(P)$ is locally connected at $z$).
\end{thm}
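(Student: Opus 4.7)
The plan is to reduce to fixed-point data, transfer to the quadratic counterpart via the Straightening Theorem, and use the chain structure developed before Lemma \ref{l:exponent} to trap the impression.  Passing to a sufficient iterate of $P$, I may assume that $R$ is fixed and that $z$ is a fixed point, so that the impression $I$ is $P$-invariant; by hypothesis $z$ is then either repelling or parabolic.  Applying the Straightening Theorem to $P^*=P|_{B^*}$, I pick a quadratic polynomial $Q$ hybrid equivalent to $P^*$ with non-repelling fixed point $a_Q$ corresponding to $a$, and the assumption that every periodic neutral point of $P$ distinct from $a$ is parabolic transports verbatim to $Q$ through the hybrid conjugacy.

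Next I split by the multiplier of $P$ at $a$.  When $a$ is attracting or parabolic, $J(Q)$ is locally connected by classical Douady--Hubbard type results; the hybrid equivalence then yields local connectivity of $J(P^*)$ at $z$, from which $I=\{z\}$ follows easily.  The substantive case is when $a$ is irrationally neutral.  There Theorem \ref{t:chi} produces the mother hedgehog $M_Q$ of $Q$, and Theorem \ref{t:md} supplies the combinatorial structure of its pullbacks and of the chains and threads analysed just above Lemma \ref{l:exponent}.  Transporting this through the hybrid conjugacy, I obtain a mother hedgehog $M\subset B^*$ for $P^*$ with its countable system of pullbacks and periodic chains.  By Lemma \ref{l:exponent}(4), the periodic point $z$ is the limit of a unique periodic chain $\mathbf{Z}=(M,S_1,S_2,\dots)$ of pullbacks of $M$, and by Lemma \ref{l:exponent}(1) the diameters of the $S_n$ tend to zero exponentially fast.

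To finish I combine the shrinking of the $S_n$ with a finite family of auxiliary periodic external rays of $P$ landing at repelling preperiodic points of $P^*$ inside $M$.  Pulling these rays back along the chain $\mathbf{Z}$ produces a sequence of cross-cuts of $K(P)$ through points arbitrarily close to $z$ whose diameters shrink to zero.  Since $I$ is a connected subset of $J(P)$ containing $z$ and disjoint from every external ray other than $R$, it is trapped inside the shrinking regions cut off by these cross-cuts and must therefore collapse to $\{z\}$.

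The main obstacle will be producing cross-cuts of the full filled Julia set $K(P)$, not merely of $B^*$, whose diameters actually shrink to zero.  A priori $I$ might extend into $J(P)\setminus B^*$ through a boundary point of $B^*$, so chain-shrinking inside $B^*$ alone is insufficient; overcoming this requires invoking Ma\~n\'e contraction (Theorem \ref{t:mane}) on pullbacks of the auxiliary external rays and verifying that the chain $\mathbf{Z}$ meets no limit set of a recurrent critical point of $P$ lying outside $B^*$.
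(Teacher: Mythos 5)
There is a genuine gap, and it is located exactly where you flagged your ``main obstacle'': your argument never produces the ingredient that actually forces the impression of the $K(P)$-ray $R$ to be degenerate. Note first that $I$ is an impression relative to the \emph{whole} filled Julia set $K(P)$, not to $B^*$. This already breaks your easy case: local connectivity of $J(Q)$, hence of $J(P|_{B^*})$, at $z$ says nothing about decorations of $K(P)$ attached at or near $z$, so ``$I=\{z\}$ follows easily'' is unjustified. The paper handles that case by observing that when $a$ is attracting or parabolic the polynomial $P$ has no Cremer or Siegel points at all, and then quoting Corollary 7.5.4 of \cite{bfmot12}, which guarantees that every periodic impression of $P$ is degenerate; some such degeneracy criterion for impressions of the \emph{cubic} (degree $d$) polynomial is indispensable, and nothing in your sketch replaces it.

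In the irrationally neutral case the same omission is fatal to your squeezing strategy, and your proposed cross-cuts do not exist as described. You want periodic external rays of $P$ landing at ``repelling preperiodic points of $P^*$ inside $M$'': by Lemma \ref{l:exponent}(4) the only periodic point of $M$ is $a$, so any (pre)periodic point lying in the invariant set $M$ eventually maps to $a$ and is not repelling, and points of a hedgehog need not be accessible from $\C\sm K(P)$ in any case. The paper's anchors are different: it takes two hedgehog pullbacks $L,R$ attached to an element $S$ of the chain converging to $z$, extends them by \emph{(pre)periodic chains} whose limits $y_l,y_r$ are preperiodic (hence repelling or parabolic, hence landing points of genuine external rays $Y_l,Y_r$ of $P$), and forms the barrier $Z=Y_l\cup\{y_l\}\cup L\cup S\cup R\cup\{y_r\}\cup Y_r$, plus a second nested barrier $Z'$. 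These separate $z$ from $a$, which shows only that the impression of any periodic angle containing $z$ avoids $a$; the conclusion $I=\{z\}$ is then again delivered by Corollary 7.5.4 of \cite{bfmot12}, not by any shrinking-diameter estimate. Finally, your fallback of invoking Theorem \ref{t:mane} after ``verifying that the chain meets no limit set of a recurrent critical point of $P$ lying outside $B^*$'' imports a hypothesis that Theorem \ref{t:allc} does not contain: the statement allows recurrent critical points outside $B^*$ (and must, since in the application it is used before any recurrence information is available), so a proof conditioned on that verification does not prove the theorem as stated.
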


It is well-known that if the impression of a ray is degenerate then this rays lands
at some point $z$ and the continuum is ready.

%Hopefully, Lex can provide a ref. here.

\begin{proof}
First assume that $a$ is either attracting or parabolic. Then $P$ has no Cremer or Siegel points.
Hence by Corollary 7.5.4 \cite{bfmot12} any periodic impression is degenerate.

%Let $z$ be of period $N$.
The forthcoming argument which involves
chains, thread etc applies to $P|_{B^*}$ as $B^*$ is quadratic-like; we will, therefore, use the same notation as before
despite the fact that $P$ itself is not quadratic.
Suppose now that $P'(a)=e^{2\pi i \ta}$ where $\ta$ is irrational.
We will need the following construction. Let $\bX$ be the chain that converges to $z$.
Let $(\ol{0}, m_1, \dots)=\thr(\bX)=\bx$ be its thread. Choose a pullback $S$ of the mother hedgehog $M_P$ of $P$ that belongs to
$\bX$ assuming that $S\ne M_P,$ $S|ne M'_P$.. Evidently, on the plane there are pullbacks $L, R$ of $M_P$ attached to $S$ from either side of the part
of $\bX$ connecting $S$ and $z$.
%(so that if we draw topological rays $E_l, E_r$ to $L$ and $R$, resp., that avoid $L\cup S\cup R$ then
%the set $E_L\cup L\cup S\cup R\cup E_R$ will separate $z$ from $M_P$).
The choice of
$L$ and $R$ depends on  $\ta$, however, regardless of $\ta$, such $L$ and $R$ exist.
%(moreover, there are a lot of ways to choose them).
Then choose (pre)periodic chains
$\chai_l$ and $\chai_r$ that extend $L$ and $R$ and converge to points $y_l, y_r$, and external rays
$Y_l, Y_r$ of $P$ that land at $y_l, y_r$ and have arguments $\ta_l, \ta_r$, resp. Set
$$Z=Y_l\cup \{y_l\} \cup L\cup S\cup R\cup \{y_r\}\cup Y_r$$
and use it in the proof. Repeat this construction for a different pullback $S'\in \bX$ of $M_P$
assuming that in $\bX$ the set $S'$ is closer to $M_P$ than $S$ and construct a similar set
$Z'$ for $S'$ so that $Z$ separates $Z'$ from $z$.

The set $Z$ divides $\C$ in two open subsets, $W_a\ni a$ and $W_z\ni z$. A ray whose impression contains $z$ must be
contained in $W_z$ or coincide with $Y_l$ or with $Y_r$. Now, the set $Z'$ divides $\C$ in two open subsets, $W'_a\ni a$ and $W'_z\ni z$,
and $W_z'\supset Z\cup W_z$. It follows that the impression of any angle that contains $z$ cannot contain $a$. By Corollary 7.5.4 from \cite{bfmot12}
this impression is degenerate and, hence, coincides with $\{z\}$. This implies that $J(P)$ is locally connected at $z$ as desired.
\end{proof}

The next result is used in dealing with an easier case.

\begin{thm}[\cite{bot21}]\label{t:cutpts}
Let $P$ be a polynomial of any degree with connected filled Julia set $K(P)$.
Let $x\in K(P)$ be a repelling or parabolic periodic point
and all $K(P)$-rays to $x$ form $m$ wedges $W_i$, where $1\le i\le m$.
Moreover, suppose that $x\in Q$ is a cutpoint of order $n$ of an invariant continuum $Q\subset J$.
Then $n\le m$, each wedge $W_i$ intersects $Q$ over a connected (possibly empty) set,
 and every $Q$-ray to $x$ is isotopic rel. $Q$ to a $K(P)$-ray that lands at $x$.
\end{thm}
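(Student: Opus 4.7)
The plan is to exploit the observation that the external rays $R_1,\dots,R_m$ landing at $x$ sit in the basin of infinity $\C\sm K(P)$, while $Q\subset J(P)\subset K(P)$. Hence $Q\cap R_i\subset\{x\}$ for every $i$, and each component of $Q\sm\{x\}$ is a connected subset of $\C\sm(\{x\}\cup R_1\cup\dots\cup R_m)$. Consequently every component $Q_j$ of $Q\sm\{x\}$ is contained in exactly one wedge $W_{\phi(j)}$, and we obtain a well-defined map $\phi\colon\{Q_1,\dots,Q_n\}\to\{W_1,\dots,W_m\}$. If $\phi$ is shown to be injective, then the first conclusion $n\le m$ is immediate, and the second conclusion (that $Q\cap W_i$ is connected or empty) follows too: $Q\cap W_i$ equals $Q_{\phi^{-1}(i)}$ when $i$ is in the image of $\phi$ and is empty otherwise.

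The heart of the argument is therefore the injectivity of $\phi$. Passing to a suitable iterate $P^N$, one may assume $P^N(x)=x$, $P^N(R_i)=R_i$, and $P^N(W_i)=W_i$ for each $i$; moreover $P^N(Q)=Q$ because $Q$ is invariant and connected and the forward image of a continuum through a fixed point is again such a continuum. Suppose, for contradiction, that two distinct components $Q_a,Q_b$ of $Q\sm\{x\}$ both lie in some wedge $W_i$. Both $\ol{Q_a}$ and $\ol{Q_b}$ then contain $x$, because $x$ is a cutpoint of $Q$. I would then pass to a local model at $x$: in the repelling case, use a linearizing coordinate in which $P^N$ is $z\mapsto \la z$ with $|\la|>1$, and in the parabolic case use Fatou coordinates on each attracting petal together with the description of the rays landing at $x$ via the petal decomposition. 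In either case the two sets $Q_a$ and $Q_b$ restricted to a small neighborhood of $x$ are $P^N$-invariant subcontinua of $W_i$ accumulating to $x$; expansion (repelling) or the combinatorics of petals (parabolic) force them to meet at a preimage of $x$ inside $W_i$, contradicting the fact that they are distinct components of $Q\sm\{x\}$.

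With $\phi$ injective, the third conclusion is essentially a compactness/planarity remark. A $Q$-ray to $x$ from the component $Q_j$ is an arc $\gamma$ in $Q\cup\{x\}$ landing at $x$ from within $W_{\phi(j)}$. Since no other component of $Q\sm\{x\}$ meets $W_{\phi(j)}$, the open set $W_{\phi(j)}\sm Q$ contains both a tail of the external ray $R_{\phi(j)}$ and a tail of $\gamma$, and it is simply connected because its complement in the wedge $W_{\phi(j)}$ is $(Q\cap W_{\phi(j)})\cup\{x\}$, a connected set containing $x$. Inside a simply connected planar domain, two curves landing at the same boundary point are isotopic rel. the boundary; the isotopy in this domain extends to an isotopy rel. $Q$.

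The main obstacle is the second paragraph: the dynamical argument that no wedge can contain two distinct components of $Q\sm\{x\}$. The topology alone does not forbid two arcs of a continuum from entering the same wedge with a common endpoint $x$ on its boundary; what rules this out is the interplay between invariance of $Q$, the local expansion (or parabolic normal form) at the periodic point, and the fact that the rays $R_i$ already realize \emph{all} invariant accessibility directions to $x$ from $\C\sm K(P)$. Getting the parabolic case clean, where several rays are attached to each petal and the local dynamics is neutral, is the most delicate point; I expect to handle it by working petal by petal and reducing to the repelling-like situation on the repelling axis between adjacent petals.
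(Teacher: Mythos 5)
This statement is not proved in the paper at all: it is quoted verbatim from \cite{bot21} as an external input, so there is no in-paper argument to compare yours against. Judged on its own merits, your reduction of the first two conclusions to the injectivity of the assignment $\phi$ (component of $Q\sm\{x\}$ $\mapsto$ wedge containing it) is correct and is the natural way to organize the claim, but the proof of that injectivity --- which you yourself flag as the heart --- is where the proposal genuinely fails. First, the localization step is unjustified: forward invariance $P^N(Q)\subset Q$ does not make $Q_a\cap D$ invariant near $x$ in any useful sense, since $P^N(Q_a)$ may run through $x$ and spread over other components and wedges, and the contracting inverse branch at $x$ need not carry $Q$ into $Q$. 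Second, even granting some local invariance, the intended punchline is false as a matter of local dynamics: for $z\mapsto\la z$ with $|\la|>1$ there are uncountably many pairwise disjoint invariant curves (rays, logarithmic spirals) accumulating at the fixed point inside a single sector, so ``expansion forces $Q_a$ and $Q_b$ to meet'' cannot be correct without a global ingredient. What actually rules out two components in one wedge is precisely the dynamical correspondence between accesses to $x$ from $\C\sm Q$ and periodic external rays of $K(P)$ (a Levin--Przytycki/\cite{bfmot12}-type argument): a second component in $W_i$ would create an extra access, hence an extra periodic $K$-ray landing at $x$ strictly inside $W_i$, contradicting that the $R_i$ exhaust all rays at $x$. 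That ingredient is exactly what your sketch omits, and it is also the content of the third conclusion, so it cannot be deferred until after injectivity is settled.

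Your third step has a related flaw: two curves in a simply connected planar domain landing at the same boundary point need not be isotopic rel.\ the complement --- they are only if they land through the same prime end (access). A $Q$-ray and a $K$-ray both landing at $x$ could a priori define different accesses to $x$ from $\C\sm Q$, and showing they do not is again the dynamical statement above, not a ``compactness/planarity remark.'' So the proposal is an honest reduction plus an accurate diagnosis of where the difficulty lies, but the difficult step is not established, and the local-dynamics route you propose for it would not work as stated.
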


We are finally ready to prove Theorem \ref{t:recur}.

\begin{proof}[Proof of Theorem \ref{t:recur}]
Since $\om_2$ is non-recurrent, then by Lemma \ref{l:perio-type} $\Uf$ is of periodic type.
Let $M=\ol{\al\be}$ be its major and $I=(\al, \be)$ be its major hole. Then $\al$ and $\be$ are
$\si_3$-periodic angles each of which is approached from the outside of $I$ by periodic angles
$\al_i\to \al$ and $\be_i\to \be$ whose rays land in $J^*$. These angles
belong to $\Uf$ and correspond (through the map $\tau$ collapsing edges of $\Uf$) to $\si_2$-periodic angles $\al'_i$ and $\be'_i$.
Moreover, ``quadratic'' angles $\al'_i$ and $\be'_j$ converge to the same ``quadratic'' periodic angle $\ga=\tau(M)$.
Then by Theorem \ref{t:allc} the landing points of ``quadratic'' external rays with arguments $\al'_i$ and $\be'_i$
converge to the periodic landing point of the ``quadratic'' ray with argument $\gamma$. Evidently, the same will
happen in $K^*$, i.e. the landing points of $P$-rays with arguments $\al_i$ and $\be_j$ converge to a periodic point in $K^*$.
This point belongs to the impressions of $\al$ and of $\be$.  Hence, by Theorem \ref{t:allc}, they are degenerate and coincide with this point, a contradiction
with the assumption that the rational lamination of $P$ is empty.
\end{proof}

\begin{comment}

\section{Find a mistake}
In this section, we prove the following theorem.

\begin{thm}
  \label{t:isittrue}
  Let $f$ be an immediately renormalizable cubic polynomial.
  Then $f$ has non-empty rational lamination.
\end{thm}

We write $G$ for the Green function of $f$.
Consider two external rays $R_1$, $R_2$ landing in $K^*$.
Define an \emph{external cap} of $K^*$ as the union of the following three arcs:
\begin{enumerate}
  \item a closed segment of $\ol R_1$ connecting the landing point of $R_1$ with some point $z$;
  \item a closed segment of the equipotential $\{G=G(z)\}$ connecting $z$ with some $w\in R_2$;
  \item the closed segment of $\ol R_2$ connecting $w$ with the landing point of $R_2$.
\end{enumerate}
An external cap is an arc that lies in the basin of infinity, except for the endpoints.
Let $\al_1$, $\al_2$ be the arguments of $R_1$, $R_2$, respectively.
The external cap is determined by the three numbers $\al_1$, $t=G(z)$, $\al_2$.
We will write $C(\al_1,t,\al_2)$ for this cap.
The content of an external cap is defined as in Section \ref{s:quadarg}.

%Consider finitely many caps whose contents cover all decorations.
%For example, it suffices to take two caps to cover all decorations.
%We will write $\Sc_0$ for the set consisting of contents of these caps.
%Thus $\Sc_0$ is a finite set whose elements are open topological disks in $\C$.
%Elements of $\Sc_0$ will be referred to as \emph{capped pieces of level $0$} (or level $0$ capped pieces).
%Let $\Sc_n$ be the set of all $f^n$-pullbacks of level $0$ capped pieces.
%Elements of $\Sc_n$ are called \emph{level $n$ capped pieces}.

%Given a point $x$ in some decoration, we will write $CP_n(x)$ for the level $n$ capped piece containing $x$
% (provided that such a capped piece exists).
%If $f^n(x)\notin K^*$ for all $n\ge 0$, then $CP_n(x)$ exist for all $n$.
%In particular, $CP_n(\om_2)$ exist for all $n$.
\end{comment}


\begin{thebibliography}{9999}

%\bibitem[Bea91]{bea91}
%A.F. Beardon, ``Iteration of rational functions: Complex Analytic Dynamical Systems'',
%Springer 2000.

%\bibitem[BCOT18]{bcot18} A. Blokh, A. Cheritat, L. Oversteegen, V.
%   Timorin, \emph{Location of Siegel capture polynomials in parameter
%    spaces}, preprint arXiv:1809.02586 (2018)

\bibitem[BCLOS16]{bclos}
A. Blokh, D. Childers, G. Levin, L.
    Oversteegen, D. Schleicher, \emph{An Extended Fatou-Shishikura
    inequality and wandering branch continua for polynomials}, Advances
    in Mathematics, \textbf{288} (2016), 1121-1174.

%\bibitem[BCO11]{bco11} A. Blokh, C. Curry, L. Oversteegen,
%\emph{Locally connected models for Julia sets}, Advances in
%Mathematics \textbf{226} (2011), 1621--1661.

\bibitem[BFMOT12]{bfmot12} A. Blokh, R. Fokkink, J. Mayer, L. Oversteegen, E.
Tymchatyn, \emph{Fixed point theorems for plane continua with
applications}, Memoirs of the American Mathematical Society,
\textbf{224} (2013), no. 1053.

%\bibitem[BMOV13]{bmov13} A. Blokh, D. Mimbs, L. Oversteegen, K. Valkenburg,
%\emph{Laminations in the language of leaves}, Trans. Amer. Math.
%Soc., \textbf{365} (2013), 5367--5391.

%\bibitem[BO10]{bo10} A. Blokh, L. Oversteegen,
%\emph{Monotone images of Cremer Julia sets},
%Houston Journal of Mathematics, \textbf{36} (2010), 469--476.

\bibitem[BOPT14]{bopt14} A. Blokh, L. Oversteegen, R. Ptacek, V.
    Timorin, \emph{The main cubioid}, Nonlinearity, \textbf{27} (2014),
    1879--1897.

\bibitem[BOPT16]{BOPT}
A. Blokh, L. Oversteegen, R. Ptacek, V. Timorin,
    \emph{Laminations from the main cubioid}, Discrete and Continuous
    Dynamical Systems - Series A, \textbf{36} (2016), 4665--4702.

\bibitem[BOPT16a]{bopt16a} A. Blokh, L. Oversteegen, R. Ptacek, V.
    Timorin, \emph{Quadratic-like dynamics of cubic polynomials},
    Comm. Math. Phys., \textbf{341} (2016), 733--749.

%\bibitem[BOPT18]{bopt14b} A. Blokh, L. Oversteegen, R. Ptacek, V.
%    Timorin, \emph{Complementary components to the Principal Hyperbolic
%    Domain}, Proc. Amer. Math. Soc. \textbf{146} (2018), 4649-4660.

%\bibitem[BOPT17]{bopt17} A. Blokh, L. Oversteegen, V.
%    Timorin, \emph{Non-degenerate locally connected
%    models for plane continua and Julia sets},
%    Discrete and Continuous Dynamical Systems - Series A, \textbf{37} (2017),
%    5781--5795.

\bibitem[BOT16]{bot16} A. Blokh, L. Oversteegen, V. Timorin,
    \emph{Slices of Parameter Space of Cubic Polynomials}, preprint
    arXiv:1609.02240 (2016).

\bibitem[BOT21]{bot21} A. Blokh, L. Oversteegen, V. Timorin,
\emph{Cutpoints of invariant subcontinua of polynomial Julia sets},
preprint arXiv:2010.00419 (2021)

\bibitem[BrHu88]{BrHu} B. Branner, J. Hubbard, \emph{The iteration of
    cubic polynomials, Part I: The global topology of parameter space}, Acta
    Math. \textbf{160} (1988), 143--206.

%\bibitem[BEE13]{bee13} X. Buff, J. \'Ecalle, A. Epstein, ``Limits of
%   degenerate parabolic quadratic rational maps'', Geom. Funct. Anal.
%    \textbf{23} (2013), 42--95.

%\bibitem[BuHe01]{BH} X. Buff, C. Henriksen, \emph{Julia Sets in
%    Parameter Spaces}, Commun. Math. Phys. \textbf{220}(2001), 333--375.

%\bibitem[CG92]{CG}
%L. Carleson, Th. Gamelin, ``Complex Dynamics'', Springer 1992.

%\bibitem[DH8485]{DH} A. Douady, J.H. Hubbard, \emph{\'Etude dynamique
%    des polyn\^omes complex I \& II}, Publ. Math. Orsay (1984--85).

\bibitem[Chi08]{chi08} D. Childers, \emph{Are there critical points on the boundaries of mother hedgehogs?}
in: Holomorphic dynamics and renormalization, Fields Inst. Commun. \textbf{53}, Amer. Math. Soc., Providence, RI,
2008, 75–-87.

\bibitem[DH85]{DH-pl} A. Douady, J.H. Hubbard, \emph{On the dynamics of
    polynomial-like mappings}, Annales scientifiques de l'\'Ecole
    Normale Sup\'erieure, S\'er. 4, \textbf{18} (1985), n$^\circ$ 2, 287--343.

\bibitem[DL21]{dl21} D. Dudko, M. Lyubich, \emph{Uniform a priori bounds for
neutral renormalization}, manuscript (2021)

%\bibitem[EY99]{EY} A. Epstein, M. Yampolsky, \emph{Geography of the
%    cubic connectedness locus: intertwining surgery}, Annales scientifiques de l'\'Ecole
%    Normale Sup\'erieure, S\'er. 4, \textbf{32} (1999), n$^\circ$ 2,
%    151--185.

%\bibitem[Fat20]{fat20} P. Fatou, \emph{Sur les equations functionnelles,}
%Bull. Soc. Mat. France \textbf{48} (1920)

%\bibitem[GM93]{GM} L.R. Goldberg, J. Milnor, \emph{Fixed points of
%    polynomial maps. Part II. Fixed point portraits}, Annales scientifiques de l'\'Ecole
%    Normale Sup\'erieure, S\'er. 4, \textbf{26} (1993), n$^\circ$ 1,
%    51--98.

\bibitem[Kiw00]{Ki} J. Kiwi, \emph{Non-accessible critical points of
    Cremer polynomials}, Ergodic Theory and Dynamical Systems,
    \textbf{20} (2000), 1391--1403.

%\bibitem[Kiw02]{kiwi02} J. Kiwi, \emph{Wandering orbit portraits}, Trans.
%      Amer. Math. Soc. \textbf{254} (2002), 1473--1485.

%\bibitem[Kiw04]{kiwi97} J. Kiwi, \emph{$\mathbb R$eal laminations and the
%topological dynamics of complex polynomials}, Advances in
%Mathematics, \textbf{184} (2004), 207--267.

\bibitem[LT90]{lt90} A. Lelek, E. D. Tymchatyn, \emph{Pseudo-confluent mappings
and a classification of continua}, Canadian Journal of Mathematics,
\textbf{27} (1975), 1336--1348.

%\bibitem[L]{Lyu}
%M. Lyubich, ``Six Lectures on Real and Complex Dynamics''.

\bibitem[LP96]{lp96} G. Levin, F. Przytycki, \emph{External rays to
    periodic points}, Israel J. Math. \textbf{94} (1996), 29--57.

%\bibitem[Lyu83]{lyu83} M. Lyubich, \emph{Some typical properties of the
%    dynamics of rational mappings}, Russian Math. Surveys \textbf{38}
%    (1983), n$^\circ$ 5, 154--155.

\bibitem[Man93]{man93} R. Ma\~n\'e, \emph{On a theorem of Fatou}, Bol.
    Soc. Brasil. Mat. (N.S.) \textbf{24} (1993), 1--11.

\bibitem[McM94]{mcm94} C. McMullen, ``Complex dynamics and
    renormalization'', Annals of Mathematics Studies \textbf{135},
    Princeton University Press, Princeton, NJ (1994)

%\bibitem[Mil93]{miln93}
%J. Milnor, \emph{Geometry and Dynamics of Quadratic Rational Maps},
%Experimental Math., \textbf{2} (1993), 37--83.

%\bibitem[Mil00]{M00} J. Milnor, \emph{Periodic orbits, external rays,
%    and the Mandelbrot set: an expository account}, Ast\'erisque
%    \textbf{261} (2000), 277--333.

%\bibitem[Mil06]{M}
%J. Milnor, ``Holomorphic dynamics'', 3rd Ed., Matrix Press 2006

%\bibitem[MSS83]{MSS} R. Ma\~ne, P. Sud, D. Sullivan, \emph{On the
%    dynamics of rational maps}, Ann. Sci. \'Ecole Norm. Sup. (4) \textbf{16}
%    (1983), n$^\circ$ 2, 193--217.

\bibitem[Roe06]{roe06} P. Roesch, \emph{Hyperbolic components of
    polynomials with a fixed critical point of maximal order}, Annales
    scientifiques de l'\'Ecole Normale Sup\'erieure, S\'er. 4, \textbf{40} (2007), n$^\circ$ 6,
    901--949.

%\bibitem[RY08]{ry08} P. Roesch, Y. Yin, \emph{The boundary of bounded polynomial Fatou
%components}, Comptes Rendus Mathematique \textbf{346} (2008),
%877--880.

%\bibitem[Rog92]{rog92} J. T. Rogers, Jr., \emph{Singularities in the
%    boundaries of local Siegel disks}, Erg. Th. and Dynam. Sys.
%    \textbf{12} (1992), 803--821.

%\bibitem[Shi87]{shi87} M. Shishikura, \emph{On the quasiconformal surgery of rational functions},
%Ann. Sci. Ecole Norm. Sup., \textbf{20} (1987), 1--29.

%\bibitem[Sul85]{sul85} D. Sullivan, \emph{Quasiconformal homeomorphisms
%    and dynamics. I. Solution of the Fatou-Julia problem on wandering
%    domains}, Ann. of Math. (2) \textbf{122} (1985), 401--418.

\bibitem[Z99]{Z} S. Zakeri, \emph{Dynamics of Cubic Siegel
    Polynomials}, Commun. Math. Phys. \textbf{206}(1999), 185--233.

\end{thebibliography}
\end{document}